\address{Heng Xie, Fachgruppe Mathematik and informatik, Bergische Universit\"{a}t Wuppertal,  Gau{\ss}stra{\ss}e 20, 42119 Wuppertal, Germany}
\email{heng.xie@math.uni-wuppertal.de}
\numberwithin{equation}{section}
\theoremstyle{plain}
\newtheorem{theo}{Theorem}[section]
\theoremstyle{theorem}
\newtheorem{lem}[theo]{Lemma}
\newtheorem{Prop}[theo]{\bf Proposition}
\newtheorem{Coro}[theo]{\bf Corollary}
\theoremstyle{definition}
\newtheorem{remark}[theo]{Remark}
\newtheorem{Def}[theo]{Definition}
\newcommand{\A}{\mathbb{A}}
\newcommand{\C}{\mathbb{C}}
\newcommand{\D}{\mathcal{D}}
\newcommand{\SO}{\mathcal{O}}
\newcommand{\Z}{\mathbb{Z}}
\newcommand{\PS}{\mathbb{P}}
\newcommand{\id}{\textnormal{id}}
\newcommand{\pt}{\textnormal{pt}}
\newcommand{\op}{\textnormal{op}}
\newcommand{\Hom}{\textnormal{Hom}}
\newcommand{\quis}{\textnormal{quis}}
\newcommand{\sPerf}{\textnormal{Ch}^b}
\newcommand{\hMod}{\textnormal{-Mod}}
\newcommand{\can}{\textnormal{can}}
\newcommand{\LC}{\mathcal{L}}
\newcommand{\GC}{\mathcal{G}}
\newcommand{\Spec}{\textnormal{Spec}}
\newcommand{\Ext}{\mathcal{E}\textnormal{xt}}
\newcommand{\ext}{\textnormal{Ext}}
\newcommand{\ckr}{\textnormal{coker}}
\newcommand{\Sm}{\textnormal{Sm}}
\newcommand{\sHom}{\mathcal{H}\textnormal{om}}
\newcommand{\M}{\mathcal{M}}
\newcommand{\Q}{\mathcal{Q}}
\newcommand{\Tot}{\textnormal{Tot}}
\newcommand{\VB}{\textnormal{VB}}
\newcommand{\Ch}{\textnormal{Ch}}
\newcommand{\I}{\mathcal{I}}
\newcolumntype{M}[1]{>{\centering\arraybackslash}m{#1}}
\newcolumntype{N}{@{}m{0pt}@{}}
\newcommand{\xdownarrow}[1]{%
	{\left\downarrow\vbox to #1{}\right.\kern-\nulldelimiterspace}
}
\newcommand{\xuparrow}[1]{%
	{\left \uparrow\vbox to #1{}\right.\kern-\nulldelimiterspace}
}
\newcommand{\extp}{\@ifnextchar^\@extp{\@extp^{\,}}}
\def\@extp^#1{\mathop{\bigwedge\nolimits^{\!#1}}}
\newcommand{\subjectclass}[2][ ]{%
	\let\@oldtitle\@title%
	\gdef\@title{\@oldtitle\footnotetext{#1 \emph{MSC:} #2}}%
}
\title{A Transfer morphism for Hermitian $K$-theory of schemes with involution}
\author{ Heng Xie}
\begin{document}

	\begin{abstract} In this paper, we consider the Hermitian $K$-theory of schemes with involution, for which we construct a transfer morphism and prove a version of the d\'{e}vissage theorem. This theorem is then used to compute the Hermitian $K$-theory of $\mathbb{P}^1$ with involution given by $[X:Y] \mapsto [Y:X]$. We also prove the $C_2$-equivariant $\A^1$-invariance of Hermitian $K$-theory, which confirms the representability of Hermitian $K$-theory in the $C_2$-equivariant motivic  homotopy category  of Heller, Krishna and \O stv\ae r \cite{HKO14}.  \end{abstract}
	
	\maketitle
	
	\section{Introduction}
	In the 1970s, researchers found that to understand quadratic forms, it was helpful to study Witt groups of function fields of algebraic varieties, which has led to substantial progress in quadratic form theory (see \cite{Scha85}, \cite{Lam05}, \cite{KS80} and \cite{Knu91}).\ However, for many function fields, Witt groups are very difficult to understand. Instead of computing the Witt groups of function fields of algebraic varieties, Knebusch \cite{Kne77} proposed to study the Witt groups of the algebraic varieties themselves, which would certainly reveal some information about their function fields counterparts. In the introduction of \textit{loc. cit.}, Knebusch suggested developing a version of Witt groups of schemes with involution, which would not only enlarge the theory of trivial involution but also provides new insights into Topology. For instance, the $L$-theory of the Laurent polynomial ring $k[T,T^{-1}]$ with the non-trivial involution $T \mapsto T^{-1}$ turned out to be very useful in understanding geometric manifolds \cite{Ran98}, and Witt groups can be identified with  $L$-theory if two is invertible.  
	
	More generally, Witt groups fit into the framework of Hermitian $K$-theory, \cite{Ba73}, \cite{Kar80} and \cite{Sch17}.  More precisely, the negative homotopy groups of the Hermitian $K$-theory spectrum are Witt groups, cf. \cite[Proposition 6.3]{Sch17}. It is also worth mentioning that Hermitian $K$-theory has been successfully applied to solve several problems in the classification of vector bundles and the theory of Euler classes (cf. \cite{AF14a}, \cite{AF14b} and \cite{FS09}). In light of this, I decided to develop the current paper within the framework of the Hermitian $K$-theory of dg categories of Schlichting \cite{Sch17}. 
	
	The simplest example of Witt groups of schemes with non-trivial involution is     $W(\Spec(\mathbb{C}),\sigma)$  where $\sigma$ is complex conjugation (here we consider $\Spec (\mathbb{C})$ over $\Spec(\Z[\frac{1}{2}])$).\ We have $W(\Spec(\mathbb{C}),\sigma) \cong \mathbb{Z}$ by taking the rank of positive definite diagonal Hermitian forms over $\C$ (cf. \cite[I.10.5]{Knu91}). This computation  is different from $W(\Spec(\mathbb{C})) \cong \mathbb{Z}/2\mathbb{Z}$ in which case every rank two quadratic form is hyperbolic.

	On the one hand, it is known that the topological Hermitian $K$-theory of spaces with involution is equivalent to Aityah's $KR$-theory (\cite{At66}).\ On the other hand, the Hermitian $K$-theory of schemes with involution is a natural lifting of the topological Hermitian $K$-theory to the algebraic world. In light of this, the Hermitian $K$-theory of schemes with involution could be regarded as a version of  $KR$-theory in algebraic geometry. By working with Hermitian $K$-theory of schemes with involution, Hu, Kriz and Ormsby \cite{HKO11} managed to prove the homotopy limit problem for Hermitian $K$-theory.
	
	In this paper, we generalize Gille's transfer morphism on Witt groups \cite{Gil03} and Schlichting's transfer morphism on Grothendieck-Witt groups \cite[Theorem 9.19]{Sch17} to non-trivial involution. More precisely, in Theorem \ref{thm:transfer} we prove the following:
	\begin{theo}
		Let $(X,\sigma_X)$ and $(Z, \sigma_Z)$ be schemes with involution and with $\frac{1}{2}$ in their global sections.  Suppose that $(X,\sigma_X)$ has a dualizing complex with involution $(I,\sigma_I)$ (cf. Definition \ref{Dwin}).\ If $ \pi: Z \rightarrow X$ is  a finite morphism of schemes with involution, then the direct image functor $\pi_*: \Ch^b_c(\Q(Z)) \rightarrow \Ch^b_c(\Q(X))  $ induces a map of spectra
		\[T_{Z/X} : GW^{[i]} (Z,\sigma_Z, (\pi^{\flat} I^\bullet, \sigma_{\pi^{\flat}I}) ) \rightarrow GW^{[i]}(X,\sigma_X, (I^\bullet, \sigma_I) ). \] 
	\end{theo}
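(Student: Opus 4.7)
The plan is to realize $\pi_*$ as a duality-preserving dg-form-functor between dg categories with involutive duality, and then invoke the functoriality of Schlichting's Grothendieck--Witt spectrum construction from \cite{Sch17}. Since $\pi$ is finite, $\pi_*$ is exact and preserves coherence and boundedness, hence restricts to a dg-functor $\Ch^b_c(\Q(Z)) \to \Ch^b_c(\Q(X))$. On the target side I already have the duality $\sHom_X(-, I^\bullet)$ with its involution $\sigma_I$; on the source side I would use $\pi^\flat I^\bullet$ as dualizing complex, where $\pi^\flat(-) := \pi^{-1}\sHom_{\SO_X}(\pi_*\SO_Z, -)$ in the dg-enhanced sense, which is classically known to yield a dualizing complex on $Z$.

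The analytic heart of the argument is the Grothendieck--Serre duality isomorphism
\[ \pi_*\sHom_Z(F, \pi^\flat I^\bullet) \xrightarrow{\sim} \sHom_X(\pi_* F, I^\bullet), \]
which exhibits $\pi_*$ as duality-preserving in the sense of Schlichting's transfer setup, generalising the trivial-involution case of \cite[Theorem 9.19]{Sch17}. To handle the involutions, I would define $\sigma_{\pi^\flat I}$ by transporting $\sigma_I$ through $\pi^\flat$ using the canonical comparison $\pi_*\sigma_Z^* \cong \sigma_X^*\pi_*$ coming from the assumption that $\pi$ is a morphism of schemes with involution; one then verifies that $(\pi^\flat I^\bullet, \sigma_{\pi^\flat I})$ is a dualizing complex with involution in the sense of Definition \ref{Dwin}. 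The final structural check is that the displayed duality isomorphism commutes with the $C_2$-actions on both sides induced by $\sigma_Z, \sigma_X, \sigma_I$ and $\sigma_{\pi^\flat I}$, which packages $\pi_*$ as a form functor between dg categories with involutive duality.

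Once the dg-form-functor structure is in place, the map $T_{Z/X}$ of $GW^{[i]}$-spectra is produced by functoriality of Schlichting's construction. The main obstacle will be the equivariance of the duality isomorphism: although the bare Grothendieck--Serre isomorphism is classical, tracking $\sigma_{\pi^\flat I}$ through several adjunctions and verifying the coherence of the $C_2$-action requires careful bookkeeping. A clean strategy is to \emph{define} $\sigma_{\pi^\flat I}$ by means of the duality isomorphism itself, so that equivariance holds by construction, and then independently check that this choice satisfies the involutive dualizing-complex axioms. Modulo this verification, the remaining steps are essentially the equivariant counterparts of Schlichting's original construction, and the passage to spectra is formal.
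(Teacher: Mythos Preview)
Your proposal is correct and essentially coincides with the paper's proof: the paper establishes (Lemma~\ref{pushfo}) that $\pi_*$, together with the Grothendieck--Serre-type compatibility isomorphism $\eta\colon \pi_*\sHom_{\SO_Z}(\sigma_{Z*}(-),\pi^\flat I^\bullet)\to \sHom_{\SO_X}(\sigma_{X*}\pi_*(-),I^\bullet)$, is a duality-preserving dg functor between the relevant dg categories with weak equivalences and duality, and then invokes the functoriality of Schlichting's $GW^{[i]}$-spectrum. The only refinement is that in this framework the involutions are already absorbed into the duality functors $\#^{I}_{\sigma}$ and $\#^{\pi^\flat I}_{\sigma_Z}$, so what you describe as ``$C_2$-equivariance'' of $\eta$ is exactly the form-functor axiom---compatibility of $\eta$ with the double-dual identifications $\can^{I}_{\sigma_X}$ and $\can^{\pi^\flat I}_{\sigma_Z}$---which the paper verifies by an explicit diagram chase (Figure~\ref{fig:dulaity}); the paper also fixes $\sigma_{\pi^\flat I}$ explicitly in Section~\ref{sec:notation} rather than defining it via the duality isomorphism as you suggest.
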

	In light of the work of Balmer-Walter \cite{BW02} and Gille \cite{Gil07a}, we also prove a version of the d\'{e}vissage theorem for Hermitian $K$-theory of schemes with involution. The following result is proved in Theorem \ref{thm:devissage}.
	\begin{theo}\label{thm:devissage}
		Let $(X,\sigma_X)$ and $(Z, \sigma_Z)$ be schemes with involution and  with $\frac{1}{2}$ in their global sections. Suppose that  $(X,\sigma_X)$ has a minimal dualizing complex with involution $(I,\sigma_I)$  (cf. Definition \ref{Dwin}).\ If $ \pi: Z \hookrightarrow X$ is a closed immersion which is invariant under involutions, then the direct image functor $\pi_*: \Ch^b_c(\Q(Z)) \rightarrow \Ch^b_{c,Z}(\Q(X))  $ induces an equivalence of spectra
		\[ D_{Z/X} : GW^{[i]} (Z,\sigma_Z, (\pi^{\flat} I^\bullet, \sigma_{\pi^{\flat}I}) ) \rightarrow GW^{[i]}_Z(X,\sigma_X, (I^\bullet, \sigma_I) ). \] 		
	\end{theo}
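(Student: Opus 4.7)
The plan is to upgrade the classical (non-equivariant) d\'{e}vissage theorem of Balmer--Walter and Gille to the setting with involutions, by showing that the pushforward $\pi_*$ defines a Morita equivalence of dg categories equipped with duality \emph{and} involution, and then invoking the invariance principle for Grothendieck--Witt spectra under such equivalences.

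First, I would recall that for a closed immersion $\pi: Z \hookrightarrow X$, the derived pushforward induces an equivalence at the level of derived categories $\pi_*: \Ch^b_c(\Q(Z)) \to \Ch^b_{c,Z}(\Q(X))$, with essential image precisely the complexes supported on $Z$. This is the non-equivariant input from Balmer--Walter \cite{BW02} and Gille \cite{Gil07a}. Next, to lift this to an equivalence of \emph{dg categories with duality}, I would use that $\pi^\flat$ is right adjoint to $\pi_*$ on coherent complexes; since $I^\bullet$ is a minimal dualizing complex on $X$ and $\pi$ is finite, the complex $\pi^\flat I^\bullet$ is itself a minimal dualizing complex on $Z$. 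The projection formula
\[
\pi_* \sHom_Z(F^\bullet, \pi^\flat I^\bullet) \;\simeq\; \sHom_X(\pi_* F^\bullet, I^\bullet)
\]
then says precisely that $\pi_*$ intertwines the two dualities on the subcategory of $Z$-supported complexes.

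The main new content is the equivariance. Since $\pi \circ \sigma_Z = \sigma_X \circ \pi$, both $\pi_*$ and $\pi^\flat$ commute with the pullbacks $\sigma_X^*$ and $\sigma_Z^*$. Transporting $\sigma_I: \sigma_X^* I^\bullet \simeq I^\bullet$ through the adjunction yields a canonical isomorphism $\sigma_{\pi^\flat I}: \sigma_Z^* \pi^\flat I^\bullet \simeq \pi^\flat I^\bullet$; minimality of the dualizing complex ensures that the cocycle condition $\sigma_{\pi^\flat I}^2 = \id$ is automatic. Consequently $\pi_*$ upgrades to a Morita equivalence of dg categories carrying both duality and involution in the sense developed earlier in the paper, and the invariance principle of Schlichting \cite{Sch17} (extended to the involution setting as in the earlier sections of this paper) then produces the claimed equivalence $D_{Z/X}$ on Grothendieck--Witt spectra.

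I expect the main obstacle to be the last of these three steps: verifying that $\sigma_{\pi^\flat I}$ transported through the adjunction truly gives an involution, and that the resulting structure is homotopy-coherent with the $\pi^\flat I^\bullet$-duality on $\Ch^b_c(\Q(Z))$. This is precisely where the minimality hypothesis on $(I^\bullet, \sigma_I)$ pays off: it removes the ambiguity in choosing the transported involution and forces the required coherence, reducing the $C_2$-equivariant statement to the non-equivariant d\'{e}vissage of Balmer--Walter and Gille.
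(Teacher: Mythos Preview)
Your proposal contains a fundamental error in its first step: the functor $\pi_*: \D^b_c(\Q(Z)) \to \D^b_{c,Z}(\Q(X))$ is \emph{not} an equivalence of triangulated categories, nor is the underlying dg functor a Morita equivalence. The pushforward $\pi_*$ is fully faithful, but its essential image consists only of complexes whose cohomology sheaves are honest $\SO_Z$-modules (annihilated by the ideal sheaf $\mathcal{I}_Z$), whereas $\D^b_{c,Z}(\Q(X))$ contains complexes whose cohomology is merely annihilated by some \emph{power} of $\mathcal{I}_Z$. Concretely, if $Z=\Spec(k)\hookrightarrow X=\Spec(k[t])$ is the origin, then $k[t]/(t^2)$ lies in $\D^b_{c,Z}(\Q(X))$ but not in the essential image of $\pi_*$. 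This is precisely why d\'evissage is a nontrivial theorem rather than a formal consequence of invariance under equivalences; the references \cite{BW02} and \cite{Gil07a} do not assert any such categorical equivalence, but rather prove directly that the induced map on Witt groups is an isomorphism despite $\pi_*$ failing to be essentially surjective.

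The paper's actual argument proceeds quite differently. One first proves the statement at the level of Witt groups (Theorem~\ref{DevisW}) by filtering both $\D^b_c(\Q(Z))$ and $\D^b_{c,Z}(\Q(X))$ by codimension of support and comparing the associated graded pieces. Localization identifies these graded pieces with products over points $z\in Z$ of local categories $\D^b_{\mathfrak{m}}(\SO_{Z,z})$ and $\D^b_{\mathfrak{m}}(\SO_{X,z})$, and the local d\'evissage (Theorem~\ref{thm:devissagelocal} and Lemma~\ref{localcase}) shows that both have the same Witt groups as the residue field --- this is where the real work happens, using the finite-length filtration and the classical result \cite[Proposition 5.1]{BW02} (ultimately \cite{QSS79}) that the inclusion of $k$-vector spaces into finite-length $R$-modules induces an isomorphism on Witt groups. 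An additional subtlety in the involutive setting is that points with $z\neq\sigma(z)$ contribute trivially, by an additivity argument. Finally, the passage from Witt groups to $GW$-spectra is not by any invariance principle but by Karoubi induction \cite[Lemma 6.4]{Sch17}, which bootstraps an isomorphism on negative homotopy groups (Witt groups) to an equivalence of spectra. Your proposed route via Morita invariance cannot be salvaged without first establishing something equivalent to this filtration argument.
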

	
	Note that a Gersten-Witt complex for Witt groups of schemes with involution was constructed by Gille \cite{Gil09}, but the above formula seems not to appear anywhere in the literature. If $X$ and $Z$ are both regular, we have the following more precise formulation of the d\'{e}vissage theorem (cf. Theorem \ref{thm:Devissage}).
	\begin{theo}[D\'evissage]Let $(X,\sigma_X)$ be a regular scheme with involution.  Let $(\LC,\sigma_\LC)$ be a dualizing coefficient with $\LC$ a locally free $\SO_X$-module of rank one. If $Z$ is a regular scheme regularly embedded in $X$ of codimension $d$ which is invariant under $\sigma$, then there is an equivalence of spectra
		\[ D_{Z/X, \mathcal{L}} : GW^{[i-d]} (Z,\sigma_Z, (\omega_{Z/X} \otimes_{\SO_{X}} \mathcal{L}, \sigma_{\omega_{Z/X}} \otimes  \sigma_{\mathcal{L} }) )  \longrightarrow GW^{[i]}_Z(X,\sigma_X, (\mathcal{L}, \sigma_{\mathcal{L}}) ). \] 
	\end{theo}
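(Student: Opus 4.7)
The plan is to deduce this sharper statement from Theorem~\ref{thm:devissage} by taking the line bundle $\LC$ itself as a minimal dualizing complex on the regular scheme $X$, computing $\pi^{\flat}\LC$ explicitly for the regular closed immersion, and absorbing the resulting cohomological shift into a reindexing of the Grothendieck--Witt groups.

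First, since $X$ is regular, the structure sheaf $\SO_X$ is a dualizing complex concentrated in a single degree, and tensoring by a line bundle preserves this property; thus $(\LC,\sigma_\LC)$ qualifies as a minimal dualizing coefficient with involution in the sense of Definition~\ref{Dwin}. Second, for a regular closed immersion $\pi: Z\hookrightarrow X$ of codimension $d$, the shriek functor admits the standard identification
\[
\pi^{\flat}\LC \;\cong\; \omega_{Z/X}\otimes_{\SO_Z}\pi^{*}\LC\,[-d],
\]
where $\omega_{Z/X}=\det(\mathcal{N}_{Z/X})$ is the relative canonical line bundle. This arises from the Koszul resolution of $\pi_{*}\SO_Z$ by $\bigwedge^{\bullet}(\mathcal{I}/\mathcal{I}^{2})$, where $\mathcal{I}$ is the ideal sheaf of $Z$ in $X$. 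Since $Z$ is stable under $\sigma_X$, the involution on $X$ restricts to an involution on $\mathcal{I}/\mathcal{I}^{2}$ and hence on $\omega_{Z/X}$, and the displayed isomorphism is equivariant with respect to the tensor product $\sigma_{\omega_{Z/X}}\otimes \sigma_\LC$.

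Third, I would invoke Theorem~\ref{thm:devissage} for the dualizing coefficient $(\LC,\sigma_\LC)$ to obtain an equivalence
\[
GW^{[i]}(Z,\sigma_Z,(\pi^{\flat}\LC,\sigma_{\pi^{\flat}\LC})) \;\xrightarrow{\sim}\; GW^{[i]}_{Z}(X,\sigma_X,(\LC,\sigma_\LC)),
\]
and then apply the shift isomorphism for Hermitian $K$-theory spectra
\[
GW^{[i]}(Z,\sigma_Z,(M[-d],\sigma)) \;\cong\; GW^{[i-d]}(Z,\sigma_Z,(M,\sigma)),
\]
coming from the translation functor on dg categories with duality (cf.~\cite{Sch17}), to rewrite the source as $GW^{[i-d]}(Z,\sigma_Z,(\omega_{Z/X}\otimes_{\SO_X}\LC,\, \sigma_{\omega_{Z/X}}\otimes \sigma_\LC))$. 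Composing these two equivalences produces the desired map $D_{Z/X,\LC}$.

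The principal obstacle is tracking the involutions through the above identifications. In particular, checking that the Koszul-based formula for $\pi^{\flat}\LC$ is genuinely $\sigma$-equivariant is best done at the level of an explicit model in $\Ch^b_c(\Q(X))$ rather than only in the derived category, and one must verify that the $[-d]$ shift isomorphism on $GW$ intertwines the natural involution $\sigma_{\pi^{\flat}\LC}$ with $\sigma_{\omega_{Z/X}}\otimes \sigma_\LC$ up to the appropriate signs built into the translation functor. Once this equivariant bookkeeping is carried out consistently, the result follows mechanically by composing the isomorphisms above.
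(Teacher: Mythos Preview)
Your overall strategy---reduce to Theorem~\ref{thm:devissage}, identify $\pi^\flat$ of the dualizing object with $\omega_{Z/X}\otimes\LC[-d]$, and absorb the shift---is the same as the paper's. However, there is a genuine gap in your first step: $\LC$ is \emph{not} a dualizing complex in the sense used here, let alone a minimal one. In this paper a dualizing complex is by definition a bounded complex of \emph{injective} $\SO_X$-modules (see the definition preceding Definition~\ref{Dwin}), and Theorem~\ref{thm:devissage} is stated only for a \emph{minimal} such complex. Correspondingly, $\pi^\flat$ is defined as the underived $\bar\pi^*\sHom_{\SO_X}(\pi_*\SO_Z,-)$, so applying it directly to the single sheaf $\LC$ yields $\sHom_{\SO_X}(\pi_*\SO_Z,\LC)$, which for a regular immersion of codimension $d\ge 1$ is zero, not $\omega_{Z/X}\otimes\LC[-d]$. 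Thus neither the hypothesis of Theorem~\ref{thm:devissage} nor your Koszul identification of $\pi^\flat\LC$ is available as stated.

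The paper's remedy is exactly the missing step: by Lemma~\ref{lem:goe-dualizingcomplex} one first replaces $(\LC,\sigma_\LC)$ by a minimal injective resolution $(\I^\bullet,\sigma_\I)$ \emph{with involution}, applies Theorem~\ref{thm:devissage} to $\I^\bullet$, and then constructs an equivariant quasi-isomorphism $\omega_{Z/X}\otimes\LC[-d]\to\pi^\flat\I^\bullet$. The bulk of the work is then precisely the ``equivariant bookkeeping'' you anticipate: the fundamental local isomorphism $\Ext^d_{\SO_X}(\SO_Z,\LC)\cong\omega_{Z/X}\otimes\LC$ is checked to intertwine the involutions affine-locally (on opens $U$ and $\sigma_X(U)$ that need not themselves be invariant), and the resulting derived-category identification is upgraded to an honest map of complexes using that $\pi^\flat\I^\bullet$ consists of injectives together with a truncation argument. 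Your sketch correctly flags this difficulty but does not supply it, and it cannot be bypassed by treating $\LC$ as its own resolution.
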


	It turns out that if the involution is non-trivial, the canonical double dual identification has a certain sign varying according to the data of dualizing coefficients (cf.\ Definition \ref{D}). 
	This sign is not important if the involution is trivial, but it is crucial if the involution is non-trivial. As an application, we use the d\'{e}vissage theorem to obtain the following result (cf. Theorem \ref{P1switch}). 
	\begin{theo} Let $S$ be a regular scheme with $\frac{1}{2} \in \SO_X$. Then, we have an equivalence of spectra
		\[ GW^{[i]} (\PS^1_S, \tau_{\PS^1_S}) \cong GW^{[i]} (S) \oplus GW^{[i+1]}(S) \]
		where $\tau_{\PS^1_S}  : \PS^1_S \rightarrow \PS^1_S : [x: y] \mapsto [y:x]$.\ In particular, we have the following isomorphism on Witt groups
		\[W^{i}(\PS^1_S, \tau_{\PS^1_S}) \cong W^{i} (S) \oplus W^{i+1}(S). \]
	\end{theo}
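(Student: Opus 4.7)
The plan is to apply the dévissage theorem stated above at a single $\tau$-fixed point of $\PS^1_S$ and use the $C_2$-equivariant $\A^1$-invariance of Hermitian $K$-theory on the complement, then assemble the pieces via the localization long exact sequence. Since $\frac{1}{2}\in \SO_S$, the $\tau$-fixed locus on $\PS^1_S$ consists of the two disjoint $S$-points $[1:1]$ and $[1:-1]$. I would take the closed immersion $Z := \{[1:1]\} \hookrightarrow \PS^1_S$, which is regular of codimension $1$ and $\tau$-invariant; the open complement $U = \PS^1_S\setminus Z$ inherits the involution and, via an affine change of coordinate, is identified with $\A^1_S = \Spec(\SO_S[v])$ carrying the involution $v\mapsto -v$, whose origin corresponds to the remaining fixed point $[1:-1]$.

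For the open piece, the $C_2$-equivariant $\A^1$-invariance applied to the inclusion of the fixed origin yields $GW^{[i]}(U,\tau)\cong GW^{[i]}(S)$ with trivial involution on $S$. For the closed piece, the dévissage theorem with dualizing coefficient $(\SO_{\PS^1_S},\id)$ gives
\[
GW^{[i]}_Z(\PS^1_S,\tau,\SO) \;\cong\; GW^{[i-1]}\!\bigl(S,\id,(\omega_{Z/\PS^1_S},\sigma_{\omega_{Z/\PS^1_S}})\bigr).
\]
In a local parameter $w$ at $[1:1]$ one computes $\tau^* w = -w/(1+w) = -w + O(w^2)$, so $\tau$ acts by $-1$ on the normal line and the induced involution on the dualizing line $\omega_{Z/\PS^1_S}$ is multiplication by $-1$. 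Using the standard identification between sign-twisted Hermitian $K$-theory and the two-fold index shift, this right-hand side becomes $GW^{[i+1]}(S)$.

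Assembling these into the localization long exact sequence yields
\[
\cdots \to GW^{[i+1]}(S)\to GW^{[i]}(\PS^1_S,\tau)\to GW^{[i]}(S)\to GW^{[i+2]}(S)\to\cdots.
\]
To split it I would use the structure morphism $p\colon \PS^1_S\to \Spec(S)$: pulling back along $p$ and then restricting to $U$, which corresponds to evaluating at the fixed point $[1:-1]$, recovers the identity on $GW^{[i]}(S)$, so the restriction map $GW^{[i]}(\PS^1_S,\tau)\to GW^{[i]}(U,\tau)$ admits a section. All connecting maps then vanish and the sequence splits into short exact sequences, producing the desired equivalence $GW^{[i]}(\PS^1_S,\tau)\cong GW^{[i]}(S)\oplus GW^{[i+1]}(S)$; the Witt-group statement follows by taking negative homotopy groups.

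The hard part will be the sign computation: I must pin down the involution on $\omega_{Z/\PS^1_S}$ and track it faithfully through the dévissage equivalence, so that the minus sign really converts into the two-fold shift $GW^{[i-1]}\rightsquigarrow GW^{[i+1]}$ rather than being absorbed or miscounted. This is precisely the sign phenomenon emphasized after Definition \ref{D} in the introduction, and it is what makes the answer feature $GW^{[i+1]}$ instead of the $GW^{[i-1]}$ that the non-equivariant projective bundle formula would produce.
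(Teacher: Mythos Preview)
Your proposal is correct and follows the paper's proof essentially line for line: localize at the $\tau$-fixed point $[1:1]$, apply d\'evissage with the sign computation on the conormal line (the paper does it via $X-Y\mapsto Y-X$, you via $\tau^*w=-w+O(w^2)$, same conclusion), identify the complement with $(\A^1_S,\, v\mapsto -v)$, and split the localization sequence via pullback along the structure map $q:\PS^1_S\to S$. One point to tighten: the $C_2$-equivariant $\A^1$-invariance theorem (Theorem~\ref{thm:c2a1invariance}) is stated only for the \emph{trivial} action on the $\A^1$-fiber, so it does not apply directly to $(\A^1_S,\, v\mapsto -v)$; the paper treats this case separately in Lemma~\ref{A1t-t} via the equivariant multiplication homotopy $(t',t)\mapsto t't$ (trivial action on the first factor, sign action on the second), and you should invoke that argument explicitly rather than absorbing it into the phrase ``$C_2$-equivariant $\A^1$-invariance''.
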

	It is well-known that $W^{i}(\PS^1_S) \cong W^{i} (S) \oplus W^{i-1}(S)$ if the involution on $\PS^1_S$ is trivial (cf. \cite{Ne09} and \cite{Wal03}). By our d\'{e}vissage theorem, the involution $\sigma: \PS^1_S \rightarrow \PS^1_S : [x: y] \mapsto [y:x]$ induces a sign $(-1)$ on the double dual identification, and we have 
	\[W^{i}(\PS^1_S, \sigma) \cong W^{i} (S) \oplus W^{i-1}(S, - \can)\]
	but $W^{i-1}(S, - \can) \cong W^{i+1}(S)$. 
	
	
	We also use the d\'{e}vissage to prove the following $C_2$-equivariant $\A^1$-invariance (cf. Theorem \ref{thm:c2a1invariance}). 
	
	\begin{theo}\label{c2a1i} Let $S$ be a regular scheme with involution $\sigma_S$ and with $\frac{1}{2} \in \SO_S$. Let $\sigma_{\A^1_S}: \A^1_S \rightarrow \A^1_S$ be the involution on $\A^1_S$ with the indeterminant fixed by $\sigma_{\A^1_S}$ and such that the following diagram commutes.
		\[ \begin{CD}
		\A^1_S @>\sigma_{\A^1_S}>> \A^1_S\\
		@VpVV @VpVV \\
		S        @>\sigma_{S}>> S
		\end{CD}\] 
		The pullback 
		\[ p^*:   GW^{[i]} (S,\sigma_S) \rightarrow GW^{[i]} (\A^1_S , \sigma_{\A^1_S})  \]
		is an isomorphism.
	\end{theo}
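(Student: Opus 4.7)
The plan is to deduce $C_2$-equivariant $\A^1$-invariance from the d\'evissage theorem via the projective completion $\A^1_S \hookrightarrow \PS^1_S$, combined with a projective-bundle-type splitting. Since the zero section $0 : S \hookrightarrow \A^1_S$ is $C_2$-equivariant and satisfies $0^* \circ p^* = \mathrm{id}$, the pullback $p^*$ is automatically split injective, so the task reduces to showing that its cofiber vanishes.

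First I would equip $\PS^1_S$ with the product involution $\sigma = (\mathrm{id}_{\PS^1}, \sigma_S)$, so that the section at infinity $\infty : S \hookrightarrow \PS^1_S$ and the open immersion $j : \A^1_S \hookrightarrow \PS^1_S$ are both equivariant and $p$ factors as $\A^1_S \xrightarrow{j} \PS^1_S \to S$. The Karoubi localization cofiber sequence then reads
\[ GW^{[i]}_{\{\infty\}}(\PS^1_S, \sigma, \SO) \to GW^{[i]}(\PS^1_S, \sigma, \SO) \to GW^{[i]}(\A^1_S, \sigma_{\A^1_S}) \to GW^{[i+1]}_{\{\infty\}}(\PS^1_S, \sigma, \SO), \]
and Theorem~\ref{thm:Devissage} applied to the regular codimension-one closed immersion $\infty$ identifies
\[ GW^{[i]}_{\{\infty\}}(\PS^1_S, \sigma, \SO) \simeq GW^{[i-1]}(S, \sigma_S, \omega_{S/\PS^1_S}). \]

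Next I would establish a $C_2$-equivariant projective bundle formula for $\PS^1_S$ with trivial involution on the $\PS^1$-factor, giving a splitting
\[ GW^{[i]}(\PS^1_S, \sigma, \SO) \simeq GW^{[i]}(S, \sigma_S) \oplus GW^{[i-1]}(S, \sigma_S, \omega_{S/\PS^1_S}). \]
This follows by an argument analogous to the one used in Theorem~\ref{P1switch}, but with trivial involution on $\PS^1$ the double-dual sign of Definition~\ref{D} is $+1$ rather than $-1$, so no additional twist or shift by two arises and the non-base summand matches the d\'evissage identification on the nose. I would then verify that the d\'evissage map $GW^{[i]}_{\{\infty\}}(\PS^1_S) \to GW^{[i]}(\PS^1_S)$ is an equivalence onto this second summand, so that the cofiber sequence collapses to an equivalence $GW^{[i]}(S, \sigma_S) \xrightarrow{\sim} GW^{[i]}(\A^1_S, \sigma_{\A^1_S})$. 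This equivalence coincides with $p^*$ because the base summand of the splitting is, by construction, the image of the pullback along the structure morphism $\PS^1_S \to S$.

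The main obstacle is the equivariant projective bundle formula of the middle step, together with the accompanying sign and twist bookkeeping: one must check that the non-base summand is literally $GW^{[i-1]}(S, \sigma_S, \omega_{S/\PS^1_S})$ with the same dualizing data as produced by d\'evissage, and that the connecting map between them is the identity (up to equivalence). Once this alignment is in place the remainder is a formal manipulation of the localization triangle.
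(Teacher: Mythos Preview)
Your proposal is correct and follows essentially the same route as the paper: localization for the open complement $\A^1_S \subset \PS^1_S$, d\'evissage at the invariant rational point, and a $C_2$-equivariant projective bundle formula for $(\PS^1_S,\sigma)$, assembled into a map of fiber sequences so that $p^*$ is forced to be an equivalence. The only cosmetic differences are that the paper uses the point $[0:1]$ rather than $\infty$, and it records the projective bundle formula as a separate proposition (proved verbatim from \cite[Theorem~9.10]{Sch17}) with trivial line-bundle twist---your $\omega_{S/\PS^1_S}$ is canonically $\SO_S$ here, so the bookkeeping you flag as the main obstacle is in fact trivial.
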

	For affine schemes with involution Theorem \ref{c2a1i} was proved by Karoubi  (cf. \cite[Part II]{Kar74}), but it was not known for regular schemes with involution. If $S$ is quasi-projective, then it can be covered by invariant affine open subschemes, and we can use the Mayer-Vietoris of Schlichting (\cite{Sch17}) to reduce the quasi-projective case to the affine case. However, the method of Meyer-Vietoris does not work if $S$ is not quasi-projective because there exist non-quasi-projective regular schemes which can not be covered by invariant affine opens. 
	
	It is easy to see that schemes with involution can be identified with schemes with $C_2$-action where $C_2$ is the cyclic group of order two considered as an algebraic group. Together with a variant of Nisnevich excision \cite[Theorem 9.6]{Sch17} adapted to schemes with involution, we obtain the $C_2$-representability of Hermitian $K$-theory in the $C_2$-equivariant motivic homotopy category $\mathcal{H}^{C_2}_\bullet(S)$ of Heller, Krishna and \O stv\ae r \cite{HKO14}.\  The following result is proved in Theorem  \ref{thm:C2representablity}. 
	\begin{theo} Let $S$ be a regular scheme with $\frac{1}{2} \in \SO_S$, and let $(X,\sigma) \in \textnormal{Sm}^{C_2}_S$. Then, there is a bijection of sets
		\[[S^n \wedge (X,\sigma)_+, GW^{[i]}]_{\mathcal{H}^{C_2}_\bullet(S)} = GW_n^{[i]}(X,\sigma). \]
	\end{theo}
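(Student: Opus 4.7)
The plan is to verify that $GW^{[i]}$, viewed as a presheaf of spectra on $\Sm^{C_2}_S$, represents the asserted functor in the motivic model structure of \cite{HKO14}. Following the usual Morel--Voevodsky template, one reduces this to checking two criteria: $C_2$-equivariant Nisnevich descent and $C_2$-equivariant $\A^1$-invariance. If $GW^{[i]}$ satisfies both, it is already motivically fibrant (up to the appropriate $\Omega$-spectrum replacement), and homotopy classes of maps out of $S^n \wedge (X,\sigma)_+$ in $\mathcal{H}^{C_2}_\bullet(S)$ compute $\pi_n$ of its value at $(X,\sigma)$.

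First I would set up the presheaf by sending $(X,\sigma)\mapsto GW^{[i]}(X,\sigma,(\SO_X,\can))$, with pullbacks along $C_2$-equivariant morphisms providing the functoriality. Second, the $C_2$-equivariant $\A^1$-invariance is exactly Theorem \ref{c2a1i}, which directly supplies the $\A^1$-locality condition. Third, for the Nisnevich descent step I would invoke the variant of Schlichting's Nisnevich excision \cite[Theorem 9.6]{Sch17} adapted to schemes with involution (announced in the introduction) to obtain homotopy cartesian squares for $C_2$-equivariant Nisnevich distinguished squares; combined with additivity for disjoint unions and the standard Brown--Gersten criterion, this yields descent in the $C_2$-Nisnevich topology.

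The main obstacle will be the equivariant Nisnevich descent. The complication is that the involution may act non-trivially on the étale cover, so one cannot simply feed each component into Schlichting's non-equivariant argument: fixed loci and swapped components must be treated uniformly. Concretely one must verify that a $C_2$-equivariant Nisnevich square induces a homotopy cartesian square of $GW$-spectra that is itself $C_2$-equivariant, which in turn relies on the $C_2$-equivariant localization and invariance results for Hermitian $K$-theory adapted from \cite{Sch17}. Once this is in place, together with Theorem \ref{c2a1i}, the representability follows formally:
\[ [S^n \wedge (X,\sigma)_+, GW^{[i]}]_{\mathcal{H}^{C_2}_\bullet(S)} = \pi_n GW^{[i]}(X,\sigma) = GW_n^{[i]}(X,\sigma). \]
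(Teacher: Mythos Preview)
Your proposal is correct and follows essentially the same route as the paper: invoke the representability criterion of \cite[Corollary 4.9]{HKO14}, which reduces the claim to $C_2$-equivariant Nisnevich excision (handled by the variant of \cite[Theorem 9.6]{Sch17} for schemes with involution, the argument being independent of the duality) and $C_2$-equivariant $\A^1$-invariance (Theorem \ref{thm:c2a1invariance}). The paper's proof is just this two-line reduction, so your elaboration on the Nisnevich step is more detailed than what the paper provides, but the strategy is identical.
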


	
	\section{Notations and prerequisites}\label{sec:notation}
	In this paper, we assume that every scheme is Noetherian and has $\frac{1}{2}$ in its global sections, and every ring is commutative Noetherian with identity and $\frac{1}{2}$. If $\mathcal{E}$ is an exact category, we write $\D^b(\mathcal{E})$ the bounded derived category of $\mathcal{E}$. 
	
	\subsection{Notations on rings} Let $R$ be a ring. Throughout the paper, we will make use of the following notations associated with $R$.
	\begin{itemize}
		\item[-] $ R\hMod$ is the category of (left) $R$-modules.
		\item[-] $\M(R)$ is the category of finitely generated $R$-modules.
		\item[-] $\M_{fl}(R)$ is the category of finite length $R$-modules.
		\item[-] $\M_J(R)$ is the category of finitely generated $R$-modules supported in an ideal $J$ of $R$.\ Recall that $\text{Supp}(M) = \{ \mathfrak{p} \in \Spec(R) | M_{\mathfrak{p}} \neq 0 \}$ and $V(J) = \{\mathfrak{p} \in \Spec(R) | \mathfrak{p} \supset J \}$. More precisely, $\M_J(R)$ is the full subcategory of $\M(R)$ consisting of those modules $M$ such that $\text{Supp}(M) \subset V(J)$. 
		\item[-] $\D^b_c(R\hMod)$ is the derived category of bounded complexes of  $R$-modules with coherent cohomology. 
		\item[-] $\D^b_{fl}(R)$ is the full subcategory of $\D^b_c(R\hMod)$ of complexes, whose cohomology modules are finite length $R$-modules. 
		
		\item[-]  $\D^b_J(R)$ is the full triangulated subcategory of $\D^b_c(R\hMod)$ of complexes whose cohomology modules are annihilated by some power of the ideal $J$.\ 
	\end{itemize}

	\begin{Def} An \textit{involution} $\sigma$ on a ring $R$ is a ring homomorphism $\sigma: R\rightarrow R$ such that $\sigma^2 = \id_R$. 
	\end{Def} 

	 Let $(R,\sigma)$ be a ring with involution and $M$ a left $R$-module. Define $M^{\op_R}$ (or simply $M^\op$) to be the same as $M$ as a set. For an element in $M^{\op}$, we use the symbol $m^{\op}$ to denote the element $m$ coming from the set structure of $M$.  Consider $M^\op$ as a right $R$-module via $m^\op a = (\sigma(a)m)^\op$.

	\begin{Def}\label{Def:dualitycoefficient} \cite[Section 7.4]{Sch10b}
		Let $(R,\sigma)$ be a ring with involution. A \textit{duality coefficient} $(I,i)$ on the category $R\hMod$ with respect to the involution $\sigma$ consists of an $R$-module $I$ equipped with an $R$-module isomorphism $i:I \rightarrow I^\op$ such that $i^2 = \id$.
	\end{Def}

 Let $\sigma:R \rightarrow R'$ be a ring homomorphism. Let $M'$ be a module over $R'$. We can form 
	the restriction of scalars 
	\[\sigma_* M' : = M'|_{R}.\] 
	which can also be considered as an $R$-module.

	
			\begin{remark}
		Note that if $(R,\sigma)$ is a ring with involution and $M$ is an $R$-module, then $\sigma_* M = M^\op$. 
	\end{remark}

	\begin{remark}\label{rmk:affineiden}
		For a duality coefficient $(I,i)$, one can form a category with duality $(R\hMod, \#^I_\sigma, \can^{I}_\sigma)$ where
		 \[ \#^I_\sigma: (R \hMod)^\op \rightarrow R\hMod: M \mapsto \Hom_R(\sigma_*M, I)   \]
		is a functor and  \[\can^I_{\sigma,M}: M \rightarrow M^{ \#^I_\sigma\#^I_\sigma } : \can_{I,M}^\sigma(x)(f^\op) = i(f(x^\op)) \]  
		is a natural morphism of $R$-modules (cf.  \cite[Section 7.4]{Sch10b}).
	\end{remark}

 The following lemma will be used in Theorem \ref{WDevissage}.
 
	\begin{lem}\label{lem: RSf}
	 Let $\sigma:R\rightarrow R'$ and $\tau: S \rightarrow S'$  isomorphisms of rings, and suppose that $\sigma' : = \sigma^{-1}$ and $ \tau': = \tau^{-1}$. Suppose furthermore that one has the following commutative diagram of rings.
	\[\xymatrix{ R \ar[d]_-{g} \ar[r]^-{\sigma} & R' \ar[d]_-{g'} \ar[r]^-{\sigma'} & R \ar[d]_-{g} \\
		S \ar[r]^-{\tau} & S' \ar[r]^-{\tau'} & S	
	}\] 
	 Let $I$ be an $R$-module and let $I'$ be an $R'$-module.  Assume that we have an $R$-module isomorphism $\sigma_I :I \rightarrow \sigma_*I'$.
	
	\noindent \textnormal{(1)}.  Let $M$ be an $R$-module and let $M'$ be an $R'$-module.\ Assume that we have an $R$-module isomorphism $\sigma_{M'} :M' \rightarrow \sigma'_*M$. Then, the map
	\[\epsilon_{M,I}:  \Hom_R(M,I) \rightarrow \sigma_*\Hom_{R'}(M',I'): f \mapsto \sigma_I f \sigma_{M'}\]
	is an $R$-module isomorphism.
	
	\noindent \textnormal{(2)}. Let $N$ be an $S$-module and let $N'$ be an $S'$-module.  Assume that we have an $S$-module isomorphism $\sigma_{N'}:N' \rightarrow \tau'_*N$. The set $\Hom_R(N,I)$ has an $S$-module structure and the map 
	\[\epsilon_{N,I}:  \Hom_R(N,I) \rightarrow \tau_*\Hom_{R'}(N',I'): f \mapsto \sigma_I f \sigma_{N'}\]
	is a well-defined $S$-module isomorphism. 

\end{lem}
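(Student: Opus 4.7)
The plan is to verify both parts by carefully tracking the restriction-of-scalars twists that appear when $\sigma_I$ and $\sigma_{M'}$ (resp.\ $\sigma_{N'}$) are composed with $f$. I would start by writing out the underlying composition of sets $M' \xrightarrow{\sigma_{M'}} M \xrightarrow{f} I \xrightarrow{\sigma_I} I'$ so that $\epsilon_{M,I}(f)$ is at least a well-defined function; the content of the lemma is then that the three twists cancel to produce a map with the right linearity property, and that the formula admits an obvious inverse.

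For part (1) my plan is a direct computation. To show $\sigma_I f \sigma_{M'}$ is $R'$-linear, I pick $r' \in R'$, set $r := \sigma'(r')$, and combine three facts: the $R'$-linearity of $\sigma_{M'}: M' \to \sigma'_* M$, which unpacks to $\sigma_{M'}(r'm') = r\cdot \sigma_{M'}(m')$ in $M$; the $R$-linearity of $f$; and the $R$-linearity of $\sigma_I: I \to \sigma_* I'$, which unpacks to $\sigma_I(rx) = r' \cdot \sigma_I(x)$ in $I'$. The three twists cancel. The $R$-linearity of $\epsilon_{M,I}$ itself, with target $\sigma_* \Hom_{R'}(M',I')$, reduces to the same identity together with commutativity of $R$. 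The inverse is the symmetric formula $h \mapsto \sigma_I^{-1} h \sigma_{M'}^{-1}$, and the two compositions are identities on the nose because $\sigma_I^{-1} \sigma_I = \id_I$ and $\sigma_{M'} \sigma_{M'}^{-1} = \id_{M'}$.

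For part (2) my plan is to first reduce the underlying $R$-module claim to part (1) by restricting scalars along $g$ and $g'$; this immediately gives an $R$-module isomorphism $\Hom_R(N,I) \to \sigma_* \Hom_{R'}(N',I')$ with the expected inverse. The remaining work is to upgrade the linearity to $S$ and to identify the target as $\tau_* \Hom_{R'}(N',I')$. For this the key computation is that for $s \in S$ and $n' \in N'$, the $S'$-linearity of $\sigma_{N'}: N' \to \tau'_* N$ yields $\sigma_{N'}(\tau(s) n') = \tau'(\tau(s)) \cdot \sigma_{N'}(n') = s \cdot \sigma_{N'}(n')$ in $N$. Substituting into the definition of $\epsilon_{N,I}$ gives $\epsilon_{N,I}(s \cdot f)(n') = \sigma_I(f(s \cdot \sigma_{N'}(n'))) = (\tau(s) \cdot \epsilon_{N,I}(f))(n')$, which is precisely the $S$-linearity statement into $\tau_* \Hom_{R'}(N',I')$.

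I expect the only real obstacle to be the bookkeeping: keeping the $\sigma$-twist on the $R$-side distinct from the $\tau$-twist on the $S$-side, and invoking the commutativity $g'\sigma = \tau g$ at the precise moment required to reconcile them. Once the twists are aligned, every verification collapses because $\sigma$ and $\tau$ are assumed ring isomorphisms with explicit inverses $\sigma'$ and $\tau'$, so all cancellations are forced.
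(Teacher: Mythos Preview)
Your verification is correct, and it is exactly the routine check one expects: track the $R'$-linearity of $\sigma_I f \sigma_{M'}$ via the three cancelling twists, observe that $h \mapsto \sigma_I^{-1} h \sigma_{M'}^{-1}$ is the inverse, and for part~(2) restrict scalars along $g,g'$ to invoke part~(1) and then upgrade to $S$-linearity using $\tau'\tau = \id_S$ together with the square $\tau' g' = g\sigma'$.

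There is nothing to compare against: the paper states this lemma without proof and moves on. Your write-up would serve perfectly well as the omitted argument. The only cosmetic point is that the paper calls $\sigma_{M'}$ an ``$R$-module isomorphism'' even though $M'$ and $\sigma'_*M$ are naturally $R'$-modules; since $\sigma$ is a ring isomorphism this is harmless, and you have silently (and correctly) read it as $R'$-linearity.
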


	\subsection{Notations on schemes}
	Let $X$ be a scheme. We fix the following notations that will be needed in the remaining sections.
	\begin{itemize}
		\item[-] $ \SO_X\hMod$ is the category of $\SO_X$-modules.
		\item[-] $\Q(X)$ is the category of quasi-coherent $\SO_X$-modules.
		\item[-] $\VB(X)$ is the category of finite rank locally free coherent modules over $X$.
		\item[-] $\M_Z(X)$ is the category of finitely generated $\SO_X$-modules supported in a closed scheme $Z$ of $X$.\ Recall that $\text{Supp}(\mathcal{F}) = \{ x \in X | \mathcal{F}_x \neq 0 \}$. More precisely, $\M_Z(X)$ is the full subcategory of $\M(X)$ consisting of those modules $\mathcal{F}$ such that $\text{Supp}(\mathcal{F}) \subset Z$. 
			\item[-] $K^b_c(\Q(X))$ is the homotopy category of bounded complexes of quasi-coherent sheaves on $X$ with coherent cohomology. 
		\item[-] $\D^b_c(\Q(X))$ is the derived category of bounded complexes of quasi-coherent sheaves on $X$ with coherent cohomology. 
		\item[-] $\D^b_{c,Z}(\Q(X))$ is the full triangulated subcategory of $\D^b_c(\Q(X))$ consisting of complexes $\mathcal{F}^\bullet$ such that 
		$\{ x \in X | \mathcal{H}^i(\mathcal{F}^\bullet)_x \neq 0 \textnormal{ for some $i \in \Z$ }  \} \subseteq Z$. 
		\item[-] $\D^b(X)$ is the derived category $\D^b(\VB(X))$.
		\item[-] $\D^b_Z (X)$ is the full triangulated category of  $\D^b (X)$ whose objects are the complexes $\mathcal{F}^\bullet$ such that 
		$\{ x \in X | \mathcal{H}^i(\mathcal{F}^\bullet)_x \neq 0 \textnormal{ for some $i \in \Z$ }  \} \subseteq Z$. 
		\item[-] $\Ch^b_c(\Q(X))$ is the dg category of bounded complexes of quasi-coherent $\SO_X$-modules with coherent cohomology. 
		\item[-] $\Ch^b_{c,Z}(\Q(X))$ is the  full dg subcategory of  $\Ch^b_c(\Q(X))$ consisting of complexes $\mathcal{F}^\bullet$ such that 
		$\{ x \in X | \mathcal{H}^i(\mathcal{F}^\bullet)_x \neq 0 \textnormal{ for some $i \in \Z$ }  \} \subseteq Z$. 
		\item[-] $\Ch^b (X)$ is the dg category $\Ch^b(\VB(X))$ of bounded complexes of locally free coherent $\SO_X$-modules. 
		\item[-] $\Ch^b_Z (X)$ is the full dg subcategory $\Ch^b(X)$ of those complexes $\mathcal{F}^\bullet$ such that 
		$\{ x \in X | \mathcal{H}^i(\mathcal{F}^\bullet)_x \neq 0 \textnormal{ for some $i \in \Z$ }  \} \subseteq Z$. 
	\end{itemize}

	\begin{Def} An \textit{involution} $\sigma_X$ on a ringed space $X$ is a morphism $\sigma_ X: X\rightarrow X$ of ringed spaces such that $\sigma^2 = \id_X$. 
	\end{Def} 
	\begin{remark}
		If $X = \Spec(R)$, to give a scheme with involution $(X, \sigma)$ is the same as giving a ring with involution $(R,\sigma)$.  
	\end{remark}

	\begin{Def}\label{D}
		A \textit{duality coefficient} $(I,i)$ for the category of $\SO_{X}$-modules on $X$ with respect to the involution $\sigma_X$ consists of an $\SO_{X}$-module $I$ equipped with an $\SO_X$-module isomorphism $i:I \rightarrow \sigma_* I$ such that $\sigma_* i \circ i = \id$.  An isomorphism $(I,i) \rightarrow (I'.i')$ of duality coefficients is an isomorphism of $\SO_{X}$-modules $\alpha: I \rightarrow I'$ such that the following diagram commutes
		\[ \begin{CD}
		I @>\alpha>> I'\\
		@ViVV @V{i'}VV \\
		\sigma_*I @>{\sigma_*\alpha}>> \sigma_*I'.
		\end{CD}\]
	\end{Def}
	
	\begin{remark}
Let $(X, \sigma)$ be a scheme with involution. 
By globalizing the affine case in Remark \ref{rmk:affineiden}, we  obtain a category with duality 
\[(\SO_X\hMod, \#_\sigma^I, \can_\sigma^I).\]
If the double dual identification $\can_\sigma^I$ is clear from the context, we will use the simplified notation $(\SO_X\hMod, \#_\sigma^I)$ instead of the more precise $(\SO_X \hMod, \#_\sigma^I, \can_\sigma^I)$.
	\end{remark} 
	\begin{lem}\label{II'}
		Let $\alpha:(I,i)\rightarrow (I',i')$ be an isomorphism of duality  coefficients. Then, the identity functor induces a duality preserving equivalence of categories with duality
		\[(\id, \alpha_*):(\SO_X \hMod, \#_\sigma^I) \rightarrow (\SO_X \hMod, \#_{\sigma}^{I'}) \] \end{lem}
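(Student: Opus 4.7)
The plan is to produce a natural isomorphism $\alpha_*: \#^I_\sigma \Rightarrow \#^{I'}_\sigma$ and then check that the pair $(\id, \alpha_*)$ satisfies the compatibility axiom of a morphism of categories with duality; since the underlying functor $\id$ is tautologically an equivalence, this is all that is required.

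First I would define the component at an $\SO_X$-module $M$ to be post-composition with $\alpha$, namely
\[
(\alpha_*)_M \colon \Hom_{\SO_X}(\sigma_*M,I) \longrightarrow \Hom_{\SO_X}(\sigma_*M,I'), \qquad f \mapsto \alpha \circ f.
\]
Naturality in $M$ is immediate from associativity of composition: for any morphism $g : N \to M$ of $\SO_X$-modules, both composites in the naturality square send $f$ to $\alpha \circ f \circ \sigma_*(g)$. Since $\alpha$ is an $\SO_X$-module isomorphism, post-composition with $\alpha$ is a bijection, so $\alpha_*$ is a natural isomorphism between the two contravariant functors $\#^I_\sigma$ and $\#^{I'}_\sigma$.

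Second, because $F = \id$, the duality-preserving axiom reduces to the commutativity, for every $M$, of the square
\[
\begin{CD}
M @>\can^{I}_{\sigma}>> M^{\#^I_\sigma \#^I_\sigma} \\
@V \can^{I'}_{\sigma} VV @V (\alpha_*)_{M^{\#^I_\sigma}} VV \\
M^{\#^{I'}_\sigma \#^{I'}_\sigma} @>((\alpha_*)_M)^{\#^{I'}_\sigma}>> M^{\#^I_\sigma \#^{I'}_\sigma}.
\end{CD}
\]
The verification is local and, via Remark \ref{rmk:affineiden}, reduces to the affine case. Evaluating both composites at a local section $x \in M$ and then on a local section $f \in \Hom(\sigma_* M, I)$ unpacks both sides into a single comparison of $i' \circ \alpha$ with $(\sigma_*\alpha) \circ i$ applied to $f(x^\op)$. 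These agree precisely because the defining diagram in Definition \ref{D} of an isomorphism of duality coefficients asserts exactly $i' \circ \alpha = (\sigma_*\alpha) \circ i$.

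The only genuine obstacle is notational bookkeeping of the opposite-module decorations coming from $\sigma_*$ and of the relative order of $\#^I_\sigma$ and $\#^{I'}_\sigma$ in iterated duals. Once the compatibility square is drawn in the correct direction, the equality of the two composites is forced by the single commutative square characterising an isomorphism $(I,i)\to (I',i')$ of duality coefficients, and no further input—homological, geometric, or otherwise—is required.
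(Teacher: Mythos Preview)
Your proposal is correct and takes essentially the same approach as the paper: the paper's own proof simply defines $\alpha_*$ as post-composition with $\alpha$, notes naturality in $M$, and then declares the remaining axioms ``straightforward to check,'' whereas you actually carry out that straightforward check and identify that it collapses to the defining square $i'\circ\alpha=(\sigma_*\alpha)\circ i$ of Definition~\ref{D}. One minor remark: the paper works with the internal sheaf $\sHom_{\SO_X}$ rather than $\Hom_{\SO_X}$, but since you explicitly localise via Remark~\ref{rmk:affineiden} this makes no difference.
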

	\begin{proof}
		The duality compatibility isomorphism is defined by 
		\[      \alpha_*:\sHom_{\SO_X}(\sigma_*M, I) \rightarrow \sHom_{\SO_X}(\sigma_*M,I')   :f \mapsto \alpha\circ f     \] 
		which is natural in $M$. Now it is straightforward to check the axioms. 
	\end{proof}

	\subsection{Dualizing complexes with involution} Recall the following definition of a duality complex from \cite[Definition 1.7]{Gil07b} or \cite{Ha66}.  
	
	\begin{Def}
		Let $X$ be a scheme.\ A \textit{dualizing complex} $I^\bullet$ on $X$ is a complex  of injective modules
		\[I^\bullet :=  (\cdots  \rightarrow 0 \rightarrow I^m \stackrel{d^m}\longrightarrow I^{m+1}\rightarrow \cdots \rightarrow I^{n-1} \stackrel{d^{n-1}}\longrightarrow I^{n} \rightarrow 0 \rightarrow \cdots )  \in \D^b_{c}(\Q(X))\] such that the natural morphism of complexes
		\[\can_{I,M} : M^\bullet \rightarrow \sHom_{\SO_{X}}(\sHom_{\SO_{X}}(M^\bullet,I^\bullet), I^\bullet ) : m \mapsto \big( \can_{I,M}(m) : f \mapsto (-1)^{|f||m|} f(m) \big)\]
		is a quasi-isomorphism for any $M^\bullet \in \D^b_{c}(\Q(X))$. A dualizing complex is called \textit{minimal} if $I^r$ is an essential extension of $\ker(d^r)$ for all $r \in \Z$. 
	\end{Def}
	
	\begin{remark}
		Any dualizing complex is quasi-isomorphic to a minimal dualizing complex \cite[Proposition 1.9]{Gil07b}. 
	\end{remark}
	
	\begin{Def}\label{Dwin}
		Let $(X,\sigma)$ be a Noetherian scheme with involution and $\frac{1}{2} \in \SO_X$. A \textit{dualizing complex with involution} on $(X,\sigma)$ is a pair $(I^\bullet,  \sigma_{I})$ consisting of a dualizing complex $ I^\bullet$ and an isomorphism of complexes of $\SO_X$-modules $\sigma_{I}: I^\bullet \rightarrow \sigma_*(I^\bullet)$ such that $\sigma_*(\sigma_I) \circ \sigma_I = \id_I$. 
	\end{Def}
	\begin{remark}
		Note that $(I^p,\sigma_{I^p})$ is a duality coefficient for the category $\SO_X \hMod$. 
	\end{remark}
	\begin{lem}\label{lem:goe-dualizingcomplex} Let $(X,\sigma)$ be a Gorenstein scheme of finite Krull dimension with involution and $\frac{1}{2} \in \SO_X$. Let $(\LC,\sigma_\LC)$ be a dualizing coefficient with $\LC$ a locally free $\SO_X$-module of rank one. Then, there exists a minimal dualizing complex with involution $(J^\bullet,\sigma_J)$, which is quasi-isomorphic to $(\LC,\sigma_\LC)$. 
	\end{lem}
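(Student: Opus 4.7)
The plan is to realise $J^\bullet$ as a minimal injective resolution of $\LC$ and transport $\sigma_\LC$ through the explicit Cousin decomposition of that resolution, so that the involution property is built in rather than imposed after the fact.

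First I would use that $X$ is Gorenstein of finite Krull dimension to conclude that $\LC$ has globally bounded injective dimension, so a minimal injective resolution $\LC\hookrightarrow J^\bullet$ exists and sits in $\D^b_c(\Q(X))$. By the standard theory of residual complexes on Gorenstein schemes (cf.\ \cite{Ha66}, \cite[Section 1]{Gil07b}), $J^\bullet$ is a minimal dualizing complex quasi-isomorphic to $\LC$ placed in degree zero, and it decomposes degree-wise as a Cousin complex
\[ J^p \;=\; \bigoplus_{x\in X^{(p)}} (i_x)_{*}\bigl(E_x \otimes_{\SO_{X,x}} \LC_x\bigr), \]
where $X^{(p)}$ denotes the set of codimension-$p$ points of $X$ and $E_x$ is the injective hull of the residue field $k(x)$ in $\SO_{X,x}\hMod$.

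Next I would construct $\sigma_J\colon J^\bullet\to \sigma_{*}J^\bullet$ by transport along this decomposition. Because $\sigma$ is an involutive automorphism, it stabilises $X^{(p)}$, and the induced local isomorphism $\SO_{X,\sigma(x)}\cong \SO_{X,x}$ identifies $E_{\sigma(x)}$ with the restriction of $E_x$. Combining this with the isomorphism $\LC_{\sigma(x)}\cong \sigma_{*}\LC_x$ coming from $\sigma_\LC$ produces summand-by-summand isomorphisms which, after reindexing $y=\sigma(x)$, assemble into $\sigma_J$. The Cousin differentials arise from local cohomology and connecting maps, both functorial under $\sigma$, so $\sigma_J$ is a chain map. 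Performing the transport a second time permutes points via $\sigma^{2}=\id$ and composes $\sigma_{*}\sigma_\LC\circ \sigma_\LC = \id$, so $\sigma_{*}(\sigma_J)\circ \sigma_J = \id$ holds strictly, not merely up to homotopy.

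The main obstacle — and my reason for using the explicit Cousin decomposition rather than working purely abstractly — is precisely this strict involution condition. If one lifts $\sigma_\LC$ degree-wise by bare injectivity to some isomorphism $\alpha\colon J^\bullet \to \sigma_{*} J^\bullet$, then $\sigma_{*}(\alpha)\circ \alpha$ is only an automorphism of $J^\bullet$ lifting $\id_\LC$ and need not equal $\id$; correcting $\alpha$ to achieve strictness would require a symmetrisation that uses $\frac{1}{2}\in\SO_X$ together with care to keep the corrected map an isomorphism lifting $\sigma_\LC$. The Cousin-complex construction sidesteps this delicate bookkeeping because the permutation of summands under $\sigma$ is manifestly of order two.
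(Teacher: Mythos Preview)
Your proposal is correct and follows essentially the same route as the paper: both realise $J^\bullet$ via the Cousin complex and obtain the strict involution $\sigma_*(\sigma_J)\circ\sigma_J=\id$ from the functorial action of $\sigma$ on the point-indexed summands. The only cosmetic difference is that the paper first builds the Cousin complex $I^\bullet$ of $\SO_X$ with its involution and then sets $J^\bullet:=\LC\otimes I^\bullet$, whereas you take the Cousin resolution of $\LC$ directly; since $\LC$ is invertible these are the same construction.
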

	\begin{proof}
		Let $X^{(p)} := \{ x \in X| \dim \SO_{X,x} = p \}$.\ Recall  \cite[p241]{Ha66} that we have a Cousin complex
		\[  0 \rightarrow \SO_X \stackrel{d^{-1}}\longrightarrow I^0 \stackrel{d^{0}}\longrightarrow I^1 \longrightarrow \cdots \longrightarrow I^n \longrightarrow 0 \longrightarrow \cdots \]
		with $I^0 := \bigoplus_{x \in X^{(0)}} i_{x*} (\SO_{X,x})$ and $I^p := \bigoplus_{x\in X^{(p)}} i_{x*}(\ckr (d^{t-2})_{x})$ where the map $i_x: \Spec(\SO_{X,x}) \rightarrow X$ is the canonical map. If $X$ is Gorenstein, the Cousin complex provides a minimal dualizing complex $I^\bullet$ on $X$ quasi-isomorphic to $\SO_X$. This can be checked locally on closed points by \cite[Corollary V.2.3 p259]{Ha66}. For the case of Gorenstein local rings, see \cite[Theorem 5.4]{Sha69}.  
		
		We define a map $\sigma_I: I^p \rightarrow \sigma_* I^p$ as follows. 
		We consider the following commutative  diagram
		\[\xymatrix{ \Spec(\SO_{X,x}) \ar[d]^-{i_x} \ar[r]^-{\sigma_{X,x}} & \Spec(\SO_{X,\sigma(x)}) \ar[d]^-{i_{\sigma(x)}} \\
			X \ar[r]^-{\sigma_X} & X}\]
		Assume that $M$ is an $\SO_{X}$-module together with a map $M \rightarrow \sigma_{*}M$. If $x = \sigma(x)$, the map $M \rightarrow \sigma_{*}M$ induces a canonical map $ i_{x,*} i_x^* M \rightarrow \sigma_* i_{x,*} i_x^* M  $ of $\SO_X$-modules by using the natural isomorphism $\sigma_* i_{x,*} i_x^* M \cong i_{\sigma(x),*} \sigma_{x,*}  i_{x}^* M  \cong  i_{\sigma(x),*} i_{\sigma(x)}^* \sigma_* M $. If $x \neq \sigma(x)$, the map $M \rightarrow \sigma_{*}M$ still induces a canonical map \[ i_{x,*} i_x^* M \oplus i_{\sigma(x),*} i_{\sigma(x)}^* M \rightarrow \sigma_* i_{\sigma(x),*} i_{\sigma(x)}^* M   \oplus\sigma_* i_{x,*} i_x^* M \]
		by the functorial isomorphism above. The map $\sigma_I: I^p \rightarrow \sigma_* I^p$ is obtained by applying this construction inductively. (Note that $x \in X^{(p)}$ if and only if $\sigma(x) \in X^{(p)}$).  By our construction it is clear that $\sigma_*(\sigma_I) \circ \sigma_I = \id_I$.
		
		If $\LC$ is a line bundle on $X$, then $J^\bullet:= \LC \otimes I^\bullet$ is  a dualizing complex, cf.\ \cite[Proof (1) of Theorem V.3.1, p. 266]{Ha66}. Moreover, the complex $J^\bullet$ is quasi-isomorphic to $\LC$. Note that we have a canonical isomorphism $\sigma_*(\LC \otimes I^\bullet) \rightarrow \sigma_* \LC \otimes \sigma_* I^\bullet $. These observations reveal a canonical involution $ \sigma_J: J^\bullet \rightarrow \sigma_* J^\bullet$.
	\end{proof}

	\subsection{Setup for Hermitian $K$-theory of schemes with involution } Let $(X,\sigma)$ be a scheme with involution. In this paper, we work within the framework of Schlichting \cite{Sch17}. Recall that $\Ch^b_c(\Q(X))$ is a closed symmetric monoidal category under the tensor product of complexes $M^\bullet \otimes_{\SO_{X}} N^\bullet$ given by
	\[  (M^\bullet \otimes_{\SO_{X}} N^\bullet)^n : = \bigoplus_{i+j = n} M^i \otimes_{\SO_{X}} N^j, ~~~ d (m \otimes n ) = dm \otimes n + (-1)^{|m||n|} m \otimes dn  \]
	and internal homomorphism complexes $ \sHom_{\SO_{X}}(M^\bullet, N^\bullet)$ given by
	\[ \sHom_{\SO_X}(M^\bullet, N^\bullet)^n := \prod_{j-i = n} \sHom_{\SO_X}(M^i, N^j), ~~~ df = d \circ f -(-1)^{|f|} f \circ d. \]
	Let $ (I^\bullet,\sigma_{I})$ be a dualizing complex with involution.  Then, we define the duality functor 
	\[ \#^{I^\bullet}_\sigma: (\Ch^b_c(\Q(X)))^\op \rightarrow \Ch^b_c(\Q(X)) : E^\bullet \mapsto \sHom_{\SO_{X}}(\sigma_*E^\bullet, I^\bullet ) ,\] and  the double dual identification $ \can^{I^\bullet}_{\sigma, E} : E \rightarrow E^{ \#^{I^\bullet}_\sigma  \#^{I^\bullet}_\sigma}  $ given by the composition
	\[ E \stackrel{\can_E}\longrightarrow \sHom_{\SO_{X}}( \sHom_{\SO_{X}}(E ,I), I) \longrightarrow \sHom_{\SO_{X}}(\sHom_{\SO_{X}}(E, \sigma_*I), I )  \longrightarrow  \sHom_{\SO_{X}}(\sigma_* \sHom_{\SO_{X}}(\sigma_*  E, I), I ).  \] 
	Here the first map $\can_E$ is defined by $\can_E (x)(f) = (-1)^{|x||f|} f(x)$ as in the case of trivial duality, the second is induced by $\sigma_I: I \rightarrow\sigma_*I$, and the third is induced by $\sigma_{*} \sHom_{\SO_{X}}(M, N) =  \sHom_{\SO_{X}}(\sigma_{*}M,\sigma_{*} N) $.
	\begin{lem}
		The quadruple 
		\[ (\Ch^b_c(\Q(X)), \quis,  \#^{I^\bullet}_\sigma , \can^{I^\bullet}_\sigma )\]
		is a dg category with weak equivalence and duality.
	\end{lem}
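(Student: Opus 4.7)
The plan is to verify, in sequence, the axioms for a dg category with weak equivalence and duality in the sense of Schlichting \cite{Sch17}. The underlying dg structure on $\Ch^b_c(\Q(X))$ coming from the internal Hom complexes recalled above is standard, and the class of quasi-isomorphisms of bounded complexes is closed under composition, contains all identities, and satisfies the 2-out-of-3 property, so $(\Ch^b_c(\Q(X)), \quis)$ is a dg category with weak equivalences in Schlichting's sense.

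Next I would check that $\#^{I^\bullet}_\sigma$ is a dg functor that preserves quasi-isomorphisms. Since $\sigma_X$ is an isomorphism of ringed spaces, $\sigma_*$ is exact, preserves $\Ch^b_c(\Q(X))$, and commutes with $\sHom_{\SO_X}$ via the canonical natural isomorphism $\sigma_*\sHom_{\SO_X}(M,N) \cong \sHom_{\SO_X}(\sigma_*M,\sigma_*N)$. Thus $\#^{I^\bullet}_\sigma$ factors as $\sigma_*$ followed by the standard dg functor $\sHom_{\SO_X}(-, I^\bullet)$, and the latter preserves quasi-isomorphisms on $\Ch^b_c(\Q(X))$ because $I^\bullet$ is a bounded complex of injective quasi-coherent $\SO_X$-modules.

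For the double dual identification, naturality in $E$ is immediate since each of the three constituent maps is natural in $E$. That $\can^{I^\bullet}_{\sigma,E}$ is a quasi-isomorphism for every $E \in \Ch^b_c(\Q(X))$ reduces, after replacing one factor of $I^\bullet$ by $\sigma_*I^\bullet$ via the isomorphism $\sigma_I$, to the defining property of the dualizing complex, namely that the untwisted map $\can_E \colon E \to \sHom(\sHom(E, I^\bullet), I^\bullet)$ is a quasi-isomorphism.

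I expect the main obstacle to be the duality identity
\[ (\#^{I^\bullet}_\sigma)(\can^{I^\bullet}_{\sigma, E}) \circ \can^{I^\bullet}_{\sigma,\, \#^{I^\bullet}_\sigma E} = \id_{\#^{I^\bullet}_\sigma E}. \]
My approach is to unwind the three-step definition of $\can^{I^\bullet}_\sigma$ on both sides and track how the classical sign-laden identity for the untwisted duality $\sHom(-, I^\bullet)$ interacts with the occurrences of $\sigma_I$ and $\sigma_*(\sigma_I)$ introduced by the second and third maps in the definition of $\can^{I^\bullet}_\sigma$. After rearranging, each pair of consecutive occurrences of the involution data collapses via the axiom $\sigma_*(\sigma_I) \circ \sigma_I = \id_{I^\bullet}$ of Definition \ref{Dwin}, and the remaining identity is the well-known one for the trivial involution. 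Combined with the preservation of quasi-isomorphisms established in the second step, this yields all the axioms.
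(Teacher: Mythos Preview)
Your proposal is correct and follows the natural route: verify the dg and weak-equivalence axioms as standard, then check that $\#^{I^\bullet}_\sigma$ preserves quasi-isomorphisms, that $\can^{I^\bullet}_\sigma$ is a natural weak equivalence, and finally the compatibility identity $(\#^{I^\bullet}_\sigma)(\can^{I^\bullet}_{\sigma, E}) \circ \can^{I^\bullet}_{\sigma,\, \#^{I^\bullet}_\sigma E} = \id$, using $\sigma_*(\sigma_I)\circ\sigma_I=\id$ to collapse the involution data back to the trivial-involution case.

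The paper's own proof is far terser: it observes that the only nontrivial point is that $(\Ch^b_c(\Q(X)), \#^{I^\bullet}_\sigma, \can^{I^\bullet}_\sigma)$ is a category with duality and simply cites \cite[Section~3.7]{Gil09} for this verification. So you are not taking a different route, but rather unpacking what the paper outsources to Gille. Your version is more self-contained and makes explicit where the hypothesis $\sigma_*(\sigma_I)\circ\sigma_I=\id$ from Definition~\ref{Dwin} is actually used, at the cost of having to chase the three-step composite defining $\can^{I^\bullet}_\sigma$ through the duality identity by hand.
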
 
	\begin{proof}
		It is enough to check that\[ (\Ch^b_c(\Q(X)),   \#^{I^\bullet}_\sigma , \can^{I^\bullet}_\sigma )\]
		is a category with duality, see \cite[Section 3.7]{Gil09}. 
	\end{proof}
	\begin{Def}
		We define the \textit{coherent Hermitian $K$-theory spectrum of schemes with involution} of $(X,\sigma_X)$ with respect to $(I,\sigma_I)$ as 
		\[GW^{[i]}(X,\sigma_X, (I, \sigma_I)) := GW^{[i]}(\Ch^b_c(\Q(X)), \quis,  \#^{I^\bullet}_\sigma , \can^{I^\bullet}_\sigma )\]
		for every $i \in \Z$, where $GW^{[i]}(\Ch^b_c(\Q(X)), \quis,  \#^{I^\bullet}_\sigma , \can^{I^\bullet}_\sigma )$ is the Grothendieck-Witt spectrum of the dg category with weak equivalences and duality $(\Ch^b_c(\Q(X)), \quis,  \#^{I^\bullet}_\sigma , \can^{I^\bullet}_\sigma )$. The \textit{Witt group} of $(X,\sigma_X)$ with respect to $(I,\sigma_I)$ is defined as
		\[W^{i}(X,\sigma_X, (I, \sigma_I)) := W^{i}(\D^b_c (\Q(X)),   \#^{I^\bullet}_\sigma , \can^{I^\bullet}_\sigma ), \]
		where $W^{i}(\D^b_c (\Q(X)),   \#^{I^\bullet}_\sigma , \can^{I^\bullet}_\sigma )$ is the Witt group of the triangulated category with duality $(\D^b_c (\Q(X)),   \#^{I^\bullet}_\sigma , \can^{I^\bullet}_\sigma )$. Let $Z$ be an invariant closed subscheme of $X$. We can also define 
		\[ GW^{[i]}_Z(X,\sigma_X, (I, \sigma_I)) := GW^{[i]}(\Ch^b_{c,Z}(\Q(X)), \quis,  \#^{I^\bullet}_\sigma , \can^{I^\bullet}_\sigma ) \]
		and
		\[ W^{i}_Z(X,\sigma_X, (I, \sigma_I)) := W^{i}(\D^b_Z (X),   \#^{I^\bullet}_\sigma , \can^{I^\bullet}_\sigma ). \]
	\end{Def}
	Let $(\LC,\sigma_\LC)$ be a dualizing coefficient with $\LC$ a locally free $\SO_X$-module of rank one. Then, we have a dg category with duality
	\[ (\Ch^b(X),  \quis,  \#^{\LC}_\sigma , \can^{\LC}_\sigma ),\]
	where 
	\[ \#^{\LC}_\sigma: (\Ch^b (X))^\op \rightarrow \Ch^b (X) : E^\bullet \mapsto \sHom_{\SO_{X}}(\sigma_*E^\bullet, \LC ) ,\] and  the double dual identification $ \can^{\LC}_{\sigma, E} : E \rightarrow E^{ \#^{\LC}_\sigma  \#^{\LC}_\sigma}$ is given by the composition
	\[ E \stackrel{\can_E}\longrightarrow \sHom_{\SO_{X}}( \sHom_{\SO_{X}}(E ,\LC), \LC) \longrightarrow \sHom_{\SO_{X}}(\sHom_{\SO_{X}}(E, \sigma_*\LC), \LC )  \longrightarrow  \sHom_{\SO_{X}}(\sigma_* \sHom_{\SO_{X}}(\sigma_*  E, \LC), \LC ).  \] 
	Here the first map $\can_E$ is defined by $\can_E (x)(f) = (-1)^{|x||f|} f(x)$, the second map is induced by $\sigma_\LC: \LC \rightarrow\sigma_*\LC$, and the third map is induced by $\sigma_{*} \sHom_{\SO_{X}}(M, N) =  \sHom_{\SO_{X}}(\sigma_{*}M,\sigma_{*} N) $.
	\begin{Def}
		We define the \textit{Hermitian $K$-theory spectrum of schemes with involution} of $(X,\sigma_X)$ with respect to $(\LC,\sigma_\LC)$ as 
		\[GW^{[i]}(X,\sigma_X, (\LC,\sigma_\LC)) := GW^{[i]}(\Ch^b (X), \quis,  \#^{\LC}_\sigma , \can^{\LC}_\sigma )\]
		for every $i \in \Z$. The \textit{Witt group} of $(X,\sigma_X)$ with respect to $(\LC,\sigma_\LC)$ is defined as
		\[W^{i}(X,\sigma_X,(\LC,\sigma_\LC)) := W^{i}(\D^b (X),   \#^{\LC}_\sigma , \can^{\LC}_\sigma ). \]
		Let $Z$ be an invariant closed subscheme of $X$. We can also define 
		\[ GW^{[i]}_Z(X,\sigma_X, (\LC,\sigma_\LC)) := GW^{[i]}(\Ch^b_Z (X), \quis,  \#^{\LC}_\sigma , \can^{\LC}_\sigma ) \]
		and
		\[ W^{i}_Z(X,\sigma_X, (\LC,\sigma_\LC)) := W^{i}(\D^b_Z(X),   \#^{\LC}_\sigma , \can^{\LC}_\sigma ). \]
	\end{Def}
	\subsection{Dualizing complex via a finite morphism} Let $(Z,\sigma_Z)$ be a scheme with involution, and let $\pi:Z \rightarrow X$ be a finite morphism. Assume that $\pi$ is compatible with the involutions on $Z$ and $X$, i.e., the following diagram commutes
	\[ \xymatrix{ Z \ar[d]_-{\sigma_Z}\ar[r]^-{\pi} & X \ar[d]_-{\sigma_X} \\
		Z \ar[r]^-{\pi} & X.} \] 
	
	Following Hartshorne \cite{Ha66}, we write
	\[\pi^\flat I^\bullet := \bar{\pi}^*  \sHom_{\SO_{X}}(\pi_*\SO_Z , I^\bullet  ),\]
	where $\bar{\pi} : (Z,\sigma_Z) \rightarrow (X,\pi_*\SO_Z) =: \bar{X} $ is the canonical flat map of ringed spaces. If $I^\bullet$ is  a dualizing complex, then $\pi^\flat I^\bullet$ is also a dualizing complex \cite[Section 2.4]{Gil07a}. Note that $\sigma_X: X\rightarrow X$ induces a canonical involution $\sigma_{\bar{X}}: \bar{X} \rightarrow \bar{X}$ defined on the topological spaces, as well as $\pi_* \sigma_Z: \pi_*\SO_Z \rightarrow \sigma_{X*} \pi_* \SO_Z = \pi_* \sigma_{Z*} \SO_Z $ on the sheaves of rings. Then, we can define an involution $\sigma_{\pi^\flat I^\bullet} : \pi^\flat I^\bullet \rightarrow \sigma_{Z*} \pi^\flat I^\bullet$ on $\pi^\flat I^\bullet$ through the following composition
	\[  \arraycolsep=1.7pt\def\arraystretch{1.7}
	\begin{array}{lclcllllllll}
	\pi^\flat I^\bullet = \bar{\pi}^*  \sHom_{\SO_{X}}(\pi_*\SO_Z , I^\bullet  )  &\stackrel{[\sigma_Z,1]}\longrightarrow&  \bar{\pi}^*  \sHom_{\SO_{X}}(\pi_* \sigma_{Z*} \SO_Z , I^\bullet  )  &\longrightarrow&  \bar{\pi}^*   \sHom_{\SO_{X}}( \sigma_{X*} \pi_* \SO_Z , I^\bullet  ) &\stackrel{[1, \sigma_{I}]}\longrightarrow& \\
	\bar{\pi}^*   \sHom_{\SO_{X}}( \sigma_{X*} \pi_* \SO_Z , \sigma_{X*} I^\bullet  )  &\longrightarrow&\bar{\pi}^*  \sigma_{X*}   \sHom_{\SO_{X}}( \pi_* \SO_Z , I^\bullet  )  &=&   \bar{\pi}^*  \sigma_{\bar{X}*}   \sHom_{\SO_{X}}( \pi_* \SO_Z , I^\bullet  )  &\longrightarrow& \\
	\sigma_{Z*}  \bar{\pi}^*    \sHom_{\SO_{X}}( \pi_* \SO_Z , I^\bullet  ) &= & \sigma_{Z*} \pi^\flat I^\bullet.
	\end{array} \]
	It follows that we obtain a dualizing complex with involution $( \pi^\flat I^\bullet,\sigma_{\pi^\flat I^\bullet} )$ on $Z$. Thus, we have a well-defined spectrum $GW^{[i]} (Z,\sigma_Z, (\pi^{\flat} I^\bullet, \sigma_{\pi^{\flat}I}) ) $ for every $i \in \Z$.
	
	Note that if $Z = \Spec(B)$ and $X = \Spec(A) $ are affine, then $\Hom_A(B,I)$ has a structure of $B$-module by defining the action $b\cdot f $ as $b \cdot f : m \mapsto f(bm)$ for $b , m \in B$.\ 
	The involution $\sigma_{\pi^\flat I^\bullet} : \pi^\flat I^\bullet \rightarrow \sigma_{Z*} \pi^\flat I^\bullet$ is precisely the map of $B$-modules 
	\[ \sigma_{B,I}^A : \Hom_A(B,I) \rightarrow \Hom_A(B,I)^{\op_B} : f \mapsto \sigma_I f \sigma_B\].

	
	

	\section{The transfer morphism}
	
	\begin{theo}\label{thm:transfer}
	Let $(X,\sigma_X)$ and $(Z, \sigma_Z)$ be schemes with involution and  with $\frac{1}{2}$ in their global sections.  Suppose that $(X,\sigma_X)$ has a dualizing complex with involution $(I,\sigma_I)$ (cf. Definition \ref{Dwin}).\ If $ \pi: Z \rightarrow X$ is  a finite morphism of schemes with involution, then the direct image functor $\pi_*: \Ch^b_c(\Q(Z)) \rightarrow \Ch^b_c(\Q(X))  $ induces a map of spectra
\[T_{Z/X} : GW^{[i]} (Z,\sigma_Z, (\pi^{\flat} I^\bullet, \sigma_{\pi^{\flat}I}) ) \rightarrow GW^{[i]}(X,\sigma_X, (I^\bullet, \sigma_I) ). \] 
	\end{theo}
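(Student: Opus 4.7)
The plan is to exhibit $\pi_*$ as a duality-preserving dg functor between the dg categories with weak equivalences and duality $(\Ch^b_c(\Q(Z)), \quis, \#^{\pi^\flat I^\bullet}_{\sigma_Z}, \can^{\pi^\flat I^\bullet}_{\sigma_Z})$ and $(\Ch^b_c(\Q(X)), \quis, \#^{I^\bullet}_{\sigma_X}, \can^{I^\bullet}_{\sigma_X})$, and then to invoke the functoriality of Schlichting's Grothendieck--Witt spectrum construction from \cite{Sch17} to produce the map $T_{Z/X}$ for each $i \in \Z$.

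The key ingredient is a natural isomorphism
\[ \eta_M: \pi_*\sHom_{\SO_Z}(M, \pi^\flat I^\bullet) \stackrel{\sim}\longrightarrow \sHom_{\SO_X}(\pi_* M, I^\bullet) \]
in $M \in \Ch^b_c(\Q(Z))$. Unwinding the definition $\pi^\flat I^\bullet = \bar{\pi}^* \sHom_{\SO_X}(\pi_*\SO_Z, I^\bullet)$, this is the usual finite coherent-duality adjunction: in the affine case $(X,Z) = (\Spec A, \Spec B)$ it is nothing but the tautological identification $\Hom_B(M, \Hom_A(B, I)) \cong \Hom_A(M, I)$ sending $f$ to $m \mapsto f(m)(1)$, and its globalization is standard (see \cite{Ha66} or \cite[Section 2.4]{Gil07a}). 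Because $\pi \sigma_Z = \sigma_X \pi$ we have $\pi_* \sigma_{Z*} = \sigma_{X*}\pi_*$, so substituting $\sigma_{Z*}M$ for $M$ in the source of $\eta$ yields a natural isomorphism
\[ \pi_* \circ \#^{\pi^\flat I^\bullet}_{\sigma_Z} \stackrel{\sim}\longrightarrow \#^{I^\bullet}_{\sigma_X}\circ \pi_* \]
of duality functors, which I still denote by $\eta$.

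The remaining and most delicate step is to verify that $(\pi_*, \eta)$ is compatible with the double dual identifications, that is, that the composite
\[ \pi_* M \stackrel{\pi_*(\can)}\longrightarrow \pi_* \bigl(\#^{\pi^\flat I^\bullet}_{\sigma_Z}\bigr)^{\circ 2} M \longrightarrow \bigl(\#^{I^\bullet}_{\sigma_X}\bigr)^{\circ 2} \pi_* M \]
obtained from $\pi_*\can^{\pi^\flat I^\bullet}_{\sigma_Z,M}$ followed by two applications of $\eta$ agrees with $\can^{I^\bullet}_{\sigma_X, \pi_* M}$. This is where the involution data $\sigma_I$ and $\sigma_{\pi^\flat I}$ interact with the classical double duality isomorphism, and it is the main obstacle. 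I would check it by passing to stalks and reducing to the affine case $(X, Z) = (\Spec A, \Spec B)$, where $\sigma_{\pi^\flat I}$ is explicitly the map $f \mapsto \sigma_I \circ f \circ \sigma_B$ as recorded at the end of Section \ref{sec:notation}. With these explicit formulas the diagram in question reduces to elementary manipulation of composites of the shape $\sigma_I(-) \sigma_B$ together with the evaluation pairing at $1 \in B$. Once this compatibility is established, $\pi_*$ is visibly an exact dg functor that preserves quasi-isomorphisms, so $(\pi_*, \eta)$ is a morphism of dg categories with weak equivalences and duality in the sense of \cite{Sch17}, and the stated map of spectra $T_{Z/X}$ follows immediately by functoriality of $GW^{[i]}$.
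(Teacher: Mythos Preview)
Your proposal is correct and follows the same architecture as the paper: construct the duality compatibility isomorphism $\eta$ from the finite coherent-duality adjunction, verify compatibility of $(\pi_*,\eta)$ with the double dual identifications, and then invoke functoriality of $GW^{[i]}$ from \cite{Sch17}. The only methodological difference is in the verification of the compatibility square: the paper carries out a global diagram chase (decomposing the square into two pieces $\square_1$, $\square_2$ and appealing to results from \cite{Ha66} on the trace map), whereas you propose to check it on stalks via the explicit affine formula $f\mapsto \sigma_I\circ f\circ \sigma_B$. Both routes are valid; yours is more elementary and hands-on, while the paper's is more structural and makes the role of the trace $\xi:\pi_*\pi^\flat I^\bullet\to I^\bullet$ transparent.
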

	
	\begin{Def}
		The map $T_{Z/X} : GW^{[i]} (Z,\sigma_Z, (\pi^{\flat} I^\bullet, \sigma_{\pi^{\flat}I}) ) \rightarrow GW^{[i]}(X,\sigma_X, (I^\bullet, \sigma_I) )$ in the above theorem is called the \textit{transfer morphism}. 
	\end{Def}
	\begin{proof}[Proof of Theorem \ref{thm:transfer}]
		The transfer map $T_{Z/X} : GW^{[i]} (Z,\sigma_Z, (\pi^{\flat} I^\bullet, \sigma_{\pi^{\flat}I}) ) \rightarrow GW^{[i]}(X,\sigma_X, (I^\bullet, \sigma_I) )$  is induced by the duality preserving functor given below in Lemma \ref{pushfo}.
	\end{proof}
	\begin{lem}\label{pushfo}
		Under the hypothesis of Theorem \ref{thm:transfer}, the direct image functor induces a duality preserving functor \[ \pi_*: (\Ch^b_c(\Q(Z)),  \#^{\pi^\flat I^\bullet}_{\sigma_Z} ) \longrightarrow (\Ch^b_c(\Q(X)),  \#^{I^\bullet}_{\sigma_X} ) \] with 
		the duality compatibility natural transformation 
		\[ \eta: \pi_* \circ  \#^{\pi^\flat I^\bullet}_{\sigma_Z} \longrightarrow   \#^{I^\bullet}_{\sigma_X}  \circ \pi_*, \] 
		given by the following composition of isomorphisms
		\[  \begin{CD} \eta_\GC : \pi_*  [ \sigma_{Z*} \GC ,  \pi^\flat I^\bullet  ]_{\SO_Z} @>\cong>> [  \pi_*  \sigma_{Z*} \GC , \pi_* \pi^\flat I^\bullet  ]_{\SO_X} \\  @>[p, 1]>\cong> [ \sigma_{X*} \pi_* \GC , \pi_* \pi^\flat I^\bullet  ]_{\SO_X} \\ @>[1, \xi]>\cong> [ \sigma_{X*} \pi_* \GC ,  I^\bullet  ]_{\SO_X}  \end{CD}  \]
		for $\GC \in \Ch^b_c(\Q(Z))$, where $ \xi: \pi_* \pi^\flat I^\bullet  \rightarrow I^\bullet $ is the evaluation at one\footnote{This map is called \textit{trace} in \cite{Ha66} and \cite{Gil07a}, but on affine schemes this map is in fact the evaluation at one.}.
	\end{lem}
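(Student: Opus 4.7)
The plan is to verify two things: first, that each of the three arrows composing $\eta_\GC$ is a natural isomorphism of complexes, and second, that the resulting $\eta$ satisfies the duality-preserving compatibility axiom
\[ \#^{I^\bullet}_{\sigma_X}(\eta_\GC) \circ \can^{I^\bullet}_{\sigma_X, \pi_*\GC} = \eta_{\#^{\pi^\flat I^\bullet}_{\sigma_Z}\GC} \circ \pi_*(\can^{\pi^\flat I^\bullet}_{\sigma_Z, \GC}). \]
Since $\pi$ is finite, hence affine, and all the constructions are natural with respect to affine open restriction, both claims reduce to the affine case: for $U = \Spec A \subseteq X$ and $\pi^{-1}(U) = \Spec B$ with $B$ finitely generated as an $A$-module, the complex $\pi^\flat I^\bullet|_{\pi^{-1}(U)}$ is computed by $\Hom_A(B, I^\bullet|_U)$ with involution $f \mapsto \sigma_I f \sigma_B$ as recalled at the end of Section~\ref{sec:notation}.

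For the three constituent arrows: (a) the first is the adjunction isomorphism specific to the finite morphism $\pi$, namely pushforward of $\sHom$ against a complex of the form $\pi^\flat I^\bullet$; (b) the second $[p,1]$ is induced by the canonical equality $\pi_*\sigma_{Z*} = \sigma_{X*}\pi_*$ coming from $\pi\sigma_Z = \sigma_X\pi$; (c) the third $[1,\xi]$ is post-composition with the trace $\xi \colon \pi_*\pi^\flat I^\bullet \to I^\bullet$, which affine-locally is the evaluation-at-one map. That the composite $\eta_\GC$ is an isomorphism is the classical form of Grothendieck duality for a finite morphism, which affine-locally is the adjunction
\[ \Hom_B(N, \Hom_A(B, I)) \xrightarrow{\sim} \Hom_A(N, I), \quad \varphi \mapsto \varphi(-)(1). \]
Naturality in $\GC$ is immediate from the construction.

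The substantive step is the duality-preserving compatibility. After reducing to an affine chart and unwinding the two $\can$ maps with their common sign convention $\can_E(x)(f) = (-1)^{|x||f|} f(x)$, the identity to check becomes a diagram chase whose key ingredient is the intertwining of the trace with the involutions,
\[ \sigma_{X*}(\xi) \circ \pi_*(\sigma_{\pi^\flat I^\bullet}) = \sigma_I \circ \xi \colon \pi_*\pi^\flat I^\bullet \to \sigma_{X*} I^\bullet. \]
This holds affine-locally because both sides send $f \in \Hom_A(B, I)$ to $\sigma_I(f(1))$, using $\sigma_B(1) = 1$. Together with the swap identity $\sigma_*\sHom(M, N) = \sHom(\sigma_* M, \sigma_* N)$, the intertwining carries each of the three intermediate isomorphisms appearing in the composition defining $\can^{\pi^\flat I^\bullet}_{\sigma_Z}$ onto the corresponding intermediate isomorphism of $\can^{I^\bullet}_{\sigma_X}$ under transport by $\pi_* \circ \eta$, and the sign $(-1)^{|x||f|}$ appears symmetrically on both sides.

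The main obstacle is the bookkeeping: the involution $\sigma_{\pi^\flat I^\bullet}$ is itself a six-term composition, so one must track carefully how each of its elementary building blocks (adjunction, swap, and $\sigma_I$) on the $Z$-side is carried by $\pi_* \circ \eta$ to the analogous block on the $X$-side. The process is formal rather than substantive, since every elementary piece of $\sigma_{\pi^\flat I^\bullet}$ is built from precisely the elementary pieces appearing in $\can^{I^\bullet}_{\sigma_X}$.
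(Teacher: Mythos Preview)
Your proposal is essentially correct and identifies the right core ingredient, but it is packaged differently from the paper's argument. The paper does not reduce to affine charts; it works globally with a large commutative diagram (their Figure~1), which it breaks into a square $\square_1$ coming from the trivial-involution case of the transfer (this is the classical compatibility of $\pi_*$ with $\can$, no $\sigma$ present) and a square $\square_2$ containing all of the involution-specific content. The latter is then reduced to two auxiliary diagrams, and the decisive one is precisely your trace-intertwining identity $\sigma_{X*}(\xi)\circ\pi_*(\sigma_{\pi^\flat I})=\sigma_I\circ\xi$, which the paper obtains as the ``back square'' of a cube whose other faces commute by \cite[Propositions~III.6.3 and III.6.6]{Ha66}, by the very definition of $\sigma_{\pi^\flat I}$, and by naturality of the trace. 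So you have located the heart of the matter; what the paper adds is an explicit decomposition showing that this identity, together with the trivial-duality case and naturality, really does close the full compatibility square.

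One imprecision worth flagging: your affine reduction writes ``$\pi^\flat I^\bullet|_{\pi^{-1}(U)}$ is computed by $\Hom_A(B,I^\bullet|_U)$ with involution $f\mapsto\sigma_I f\sigma_B$'', which implicitly assumes the affine open $U$ is $\sigma_X$-invariant. Such opens need not exist in general (the paper itself remarks on this obstruction in the introduction when discussing $\mathbb{A}^1$-invariance). This is harmless for your purposes---equality of two sheaf morphisms can be checked on any affine cover, and on a non-invariant $U$ one simply replaces the involution $\sigma_B\colon B\to B$ by the isomorphism $B\to B'$ onto the coordinate ring of $\sigma_X(U)$, exactly as the paper does later in the proof of Theorem~\ref{thm:Devissage}---but the formulas must be adjusted accordingly, and your elementwise check ``both sides send $f$ to $\sigma_I(f(1))$'' should be read in that light.
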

	\begin{proof}
		We need to show that for any $M \in \Ch^b_c(\Q(Z))$ the following diagram in $\Ch^b_c(\Q(X))$ is commutative.
		\[    \begin{CD}\pi_*  M  @>>> \pi_*(M^{\#^{\pi^\flat I^\bullet}_{\sigma_Z} \#^{\pi^\flat I^\bullet}_{\sigma_Z} } )     \\ 
		@VVV  @VVV \\
		(\pi_*M)^{\#^{I^\bullet}_{\sigma_X} \#^{I^\bullet}_{\sigma_X}}   @>>> (\pi_*M^{\#^{\pi^\flat I^\bullet}_{\sigma_Z}} )^{\#^{I^\bullet}_{\sigma_X}} 
		\end{CD}     \]  
		 We write down what it means. To simplify the notation, in the rest of the proof we will drop the bullet in $I^\bullet$ and write $I$ instead. 
		
		\[ \xymatrix{\pi_*  M  \ar[r]^-{\pi_* \can_M^\sigma} \ar[d]_-{\can_{\pi_*M}^\sigma} & \pi_*
		[\sigma_{Z*}[\sigma_{Z*}M, \pi^\flat I], \pi^\flat I] \ar[d]^-{\eta_{[\sigma_{Z*}M, \pi^\flat I]}} \\
		[\sigma_{X*}[ \sigma_{X*} (\pi_*M), I ], I] 
		\ar[r]^-{[\eta_M, I]} & [\sigma_{X_*} \pi_*[\sigma_{Z*}M, \pi^\flat I], I]
	       }\] 
	
We depict all the maps by definition in Figure \ref{fig:dulaity} below. The diagram $\square_1$ in  Figure \ref{fig:dulaity} is commutative by the case of trivial duality (since all the maps considered here are natural, we drop the labeling of maps). We check the diagram $\square_2$ in Figure \ref{fig:dulaity} is also commutative.  This can be checked by the following two commutative diagrams.
	\begin{equation}\label{cd:1}
	\xymatrix{\sigma_{X*} \pi_*   [\sigma_{Z*} M, \pi^\flat I] \ar[d] \ar[r]&  [\sigma_{X*} \pi_*  \sigma_{Z*} M, \sigma_{X*} \pi_*  \pi^\flat I] \ar[r] & [\pi_*  \sigma_{Z*} \sigma_{Z*} M, \sigma_{X*} \pi_*  \pi^\flat I] \ar[d] \\
	\pi_*  \sigma_{Z*} [\sigma_{Z*} M, \pi^\flat I] \ar[r] & 	 [ \pi_*  \sigma_{Z*} \sigma_{Z*} M,  \pi_*  \sigma_{Z*} \pi^\flat I] \ar[r] &  [ \pi_*  M,  \pi_*  \sigma_{Z*} \pi^\flat I] }
	\end{equation}
	\begin{equation}\label{cd:2}
	\xymatrix{ 
	&\sigma_{X*} \pi_*  \pi^\flat I \ar[dd] \ar[rr] && \sigma_{X*} I \ar[rr] && I \ar@{=}[dd]
	\\
	\pi_*  \pi^\flat \sigma_{X*}  I \ar@{=}[dd] \ar[ur]\ar[rr] &&  \sigma_{X*} I \ar[rr] && I \ar@{=}[dd] \ar@{=}[ur]
	\\
    &\pi_*  \sigma_{Z*} \pi^\flat I \ar[rr] && \pi_*  \pi^\flat I   \ar[rr] && I
    \\
    	\pi_*  \pi^\flat \sigma_{X*}  I \ar[ur]\ar[rr] &&  \pi_*  \pi^\flat I  \ar[rr] && I \ar@{=}[ur]
    }\end{equation}
\begin{figure}[]
 	\[ 	 
 	 \begin{turn}{90}
		 \xymatrix@C=1em@R=6em{
		 	\pi_*  M \ar[rr] \ar[dd] && \pi_*
			[[M, \pi^\flat I], \pi^\flat I] \ar[d] \ar[r] & \pi_* [[M, \sigma_{Z*} \pi^\flat I], \pi^\flat I]  \ar[r] \ar[d]& \pi_* [\sigma_{Z*} [\sigma_{Z*} M, \pi^\flat I], \pi^\flat I]  \ar[d]  
			\\ 
			& \square_1 & [ \pi_*[M, \pi^\flat I],\pi_*\pi^\flat I] \ar[d]\ar[r]&  [ \pi_*[M, \sigma_{Z*} \pi^\flat I],\pi_*\pi^\flat I] \ar[r] \ar[d]& [\pi_*  \sigma_{Z*} [\sigma_{Z*} M, \pi^\flat I], \pi_*  \pi^\flat I]  \ar[d]
			\\
			[[ \pi_*M,  I ], I]
		      \ar[r] \ar[d] & [[ \pi_*M,\pi_*\pi^\flat I ], I] \ar[r]      & [ \pi_*[M, \pi^\flat I],I] \ar[r]&  [ \pi_*[M, \sigma_{Z*} \pi^\flat I], I] \ar[r] & [\pi_*  \sigma_{Z*} [\sigma_{Z*} M, \pi^\flat I], I]  \ar[dd]
	      \\
	   [[ \pi_*M, \sigma_{X*}  I ], I] \ar[d] && \square_2
	   \\
	   [ \sigma_{X*} [ \sigma_{X*} \pi_*M,   I ], I] \ar[rr] & & [ \sigma_{X*}    [ \pi_*\sigma_{Z*}[M,  I], I]  \ar[r]  & [ \sigma_{X*}    [ \pi_*\sigma_{Z*} M, \pi_* \pi^\flat I], I] \ar[r] & [ \sigma_{X*} \pi_*   [\sigma_{Z*} M, \pi^\flat I], I]
        }\end{turn}	\] 
    		\caption{Proof of Lemma \ref{pushfo}}\label{fig:dulaity}
    \end{figure}

Diagram (\ref{cd:1}) is commutative for obvious reasons. To see the commutativity of Diagram (\ref{cd:2}), we observe that the upper diagram is commutative by a variant of \cite[Proposition 6.6 p170]{Ha66}. 
The bottom diagram in Diagram (\ref{cd:2}) is also commutative by the definition of $\pi^\flat I \rightarrow \sigma_{Z*}  \pi^\flat I $ in Section \ref{sec:notation}. The left square is commutative by \cite[Proposition 6.3]{Ha66}). The front square is commutative by the naturality of the trace. Therefore, the back square is commutative which is what we need. Now, we apply the functor $[-,I]$ to  Diagram (\ref{cd:1}) and the functor $[[\pi_*M, -], I]$ to the back square diagram of Diagram (\ref{cd:2}). Combining these two diagrams gives a new one, which  by naturality and functoriality can be identified with $\square_2$. 
The remaining four small squares in Figure \ref{fig:dulaity} are commutative by naturality. 
 	\end{proof}
\begin{remark}\label{afftran}
	Let $\pi:R \rightarrow S$ be a finite morphism of rings with involution. We have constructed a duality preserving functor \[ \pi_*: (\sPerf (S \hMod),  \#^{\pi^\flat I}_{\sigma_S} ) \longrightarrow (\sPerf(R \hMod), \#^{I}_{\sigma_R} ) \] with 
	the duality compatibility  natural transformation 
	\[ \eta: \pi_* \circ  \#^{\pi^\flat I} \longrightarrow   \#^{I}  \circ \pi_*, \] 
	which is induced by the isomorphism of $R$-modules
	\begin{equation}\label{eao}
	\Hom_{S}(M^{\op_S}, \Hom_R(S,I)) \rightarrow \Hom_R(M^{\op_R}, I) : f \mapsto (\xi \circ f): m^{\op_R} \mapsto f(m^{\op_S}) (1_S) \end{equation} 
	for $M \in S\hMod $.
\end{remark}

	\section{The local case and finite length modules}
	Let $(R, \mathfrak{m},k)$ be a local ring with an involution $\sigma_R$.\ Suppose that $(R,\sigma_R)$ has a minimal dualizing complex with involution $(I^\bullet,\sigma_I)$. Note that this is the case if $R$ is Gorenstein local (cf. Lemma  \ref{lem:goe-dualizingcomplex}).  Suppose also that $\mathfrak{m}$ is invariant under the involution of $R$, i.e. $\sigma_R(\mathfrak{m}) = \mathfrak{m}$. Therefore, the $\sigma_R$ on $R$ induces an involution $\sigma_k$ on $k$. In the previous section, we have defined a transfer morphism
	\[T_{\mathfrak{m}/R} : W^{i} (k,\sigma_{k}, (\pi^{\flat} I^\bullet, \sigma_{\pi^{\flat}I}) ) \rightarrow W^{i}(R,\sigma_R, (I^\bullet, \sigma_I) )\]
	which factors through
	\[D_{\mathfrak{m}/R} : W^{i} (k,\sigma_{k}, (\pi^{\flat} I^\bullet, \sigma_{\pi^{\flat}I}) ) \rightarrow W^{i}_{\mathfrak{m}}(R,\sigma_R, (I^\bullet, \sigma_I) ).\]
	Moreover, the map $D_{\mathfrak{m}/R} : W^{i} (k,\sigma_{k}, (\pi^{\flat} I^\bullet, \sigma_{\pi^{\flat}I}) ) \rightarrow W^{i}_{\mathfrak{m}}(R,\sigma_R, (I^\bullet, \sigma_I) )$ factors as
	\[  W^{i} (k,\sigma_{k}, (\pi^{\flat} I^\bullet, \sigma_{\pi^{\flat}I}) )  \rightarrow W^{i}(\M_{fl}(R),\sigma_R, (I^\bullet ,\sigma_I)) \rightarrow W^{i}_{\mathfrak{m}}(R,\sigma_R, (I^\bullet, \sigma_I) ) \]
	where the first map is induced by the functor $\pi_*: \D^b(\M(k)) \rightarrow \D^b(\M_{fl}(R) ) $ and the second map is induced by the inclusion $\D^b(\M_{fl}(R) ) \rightarrow \D^b_{fl}(R) = \D^b_{\mathfrak{m}}(R)$. 
	\begin{theo}\label{thm:devissagelocal}
		Let $(R, \mathfrak{m},k)$ be a local ring with an involution $\sigma_R$.\ Suppose that $I^\bullet$ is a minimal dualizing complex of $R$, $\sigma_{I}$ is an involution of $I^\bullet$ and $\mathfrak{m}$ is invariant under the involution of $R$. Then, there is an isomorphism of groups
		\[D_{\mathfrak{m}/R} : W^{i} (k,\sigma_{k}, (\pi^{\flat} I^\bullet, \sigma_{\pi^{\flat}I}) ) \rightarrow W^{i}_{\mathfrak{m}}(R,\sigma_R, (I^\bullet, \sigma_I) ).\]
	\end{theo}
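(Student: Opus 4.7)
The plan is to decompose $D_{\mathfrak{m}/R}$ using the factorization displayed just before the theorem statement and to show that each of the two arrows
\[ W^{i}(k, \sigma_k, (\pi^\flat I^\bullet, \sigma_{\pi^\flat I})) \xrightarrow{A} W^{i}(\M_{fl}(R), \sigma_R, (I^\bullet, \sigma_I)) \xrightarrow{B} W^{i}_{\mathfrak{m}}(R, \sigma_R, (I^\bullet, \sigma_I)) \]
is an isomorphism.

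For the map $B$, induced by the inclusion $\D^b(\M_{fl}(R)) \hookrightarrow \D^b_{\mathfrak{m}}(R)$, my strategy is to invoke the classical fact that for a Serre subcategory of an abelian category the bounded derived category of the subcategory is equivalent to the full subcategory of the ambient bounded derived category consisting of complexes whose cohomology lies in the subcategory. In our setup $\M_{fl}(R) = \M_{\mathfrak{m}}(R)$ is a Serre subcategory of $R\hMod$, and $\D^b_{fl}(R) = \D^b_{\mathfrak{m}}(R)$ by definition. The duality $\#^{I^\bullet}_{\sigma_R}$ is given by exactly the same formula on both sides, so the resulting equivalence is duality preserving with $\sigma_R$-compatible double dual identification, and it therefore induces an isomorphism of Witt groups.

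For the map $A$, induced by $\pi_*\colon \D^b(\M(k)) \to \D^b(\M_{fl}(R))$, the plan is to adapt Gille's d\'{e}vissage argument for Witt groups of triangulated categories with duality \cite{Gil07a} to the present involution setting. The crucial input is the minimality of $I^\bullet$: by local duality $\pi^\flat I^\bullet$ is then quasi-isomorphic to $k$ concentrated in a single degree (the depth of $R$), so the duality $\#^{\pi^\flat I^\bullet}_{\sigma_k}$ on $\D^b(\M(k))$ reduces to shifted $k$-vector space duality twisted by $\sigma_k$. D\'{e}vissage then exploits the fact that every finite length $R$-module admits a composition series whose successive quotients are isomorphic to $k = R/\mathfrak{m}$: given a symmetric space on a complex in $\D^b(\M_{fl}(R))$, one reduces it inductively along the socle filtration to a symmetric form on a complex of $k$-vector spaces, and handles Lagrangians analogously in order to deduce injectivity.

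The main obstacle will be verifying that this d\'{e}vissage induction, originally carried out for trivial duality, transfers to the involution case. Concretely, one must ensure that the socle filtration and the metabolic reductions used in the induction can be chosen compatibly with the $\sigma_R$-action. This is possible precisely because $\mathfrak{m}$ is assumed $\sigma_R$-invariant, so the socle filtration is automatically $\sigma_R$-equivariant; the interaction between $\sigma_I$, $\sigma_{\pi^\flat I}$ and the duality compatibility isomorphism $\eta$ of Lemma \ref{pushfo} then guarantees that the induced symmetric forms on the graded pieces carry the correct duality-with-involution structure. The hypothesis $\frac{1}{2} \in R$ removes any sign obstruction in the standard Witt-theoretic reductions.
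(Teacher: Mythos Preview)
Your factorization into maps $A$ and $B$ matches the paper exactly, and your treatment of $B$ via the equivalence $\D^b(\M_{fl}(R)) \simeq \D^b_{\mathfrak{m}}(R)$ is precisely what the paper does (citing \cite[Section 1.15]{Ke99}).

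For the map $A$ the paper takes a more structured route than a direct socle-filtration induction at the derived level. Rather than running d\'evissage on complexes, it first reduces \emph{both} sides to ordinary (non-derived) Witt groups: setting $E := I^n$, the top term of the minimal dualizing complex, it builds a commutative square
\[
\xymatrix{
W^{i+n}\big(k, \sigma_k, (\pi^\flat E, \sigma_{\pi^\flat E})\big) \ar[r]^-{h} \ar[d]^-{\cong}_-{v_1} & W^{i+n}\big(\M_{fl}(R), \sigma_R, (E, \sigma_E)\big) \ar[d]^-{\cong}_-{v_2} \\
W^{i}\big(k, \sigma_k, (\pi^\flat I^\bullet, \sigma_{\pi^\flat I})\big) \ar[r]^-{\pi_*} & W^{i}\big(\M_{fl}(R), \sigma_R, (I^\bullet, \sigma_I)\big)
}
\]
in which $v_1,v_2$ are isomorphisms because $\Hom_R(M, I^j) = 0$ for every $M$ with $\mathfrak{m}M = 0$ and every $j \neq n$ (this uses the structure of a minimal dualizing complex together with \cite[Lemma 3.3(5)]{Gil02}). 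The top row is then the transfer between \emph{abelian} categories with exact duality, which is an isomorphism for $i+n$ even by the classical d\'evissage of \cite{QSS79} and \cite[Proposition 5.1]{BW02}, while for $i+n$ odd both groups vanish by \cite[Proposition 5.2]{BW02}.

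Your sketch effectively carries out the reduction $v_1$ (you observe that $\pi^\flat I^\bullet$ is concentrated in one degree) but never performs the companion reduction $v_2$ on the $\M_{fl}(R)$ side; without it, a composition-series argument does not apply directly to symmetric spaces supported on genuine complexes. This is not fatal, and your approach can be completed along these lines, but the paper's reduce-then-cite strategy is cleaner: it offloads the inductive work to established abelian results and gets the odd-shift vanishing for free. (Minor point: the degree in which $\pi^\flat I^\bullet$ is concentrated is the top index $n$ of the minimal dualizing complex, which need not equal the depth of $R$ unless $R$ is Cohen--Macaulay.)
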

	
	\begin{lem}
		Under the assumption of Theorem \ref{thm:devissagelocal}, the natural map \[W^{i}(\M_{fl}(R), \sigma_R, (I^\bullet ,\sigma_I)) \rightarrow W^{i}_{\mathfrak{m}}(R,\sigma_R, (I^\bullet, \sigma_I))\] is an isomorphism of groups.
	\end{lem}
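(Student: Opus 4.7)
The plan is to identify the natural map as one induced by a duality-preserving equivalence of triangulated categories with duality, and then appeal to the functoriality of Balmer's triangulated Witt groups. Specifically, the inclusion $\M_{fl}(R) \hookrightarrow R\hMod$ induces a natural triangulated functor
\[\iota: \D^b(\M_{fl}(R)) \longrightarrow \D^b_{fl}(R) = \D^b_{\mathfrak{m}}(R),\]
and the map on Witt groups in the lemma is induced by $\iota$. It therefore suffices to show that $\iota$ is an equivalence of triangulated categories and that it is compatible with the dualities defined by $(I^\bullet, \sigma_I)$ on both sides.

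For the first step, $\M_{fl}(R)$ is a Serre subcategory of $R\hMod$ (closed under subobjects, quotients, and extensions), and it is a classical fact that for such subcategories the functor $\iota$ is fully faithful. Essential surjectivity can be deduced by observing that any $M^\bullet \in \D^b_{\mathfrak{m}}(R)$ is a finite iterated extension (in $\D^b_c(R\hMod)$) of its shifted cohomology objects $H^i(M^\bullet)[-i]$, each of which lies tautologically in $\D^b(\M_{fl}(R))$; since the mapping cone of a morphism between bounded complexes of finite length modules is again a bounded complex of finite length modules, the iterated extensions remain in the image of $\iota$.

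For the second step, I would verify that the duality $\#^{I^\bullet}_{\sigma_R} = \sHom_R(\sigma_{R*}(-), I^\bullet)$ restricts to $\D^b(\M_{fl}(R))$. Since $I^\bullet$ is a minimal dualizing complex on the local ring $R$, each term $I^p$ decomposes as a direct sum of injective hulls indexed by primes of a fixed height, and only the summand at $\mathfrak{m}$ contributes to $\Hom_R(M, I^p)$ for $M \in \M_{fl}(R)$, yielding again a finite length module. The restricted duality on $\D^b(\M_{fl}(R))$ is then intertwined by $\iota$ through the identity natural transformation, and the double dual identification $\can^{I^\bullet}_{\sigma_R}$ together with the twist by $\sigma_I$ is given by the same formula on both sides.

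The main obstacle lies in the essential surjectivity part of the first step: while the iterated-extension argument is the well-known derived dévissage underlying \cite{BW02} and \cite{Gil07a}, one must take care that the replacements preserve boundedness and stay inside $\M_{fl}(R)$. Once the triangulated equivalence is established, the verification of duality compatibility is a routine unwinding of the definitions, and the isomorphism on Witt groups then follows from the functoriality of $W^i$ applied to the duality-preserving equivalence $\iota$.
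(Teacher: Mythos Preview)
Your approach is essentially the same as the paper's: the paper's entire proof is the single observation that the inclusion $\D^b(\M_{fl}(R)) \to \D^b_{fl}(R) = \D^b_{\mathfrak m}(R)$ is an equivalence, citing \cite[Section 1.15]{Ke99} rather than spelling out the Serre-subcategory/truncation argument you give. Your extra verification that the duality $\#^{I^\bullet}_{\sigma_R}$ restricts to $\D^b(\M_{fl}(R))$ and is intertwined by $\iota$ is left implicit in the paper, so your write-up is in fact more complete.
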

	\begin{proof}
		By \cite[Section 1.15]{Ke99} the inclusion 
		\[\D^b(\M_{fl}(R) ) \rightarrow \D^b_{fl}(R) = \D^b_{\mathfrak{m}}(R)\]
		is an equivalence of categories.
	\end{proof}
	\begin{lem}\label{lem:localdevissage}
	Under the assumption of Theorem \ref{thm:devissagelocal},
		the map \[\pi_*: W^{i} (k,\sigma_k, (\pi^{\flat} I^\bullet, \sigma_{\pi^{\flat}I}) ) \rightarrow W^{i}(\M_{fl}(R),\sigma_R, (I^\bullet ,\sigma_I))  \] is an isomorphism.
	\end{lem}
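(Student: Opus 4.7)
The plan is to apply a triangulated dévissage for Witt groups in the style of Gille \cite{Gil07a}, carefully tracking the non-trivial involution. First, I would use the minimality of $I^\bullet$ together with the invariance of $\mathfrak{m}$ to identify $\pi^\flat I^\bullet$ up to quasi-isomorphism with $k[-n]$ for the appropriate shift $n$ (the depth of $R$), and to check that the involution $\sigma_{\pi^\flat I}$ constructed in Section \ref{sec:notation} corresponds, under this identification, to $\sigma_k$ up to the sign introduced by the double-dual convention of Definition \ref{D}. This reduces the left-hand Witt group to a shifted copy of the classical $\sigma_k$-twisted Witt group of the residue field $k$, and makes the duality compatibility of Lemma \ref{pushfo} explicit on the essential image of $\pi_* : \D^b(\M(k)) \to \D^b(\M_{fl}(R))$.

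The next step is the dévissage itself. The exact inclusion $\M(k) \hookrightarrow \M_{fl}(R)$ is a Serre subcategory: subobjects and quotients of $k$-vector spaces, viewed as $R$-modules, remain $k$-vector spaces. Moreover, since $R$ is local with residue field $k$, every finite-length $R$-module admits a composition series all of whose simple subquotients are isomorphic to $k$. Hence $\D^b(\M_{fl}(R))$ is thickly generated by the essential image of $\pi_*$. Combined with the duality-preserving structure provided by Lemma \ref{pushfo}, the triangulated Witt group dévissage theorem of Gille (cf.\ \cite{Gil03}, \cite{Gil07a}) then implies that $\pi_*$ induces an isomorphism on Witt groups.

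The principal technical obstacle will be verifying the two compatibilities required to apply this dévissage. First, one must show that the duality $\#^{I^\bullet}_{\sigma_R}$ on $\D^b(\M_{fl}(R))$ genuinely sends the essential image of $\pi_*$ into itself, i.e., that the dual of a $k$-vector space is again quasi-isomorphic to a $k$-vector space. Second, one must verify that the explicit identification $\pi^\flat I^\bullet \simeq k[-n]$ respects both the involution $\sigma_{\pi^\flat I}$ and the canonical double-dual identification $\can^{I^\bullet}_{\sigma_R}$; this is delicate precisely because, as emphasized in Definition \ref{D} and the surrounding discussion, the non-trivial involution forces a sign into $\can^{I^\bullet}_{\sigma}$ that must be tracked consistently through the dévissage. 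Both compatibilities depend crucially on the minimality hypothesis on $I^\bullet$; without it, $\pi^\flat I^\bullet$ would not concentrate in a single degree, and the clean reduction to a classical Witt group of $k$ would break down.
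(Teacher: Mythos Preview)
Your plan is essentially the paper's: both concentrate $\pi^\flat I^\bullet$ in a single degree using the minimality of $I^\bullet$, and then reduce to a composition-series d\'evissage for $\M(k)\hookrightarrow\M_{fl}(R)$. The paper, however, is more explicit about where that d\'evissage actually lives. It sets $E:=I^n$ (the top term of the minimal complex, so $E\cong E_R(k)$), observes that Matlis duality $\Hom_R(-,E)$ is an \emph{exact} duality on $\M_{fl}(R)$, and then invokes the exact/abelian d\'evissage of \cite{QSS79}, \cite[Prop.~5.1]{BW02}, \cite[Thm.~6.1]{Sch10a}; the passage back to the derived Witt groups with coefficients $I^\bullet$ is handled by a separate commutative square, together with the vanishing of odd-shifted Witt groups of an abelian category \cite[Prop.~5.2]{BW02}. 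Your appeal to a ``triangulated d\'evissage theorem of Gille'' is imprecise on this point: thick generation of $\D^b(\M_{fl}(R))$ by $k$ is not by itself a Witt-group isomorphism criterion, and Gille's own local step in \cite{Gil07a} already reduces to exactly the abelian statement above. One minor correction: the shift $n$ is the top degree of the minimal dualizing complex (so $n-m=\dim R$ via the codimension function $\mu_I$), not the depth of $R$.
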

	\begin{proof} Let $I^\bullet: = 
		( \cdots \rightarrow 0 \rightarrow I^m \rightarrow I^{m+1} \rightarrow \cdots \rightarrow I^n \rightarrow 0 \rightarrow \cdots) $
		be a minimal dualizing complex on $R$. 
		We set $E : = I^n $ and let $\pi^\flat E $ be the $k$-module $ \Hom_R(k,E)$. Let 
		\[\sigma_{\pi^\flat E } :  \Hom_R(k,E) \rightarrow \Hom_R(k,E)^{\op_k}: f \mapsto \sigma_{\pi^\flat E } (f) \] be the involution on $\pi^\flat E$, where  $  \sigma_{\pi^\flat E } (f) (a) =  \sigma_E  f \sigma_k(a)$. Recall that $\sigma_{\pi^\flat E }$ is well-defined, and $(\pi^\flat E, \sigma_{\pi^\flat E }  )$ is a duality coefficient (cf. Definition \ref{Def:dualitycoefficient}) for $k\hMod$ (see Lemma \ref{lem: RSf}). Therefore, it induces a category with duality $(k\hMod, \#^{\pi^\flat E}_{\sigma_k})$ (recall Remark \ref{rmk:affineiden}). Moreover, the double dual identification 
		\[ \can_{\sigma_k, M}^{\pi^\flat E }: M \rightarrow \Hom_k(\Hom_k(M^{\op_k}, \pi^\flat E)^{\op_k}, \pi^\flat E): m \mapsto ( \can(m): f \mapsto \sigma_{\pi^\flat E} (f(m)) ) \]  is an isomorphism, therefore the duality is strong in the sense of \cite{Sch10b}.
		
		Now, consider the following commutative diagram
		\[ \xymatrix{ W^{i+n}\big(k,\sigma_k,(\pi^\flat E, \sigma_{\pi^\flat E})\big) \ar[r]^-{\cong}_-{h}  \ar[d]^-{\cong}_-{v_1} & W^{i+n}(\M_{fl}(R),\sigma_R, (E, \sigma_E)) \ar[d]^-{\cong}_-{v_2}\ \  \\
			W^{i} (k,\sigma_k, (\pi^{\flat} I^\bullet, \sigma_{\pi^{\flat}I}) )    \ar[r]_-{\pi_*} & W^{i}(\M_{fl}(R),\sigma_R, (I^\bullet ,\sigma_I))\ \ .  } \]
		Here the map $h$ is the transfer morphism, and by \cite[Proposition 5.1]{BW02} it is an isomorphism for $i+n$ even, see also \cite[Corollary 6.9 and Theorem 6.10]{QSS79} or  (\cite[Theorem 6.1]{Sch10a}). Note that the categories involved in the diagram above are all  derived categories of abelian categories, and if $i+n$ is even, the derived Witt groups can be identified with the usual Witt groups of symmetric or skew-symmetric forms. If $i+n$ is odd by \cite[Proposition 5.2]{BW02} one has
		\[ W^{i+n}\big(k,\sigma_k,(\pi^\flat E, \sigma_{\pi^\flat E})\big) = W^{i+n}(\M_{fl}(R),\sigma_R, (E, \sigma_E)) = 0.\]
		The map $ v_1 $ is induced by the duality preserving functor
		\[(\id, \theta) :  (\D^b k , \#_{\sigma_k}^{\pi^{\flat}E[n]} ) \rightarrow ( \D^b k, \#^{\pi^{\flat} I^\bullet}_{\sigma_k} ), \] 
		where $\theta: \#_{\sigma_k}^{\pi^{\flat}E[n]} \rightarrow \#^{\pi^{\flat} I^\bullet}_{\sigma_k} $ is the natural isomorphism given by the isomorphism of complexes
		\[ \theta_M :  \Hom_k(M^{\op_k}, \Hom_R(k,E[n]) )  \rightarrow \Hom_k(M^{\op_k}, \Hom_R(k,I^\bullet) )  \] 
		which is induced by the inclusion of complexes $ E[n] \rightarrow I^\bullet$. Notice that $ \theta_M$ is an isomorphism, because as an $R$-module
		\[  \Hom_k(M^{\op_k}, \Hom_R(k,I^j) ) \cong \Hom_R (M^{\op_R}, I^j )=0 \] 
		if $j \neq n$. The first isomorphism follows by the isomorphism $\eta$ in (\ref{eao}) and the second identity can be proved by \cite[Lemma 3.3]{Gil02}. Indeed, the local ring $R$ is assumed to be Gorenstein in \cite[Lemma 3.3 (1)-(4)]{Gil02}. We need to explain why \cite[Lemma 3.3]{Gil02} can be applied to our situation that the local ring $R$ only assumed to have a minimal dualizing complex. Note that there is an isomorphism $I^j \cong \oplus_{\mu_I(Q) = j} E_R(R/Q) $ by \cite[Theorem 1.15]{Gil07a} where $\mu_I: \Spec~R \rightarrow \Z$ is the codimension function of the dualizing complex $\I^\bullet$ defined after \cite[Lemma 1.12]{Gil07a}. By \cite[Lemma 1.14]{Gil07a}, the maximal ideal $\mathfrak{m}$ is the only prime ideal $Q$ such that $\mu_I(Q) =n$.  For any $R$-module $M$ such that $ \mathfrak{m}M =0$, we conclude that $\Hom_R(M, E_R(R/Q)) = 0$ if $Q \subsetneq \mathfrak{m} $ (i.e. $\mu_I (Q) < n$) by  \cite[Lemma 3.3 (5)]{Gil02} where the local ring is only assumed to be commutative Noetherian.\ 
		
		The isomorphism $v_2$ is defined analogously to the map $v_1$.
	\end{proof}
	\begin{remark}
	  For regular local rings Lemma \ref{lem:localdevissage}  was proved by \cite[Theorem 4.5]{Gil09}. The situation in \cite[Section 3.1]{Gil07a} is very similar to Lemma \ref{lem:localdevissage}, but in the construction of the transfer map we avoid the choice of an embedding 
	  of $k$ into its injective hull $E = E_R(k)$ over $R$.  
	\end{remark}
	\begin{lem}\label{localcase}
		Let $(R, \mathfrak{m},k)$ be a local ring with involution and $\frac{1}{2} \in R$.  Assume that $I^\bullet$ is a minimal dualizing complex of $R$, $\sigma_{I}$ is an involution of $I^\bullet$ and  $\mathfrak{m}$ is invariant under the involution of $R$. Let $J$ be an invariant ideal of $R$ and consider the following canonical projections
		\[ \xymatrix{ R \ar[d]^-{p} \ar[r]^-{\pi} & k \\
			R/J \ar[ur]_-{q}&\ \ . } \]
		We have a commutative diagram
		\begin{equation}\label{WmJ} \xymatrix{  
			W^{i} (k,\sigma_k, (q^{\flat} p^{\flat} I^\bullet, \sigma_{ q^{\flat} p^{\flat}I}) )   \ar[d]^-{\cong}   \ar[r]^-{q_*} & W^{i}_{\mathfrak{m}/J}(R/J,\sigma_{R/J}, (p^\flat I^\bullet, \sigma_{p^{\flat}I} ))\ar[d]^-{p_*}   \\
			W^{i} (k,\sigma_k, (\pi^{\flat} I^\bullet, \sigma_{\pi^{\flat}I}) )    \ar[r]^-{\pi_*} & W^{i}_{\mathfrak{m}}(R,\sigma_R, (I^\bullet, \sigma_I))     } \end{equation}
		Therefore, the map $p_*$ is an isomorphism
	\end{lem}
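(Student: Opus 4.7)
The plan is to show the diagram commutes by invoking the functoriality of the transfer along composable finite ring maps, and then deduce that $p_*$ is an isomorphism from Theorem \ref{thm:devissagelocal} via a four-arrow chase.

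First I would verify the hypotheses of Theorem \ref{thm:devissagelocal} for the quotient $(R/J, \mathfrak{m}/J, k)$. It is a local ring since $J \subseteq \mathfrak{m}$, and its maximal ideal $\mathfrak{m}/J$ is invariant under the involution induced by $\sigma_R$ because both $J$ and $\mathfrak{m}$ are invariant. The complex $p^\flat I^\bullet$ is a dualizing complex on $R/J$ by the general theory of $\pi^\flat$ recalled in Section \ref{sec:notation} (cf.\ \cite{Gil07a}); if it fails to be minimal, it is quasi-isomorphic to its minimal model, and up to this identification we obtain a minimal dualizing complex with involution $(p^\flat I^\bullet, \sigma_{p^\flat I})$ on $R/J$. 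Theorem \ref{thm:devissagelocal} then gives that both the top horizontal $q_*$ and the bottom horizontal $\pi_*$ in diagram (\ref{WmJ}) are isomorphisms.

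Next I would establish the commutativity of the square. The key ingredient is the functoriality of the transfer along composable finite ring maps: for $R \xrightarrow{p} R/J \xrightarrow{q} k$, there is a canonical isomorphism of duality coefficients $(qp)^\flat I^\bullet \cong p^\flat q^\flat I^\bullet$, which intertwines the involutions $\sigma_{\pi^\flat I}$ and $\sigma_{q^\flat p^\flat I}$ constructed in Section \ref{sec:notation}. Under this identification, the duality preserving functor $(qp)_*$ of Lemma \ref{pushfo} agrees, together with its duality compatibility natural transformation $\eta$, with the composition $p_* \circ q_*$. The left vertical arrow in diagram (\ref{WmJ}) is precisely this canonical identification, promoted to an isomorphism on Witt groups via Lemma \ref{II'}. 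Commutativity of the square is then formal from the equality $\pi_* = p_* \circ q_*$ of duality preserving functors. Since three sides of the square are isomorphisms, $p_*$ is also an isomorphism.

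The main obstacle is verifying the compatibility of the transfer with composition in the involution-equivariant setting. At the level of the evaluation-at-one trace maps $\xi: \pi_* \pi^\flat I^\bullet \to I^\bullet$, the non-equivariant version is classical (cf.\ \cite[Chapter III, Proposition 6.2]{Ha66}): the trace of $qp$ factors as $\xi_p \circ p_*(\xi_q)$ after identifying $(qp)^\flat \cong p^\flat q^\flat$. The equivariant refinement then follows by chasing the definition of $\sigma_{\pi^\flat I}$ and checking that it is transported by this same identification to the composite $\sigma_{p^\flat q^\flat I}$; this is a naturality argument, structurally parallel to the verification of diagrams (\ref{cd:1}) and (\ref{cd:2}) in the proof of Lemma \ref{pushfo}, but no genuinely new idea is required.
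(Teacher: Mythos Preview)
Your proposal is correct and follows essentially the same route as the paper: the paper writes down the explicit isomorphism $\gamma:\Hom_{R/J}(k,\Hom_R(R/J,I))\to\Hom_R(k,I)$, $f\mapsto(a\mapsto f(a)(1))$ (citing \cite[III, Proposition~6.2]{Ha66}), uses it to identify $q^\flat p^\flat I^\bullet\cong\pi^\flat I^\bullet$, and then observes that the square of triangulated categories with duality commutes, exactly your functoriality-of-transfer argument. One slip to fix: you wrote $(qp)^\flat I^\bullet\cong p^\flat q^\flat I^\bullet$, but since $\pi=q\circ p$ as ring maps the correct order is $q^\flat p^\flat I^\bullet$ (as already appears in the statement of the lemma); also, $p^\flat I^\bullet$ is in fact already minimal when $I^\bullet$ is, so the passage to a minimal model is unnecessary.
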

	
	\begin{proof}
		Define a map $ \gamma: q^{\flat} p^{\flat} I  \rightarrow  \pi^\flat I  $ 
		\[ \gamma: \Hom_{R/J}(k, \Hom_R(R/J, I )) \rightarrow \Hom_R(k,I)  \]
		by sending
		$ f: k\rightarrow \Hom_R(R/J, I )  $ to $ \gamma(f): a \mapsto f(a)(1) $. This map is an isomorphism, cf. \cite[Proposition 6.2 p166]{Ha66}. 
		The commutativity of Diagram (\ref{WmJ}) follows immediately from the following commutative diagram of triangulated categories with duality
		\[ \xymatrix{  ( \D^b k,\#_{\sigma_k}^{q^{\flat} p^{\flat}I  }  )   \ar[d]     \ar[r]^-{q_*} &  (\D^b_{\mathfrak{m}/J} (R/J), \#_{\sigma_{R/J}}^{p^{\flat}I} )  \ar[d]^-{p_*} &  \\
			( \D^b k,\#_{\sigma_k}^{\pi^{\flat} I})   \ar[r]^-{\pi_*} & (\D^b_{\mathfrak{m}} R, \#_{\sigma_R}^{I})&\hspace{- 1.7 cm},                } \]
		where the left vertical arrow is induced by the isomorphism $\gamma: q^{\flat} p^{\flat} I  \rightarrow  \pi^\flat I $. 
	\end{proof}
	\section{The d\'{e}vissage theorem for a closed immersion}
	Let $(X,\sigma_X)$ and $(Z, \sigma_Z)$ be schemes with involution and with $\frac{1}{2} \in \SO_X$.  Suppose that  $(X,\sigma_X)$ has a minimal dualizing complex with involution $(I,\sigma_I)$  (cf. Definition \ref{Dwin}).\  Assume that $ \pi: Z \hookrightarrow X$ is a closed immersion which is invariant under involutions. Since $\pi_* \GC \in \Ch^b_{c,Z}(\Q(X))  $ for all $\GC \in \Ch^b_c(\Q(Z)) $, we have a map  
	\[ D_{Z/X} :  GW^{[i]} (Z,\sigma_Z, (\pi^{\flat} I^\bullet, \sigma_{\pi^{\flat}I}) ) \rightarrow GW^{[i]}_Z(X,\sigma_X, (I^\bullet, \sigma_I) ). \] 
	In fact, the transfer morphism $T_{Z/X}$ is the composition 
	\[ GW^{[i]} (Z,\sigma_Z, (\pi^{\flat} I^\bullet, \sigma_{\pi^{\flat}I}) ) \stackrel{D_{Z/X}}\longrightarrow GW^{[i]}_Z(X,\sigma_X, (I^\bullet, \sigma_I))  \rightarrow GW^{[i]}(X,\sigma_X, (I^\bullet, \sigma_I) ),  \]
	where the second map is extending the support.
	\begin{theo}\label{thm:devissage}
	Let $(X,\sigma_X)$ and $(Z, \sigma_Z)$ be schemes with involution and with $\frac{1}{2} \in \SO_X$. Suppose that  $(X,\sigma_X)$ has a a minimal dualizing complex with involution $(I,\sigma_I)$.\ If $ \pi: Z \hookrightarrow X$ is a closed immersion which is invariant under involutions, then the direct image functor $\pi_*: \Ch^b_c(\Q(Z)) \rightarrow \Ch^b_c(\Q(X))  $ induces an equivalence of spectra
\[ D_{Z/X} : GW^{[i]} (Z,\sigma_Z, (\pi^{\flat} I^\bullet, \sigma_{\pi^{\flat}I}) ) \rightarrow GW^{[i]}_Z(X,\sigma_X, (I^\bullet, \sigma_I) ). \] 	
	\end{theo}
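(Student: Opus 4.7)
The plan is to realize $D_{Z/X}$ as the map on Grothendieck-Witt spectra induced by a duality-preserving dg functor
\[ \pi_*: (\Ch^b_c(\Q(Z)), \quis, \#^{\pi^\flat I^\bullet}_{\sigma_Z}, \can^{\pi^\flat I^\bullet}_{\sigma_Z}) \longrightarrow (\Ch^b_{c,Z}(\Q(X)), \quis, \#^{I^\bullet}_{\sigma_X}, \can^{I^\bullet}_{\sigma_X}) \]
that is an equivalence of dg categories with weak equivalences and duality in Schlichting's sense \cite{Sch17}, and then to invoke the invariance of the Grothendieck-Witt spectrum under such equivalences. This reduces the problem to two ingredients: that the duality compatibility natural transformation $\eta$ from Lemma \ref{pushfo} is a quasi-isomorphism, and that $\pi_*$ induces an equivalence on the underlying bounded derived categories.

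The first ingredient becomes clean once one restricts the target. Since $\pi$ is a closed immersion, $\pi_*\GC$ is supported on $Z$ for every $\GC \in \Ch^b_c(\Q(Z))$, so the target of $\pi_*$ does lie in $\Ch^b_{c,Z}(\Q(X))$. Moreover $\sigma_{X*}\pi_*\GC = \pi_*\sigma_{Z*}\GC$ is annihilated by $\I_Z$, so every morphism $\sigma_{X*}\pi_*\GC \to I^\bullet$ factors through the $\I_Z$-torsion subcomplex $\pi_*\pi^\flat I^\bullet \subset I^\bullet$. Hence the third arrow in the composition defining $\eta$ in Lemma \ref{pushfo}, the one induced by the trace $\xi$, is itself an isomorphism of complexes; combined with the other two isomorphisms there, this makes $\eta_\GC$ an isomorphism of complexes, a fortiori a quasi-isomorphism.

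The heart of the argument is the derived equivalence $\pi_*: \D^b_c(\Q(Z)) \to \D^b_{c,Z}(\Q(X))$. The abelian functor $\pi_*: \Q(Z) \to \Q(X)$ is exact and fully faithful with essential image the full subcategory $\Q_Z(X) \subset \Q(X)$ of quasi-coherent $\SO_X$-modules annihilated by $\I_Z$. Every coherent $\SO_X$-module $\F$ supported on $Z$ admits a finite filtration by the subsheaves $\I_Z^k\F$ whose subquotients lie in $\Q_Z(X)$, so a d\'evissage argument of Keller type \cite[Section 1.15]{Ke99}, already invoked in the local case of Theorem \ref{thm:devissagelocal}, shows that the inclusion $\D^b_c(\Q_Z(X)) \hookrightarrow \D^b_{c,Z}(\Q(X))$ is an equivalence of triangulated categories. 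Combining this with the equivalence $\D^b_c(\Q(Z)) \cong \D^b_c(\Q_Z(X))$ induced by $\pi_*$ yields the required triangulated equivalence. The hard part will be checking that this d\'evissage is genuinely compatible with the dualities at the dg level, so that Schlichting's invariance theorem truly applies and lifts the triangulated equivalence to an equivalence of Grothendieck-Witt spectra.
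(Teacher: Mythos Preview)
Your central claim---that $\pi_*: \D^b_c(\Q(Z)) \to \D^b_{c,Z}(\Q(X))$ is an equivalence of triangulated categories---is false, and this is precisely why d\'evissage is a theorem rather than a tautology. Take $X=\A^1_k$ and $Z=\{0\}$. Then $\pi_*$ identifies $\D^b(k)$ with the full subcategory of $\D^b_{c,\{0\}}(k[t])$ of complexes on which $t$ acts by zero; but $k[t]/(t^2)$, viewed as a complex concentrated in degree zero, is not isomorphic in $\D^b(k[t])$ to any such complex (compare $\dim_k\End_{\D^b(k[t])}(k[t]/(t^2))=2$ with $\dim_k\End_{\D^b(k[t])}(k^{\oplus 2})=4$). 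The Keller-type result you cite, in the form used at the local step of the paper, concerns the inclusion of a \emph{Serre} subcategory---there the category $\M_{fl}(R)$ of all modules set-theoretically supported at the closed point, which is closed under extensions. Your $\Q_Z(X)$, the sheaves annihilated by $\I_Z$ exactly, is not closed under extensions, so no such statement applies to it. The functor $\pi_*$ is fully faithful and its image generates $\D^b_{c,Z}(\Q(X))$ as a thick subcategory, but it is not essentially surjective; hence Schlichting's invariance theorem cannot be invoked directly.

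Because no derived equivalence is available, the paper takes a different route. It first proves the isomorphism on Witt groups (Theorem~\ref{DevisW}) by filtering both sides via the coniveau filtration $\D^p$, identifying the successive quotients with products of local contributions via localization at codimension-$p$ points, and applying the local d\'evissage of Section~4 there (where the non-invariant points contribute zero by additivity). Quillen's d\'evissage handles the underlying $K$-theory statement, and Karoubi induction \cite[Lemma~6.4]{Sch17} then upgrades the Witt-group isomorphisms to an equivalence of $GW$-spectra. The ``hard part'' you anticipate---compatibility of the filtration with the involutive dualities---is real, but it lives inside this coniveau argument, not in a nonexistent triangulated equivalence.
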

	\begin{proof}
		The result follows by Theorem \ref{DevisW} and Karoubi induction \cite[Lemma 6.4]{Sch17}.
	\end{proof}
	\begin{theo}\label{DevisW}
		Under the hypothesis of Theorem \ref{thm:devissage}, there is an isomorphism of groups
		\[ D_{Z/X} : W^{i} (Z,\sigma_Z, (\pi^{\flat} I^\bullet, \sigma_{\pi^{\flat}I}) ) \rightarrow W^{i}_Z(X,\sigma_X, (I^\bullet, \sigma_I) ). \] 
	\end{theo}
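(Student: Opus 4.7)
The plan is to reduce the global Witt-group statement to the local dévissage of Theorem \ref{thm:devissagelocal} via a Noetherian induction on $\dim Z$, using a Balmer-Walter style localization long exact sequence of Witt groups of triangulated categories with duality, adapted to the involution setting as in Gille \cite{Gil09}. Since $(I^\bullet, \sigma_I)$ is a minimal dualizing complex, the duality $\#^{I^\bullet}_{\sigma_X}$ descends to a strong duality on both $\D^b_c(\Q(X))$ and $\D^b_{c,Z}(\Q(X))$, and the map $D_{Z/X}$ is induced by the duality preserving restriction of $\pi_*$ to the $Z$-supported subcategory, with duality compatibility $\eta$ inherited from Lemma \ref{pushfo}.

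For the inductive step, I would choose a $\sigma_Z$-invariant dense open subscheme $U \subset Z$ whose complement $Z_0 \subset Z$ is $\sigma_Z$-invariant and has strictly smaller dimension; for instance, take $U$ to be the union of $\sigma_Z$-orbits of generic points of maximal-dimensional components of $Z$. Restriction of dualizing data is compatible with $\pi^\flat$: the restriction of $(\pi^\flat I^\bullet, \sigma_{\pi^\flat I})$ to $U$ agrees with $(\pi|_U)^\flat$ applied to the restriction of $(I^\bullet, \sigma_I)$ to $X \setminus Z_0$, and similarly for $Z_0$. These compatibilities together with the functoriality of $\pi_*$ yield a ladder between the $12$-term localization long exact sequences for the pairs $\D^b_c(\Q(Z_0)) \hookrightarrow \D^b_c(\Q(Z))$ and $\D^b_{c,Z_0}(\Q(X)) \hookrightarrow \D^b_{c,Z}(\Q(X))$, with vertical maps $D_{Z_0/X}$, $D_{Z/X}$, and the open-stratum analogue $D_U$. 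By the inductive hypothesis $D_{Z_0/X}$ is an isomorphism, so by the five lemma it suffices to show that $D_U$ is an isomorphism.

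The open stratum $U$ is zero-dimensional and decomposes as a finite disjoint union of $\sigma$-orbits of generic points $z$ of $Z$. A standard filtered colimit argument for Witt groups of triangulated categories with duality reduces the $D_U$-statement to a direct sum, indexed by such $\sigma$-orbits, of local assertions: for a fixed point $\sigma(z) = z$ one gets precisely the map $W^i(\kappa(z), \sigma, (\pi^\flat I^\bullet_z, \sigma)) \to W^i_{\mathfrak{m}}(\SO_{X,z}, \sigma, (I^\bullet_z, \sigma))$, which is Theorem \ref{thm:devissagelocal} after identifying $\SO_{Z,z}$ with the appropriate invariant quotient of $\SO_{X,z}$ via Lemma \ref{localcase}; for a swapped orbit $\{z, \sigma(z)\}$ the analogous map for the semi-local pair $(\SO_{X,z} \times \SO_{X,\sigma(z)}, \text{swap})$ is reduced, by the standard Morita identification of the hyperbolic form of a ring with its underlying $K$-theory, to Balmer-Walter dévissage as used by Gille \cite{Gil07a}. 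The main obstacle will be the careful equivariant bookkeeping of the sign conventions of the double dual identification (Definition \ref{D}) as one passes between restriction, $\pi^\flat$, and the Gersten-style filtration by codimension: one must verify that the connecting homomorphisms on both sides of the ladder are compatibly identified with second residue homomorphisms in a way that respects the involutions, and that the duality compatibility isomorphism $\eta$ of Lemma \ref{pushfo} commutes with these residues at each inductive step.
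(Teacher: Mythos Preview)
Your Noetherian induction on $\dim Z$ does not close. You take $U$ to be ``the union of $\sigma_Z$-orbits of generic points of maximal-dimensional components of $Z$'' and then assert both that $U$ is a dense open subscheme and that $U$ is zero-dimensional. These two claims are incompatible once $\dim Z > 0$: a finite set of generic points is not open in a positive-dimensional Noetherian scheme, while any genuine dense open $U \subset Z$ satisfies $\dim U = \dim Z$. With $U$ an honest open, the map you call $D_U$ is the d\'evissage map for the closed immersion $U \hookrightarrow X \setminus Z_0$, which is a statement of exactly the same shape as $D_{Z/X}$ with source of the same dimension; the five-lemma step therefore buys nothing and the induction does not terminate. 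Your ``standard filtered colimit argument'' is gesturing at the passage to local rings at generic points, but that passage cannot be grafted onto a single open $U$ in a localization triangle.

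The paper's argument uses instead the Balmer--Walter/Gille filtration by the codimension function $\mu_{\I}$ of the minimal dualizing complex. One filters both $\D^b_c(\Q(Z))$ and $\D^b_{c,Z}(\Q(X))$ by the full triangulated subcategories $\D^p_Z$, $\D^p_{Z,X}$ of complexes acyclic at all points with $\mu_{\I} < p$, and inducts on $p$. The crucial point is that the successive quotients $\D^p/\D^{p+1}$ are identified, via the localization functors, with products of local categories $\D^b_{\mathfrak m_z}(\SO_{Z,z})$ and $\D^b_{\mathfrak m_z}(\SO_{X,z})$ over points $z$ with $\mu_{\I}(z)=p$; it is only at this graded level that one genuinely lands in the situation of Theorem~\ref{thm:devissagelocal} and Lemma~\ref{localcase}. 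Your treatment of swapped orbits is morally right---on a factor $\D^b_{\mathfrak m_z}(\SO_{Z,z}) \times \D^b_{\mathfrak m_{\sigma(z)}}(\SO_{Z,\sigma(z)})$ the duality exchanges the two factors and the Witt group vanishes by additivity---but again this lives on the graded pieces of the codimension filtration, not on an open $U$. The involution-compatibility of the duality with this filtration (your last worry) is what the paper checks, and it is straightforward since $\#^{I^\bullet}_{\sigma_X}$ preserves each $\D^p_{Z,X}$.
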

	\begin{proof} We  borrow the idea of \cite{BW02}, \cite{Gil02}, \cite{Gil07a} to use the filtration of derived category  by codimension of points to reduce the problem to the local case. All we need to check is that this approach is compatible with the duality given by a non-trivial involution.
		
		Let $X^{p}_{\mathcal{I}}: = \{ x \in X | m \leq \mu_{\mathcal{I}} (x) \leq p-1 \} $ and $X^{(p)}_{\mathcal{I}}: = \{ x \in X | \mu_{\mathcal{I}} (x) = p \} $. We define 
		\[ \D^p_{Z,X} : = \Big\{ M^\bullet \in \D^b_{c,Z}(\mathcal{Q}(X))| (M^\bullet)_x \textnormal{ is acyclic for all $x \in X^{p}_\mathcal{I}$}  \Big\}\]
		considered as a full subcategory of $ \D^b_{c,Z}(\mathcal{Q}(X))$. Note that the duality functor $ \#^{I^\bullet}_{\sigma_X}$ maps $\D^p_{Z,X} $ into itself. Therefore, $( \D^p_{Z,X},\#^{I^\bullet}_{\sigma_X})$ is a triangulated category with duality. The subcategories $\D^p_{Z,X} $ provide a finite filtration
		\[ \D^b_{c,Z}(\mathcal{Q}(X)) =   \D^m_{Z,X} \supseteq \D_{Z,X}^{m+1} \supseteq \cdots \supseteq \D_{Z,X}^p \supseteq \cdots \supseteq \D_{Z,X}^n  \supseteq (0) \]
		which induces exact sequences of triangulated categories with duality
		\[ \D_{Z,X}^{p+1} \longrightarrow \D_{Z,X}^{p} \longrightarrow \D_{Z,X}^{p}/ \D_{Z,X}^{p+1}.   \]
		On the other hand, we define
		\[\D_Z^{p}: = \Big\{ M^\bullet \in \D^b_{c}(\mathcal{Q}(Z))| (M^\bullet)_z \textnormal{ is acyclic for all $z \in Z^{p}_{\pi^\flat\mathcal{I}}$} \Big\}  \]
		as a full subcategory of $ \D^b_{c}(\mathcal{Q}(Z))$. By the same reason above, we have a finite filtration
		\[ \D^b_{c}(\mathcal{Q}(Z)) =   \D^m_Z \supseteq \D_Z^{m+1} \supseteq \cdots  \supseteq \D_{Z}^p \supseteq \cdots \supseteq \D_Z^n \supseteq (0) \]
		which induces exact sequences of triangulated categories with duality
		\[ \D_{Z}^{p+1} \longrightarrow \D_{Z}^{p} \longrightarrow \D_{Z}^{p}/ \D_{Z}^{p+1}.   \]
		Since $\mu_{\mathcal{I}} (z) = \mu_{ \pi^\flat \mathcal{I}} (z)$ for all $z\in Z$, we have $\pi_*( \D_{Z}^{p}) \subset \D_{Z,X}^{p} $. It follows that we obtain a map of exact sequences of triangulated categories with duality
		\[    \begin{CD} (\D_{Z}^{p+1}, \#^{\pi^\flat I^\bullet}_{\sigma_Z})  @>>> (\D_{Z}^{p}, \#^{\pi^\flat I^\bullet}_{\sigma_Z}) @>>>  (\D_{Z,X}^{p}/ \D_{Z,X}^{p+1} , \#^{\pi^\flat I^\bullet}_{\sigma_Z})  \\ 
		@V{\pi_*}VV  @V{\pi_*}VV  @V{\pi_*}VV \\
		(\D_{Z,X}^{p+1} , \#^{I^\bullet}_{\sigma_X}) @>>>  (\D_{Z,X}^{p}, \#^{I^\bullet}_{\sigma_X}) @>>> (\D_{Z,X}^{p}/ \D_{Z,X}^{p+1}, \#^{I^\bullet}_{\sigma_X})
		\end{CD}     \] 
		which induces a map of long exact sequences of groups
		\begin{equation}\label{map}  \small{  \begin{CD} \cdots  \longrightarrow & W^i  (\D_{Z}^{p}, \#^{\pi^\flat I^\bullet}_{\sigma_Z})  & \longrightarrow & W^i (\D_{Z}^{p}/ \D_{Z}^{p+1} , \#^{\pi^\flat I^\bullet}_{\sigma_Z} )&\longrightarrow& W^{i+1} (\D_{Z}^{p+1}, \#^{\pi^\flat I^\bullet}_{\sigma_Z})   & \longrightarrow \cdots &\\ 
			& @V{\pi_*}VV  @V{\pi_*}VV  @V{\pi_*}VV  \\
			\cdots \longrightarrow & W^i   (\D_{Z,X}^{p}, \#^{I^\bullet}_{\sigma_X}) &\longrightarrow& W^i  (\D_{Z,X}^{p}/ \D_{Z,X}^{p+1}, \#^{I^\bullet}_{\sigma_X})  &\longrightarrow& W^{i+1}  (\D_{Z,X}^{p+1} , \#^{I^\bullet}_{\sigma_X}) & \longrightarrow \cdots&\phantom{a} .
			\end{CD}   }  \end{equation}
		\begin{lem}
			The localization functors
			\[ loc: \D_{Z,X}^{p}/ \D_{Z,X}^{p+1} \rightarrow \prod_{x \in Z \cap X^{(p)}_{\mathcal{I}}} \D^b_{\mathfrak{m}_{X,x}}(\SO_{X,x}) \]
			and \[loc: \D_{Z}^{p}/ \D_{Z}^{p+1} \rightarrow \prod_{z \in Z^{(p)}_{\pi^\flat \mathcal{I}}} \D^b_{\mathfrak{m}_{Z,z}}(\SO_{Z,z}) \]
			induce equivalences of categories.  
		\end{lem} 
		\begin{proof}
			This result is well-known in the literature, see \cite[Theorem 5.2]{Gil07b} for instance. 
		\end{proof}
		Note that the morphism $\pi_*: W^i (\D_{Z}^{p}/ \D_{Z}^{p+1} , \#^{\pi^\flat I^\bullet}_{\sigma_Z} ) \rightarrow W^i  (\D_{Z,X}^{p}/ \D_{Z,X}^{p+1}, \#^{I^\bullet}_{\sigma_X}) $ is an isomorphism. This can be concluded by the commutative diagram
		\[ \begin{CD} W^i (\D_{Z}^{p}/ \D_{Z}^{p+1} , \#^{\pi^\flat I^\bullet}_{\sigma_Z} ) @>loc>\cong> \bigoplus_{z = \sigma(z)\in  Z^{(p)}_{\pi^\flat \mathcal{I}} }  W^i_{\mathfrak{m}_{Z,z}} ( \SO_{Z,z}, \sigma_{\SO_{Z,z}}, (\pi^\flat \mathcal{I}_z, \sigma_{\pi^\flat \mathcal{I}_z}))\\
		@VV\pi_*V @V\cong V{\oplus_{z} (D_{ \SO_{Z,z} /  \SO_{X,z} } )}V \\
		W^i  (\D_{Z,X}^{p}/ \D_{Z,X}^{p+1}, \#^{I^\bullet}_{\sigma_X})  @>loc>\cong> \bigoplus_{z = \sigma(z) \in Z \cap X^{(p)}_{\mathcal{I}}}  W^i_{\mathfrak{m}_{X,z}} ( \SO_{X,z}, \sigma_{\SO_{X,z}}, ( \mathcal{I}_z, \sigma_{ \mathcal{I}_z})),  \end{CD} \]
		where the right morphism is an isomorphism by Lemma \ref{localcase}. Note that the Witt groups on the right hand side do not contain the component $\sigma(z) \neq z$. To explain, we write down a duality preserving  functor 
		\[(loc,\eta): (\D_{Z}^{p}/ \D_{Z}^{p+1} \#^{\pi^\flat I^\bullet}_{\sigma_Z} ) \rightarrow \Big(\D^b_{\mathfrak{m}_{Z,z}}(\SO_{Z,z}) \times \D^b_{\mathfrak{m}_{Z,\sigma(z)}}(\SO_{Z,\sigma(z)}), \# \Big),  \]
		where $\#: (M,N) \mapsto (N^{\#^{(\pi^\flat I)_z}_{\sigma_z}}, M^{\#^{(\pi^\flat I)_{\sigma(z)}}_{\sigma_{\sigma(z)}}})$ and the duality compatibility isomorphism \[\eta_M : ( (M^{\#^{(\pi^\flat I)}_{\sigma_Z}})_z, (M^{\#^{(\pi^\flat I)}_{\sigma_Z}})_{\sigma(z)} ) \longrightarrow ( M^{\#^{(\pi^\flat I)_z}_{\sigma_z}}_{\sigma(z)}, M^{\#^{(\pi^\flat I)_{\sigma(z)}}_{\sigma_{\sigma(z)}}}_{z} )   \]
		is defined to be the canonical isomorphism induced by restrictions of scalars via the local isomorphisms of local rings $\sigma: \SO_{Z,z} \rightarrow \SO_{Z,\sigma(z)}$ and $\sigma^{-1}: \SO_{Z, \sigma(z)} \rightarrow \SO_{Z,z}$. By these local isomorphisms, note that $\D^b_{\mathfrak{m}_{Z,z}}(\SO_{Z,z})$ is equivalent to $\D^b_{\mathfrak{m}_{Z,\sigma(z)}}(\SO_{Z,\sigma(z)})$, and \[W^i (\D^b_{\mathfrak{m}_{Z,z}}(\SO_{Z,z}) \times \D^b_{\mathfrak{m}_{Z,\sigma(z)}}(\SO_{Z,\sigma(z)}), \# ) =0\] by a version of the additivity theorem (see \cite[Proposition 6.8]{Sch17}, especially the proof). 
		 
	 Finally, by induction on $p$ in Diagram (\ref{map}), we conclude the result. 
	\end{proof}

	\section{A geometrical computation of the d\'{e}vissage for regular schemes}
	Let $(X,\sigma_X)$ be a regular scheme with involution. Assume $(Z,\sigma_Z)$ is a regular scheme with involution and assume further that $Z$ is regularly embedded in $X$ of codimension $d$. We have a normal bundle $N:= N_{Z/X}$ of $X$ on $Z$ which is locally free of rank $d$.  If $Z$ is invariant under $\sigma_X$, then we can define an involution $\sigma_N: N \rightarrow \sigma_* N $ induced from $\sigma_X: X \rightarrow X$ by the invariant ideal sheaf of $Z$ in $X$. The involution $\sigma_N$ induces an involution $\det(\sigma_N): \det(N) \rightarrow \sigma_* \det(N)  $ by taking determinants. Recall that the canonical sheaf is defined by
	\[ \omega_{Z/X} := \sHom_{\SO_Z}(\det(N), \SO_Z). \]
	Now, $\det(\sigma_N)$ induces an involution
	\[ \sigma_{\omega_{Z/X}} : \omega_{Z/X} \rightarrow  \sigma_*\omega_{Z/X} : f \mapsto  \sigma_Z  \circ (\sigma_*f) \circ \det(\sigma_N).\]
	\begin{theo}[D\'evissage]\label{thm:Devissage} Let $(X,\sigma_X)$ be a regular scheme with involution.  Let $(\LC,\sigma_\LC)$ be a dualizing coefficient with $\LC$ a locally free $\SO_X$-module of rank one. If $Z$ is a regular scheme regularly embedded in $X$ of codimension $d$ which is invariant under $\sigma$, then there is an equivalence of spectra
		\[ D_{Z/X, \mathcal{L}} : GW^{[i-d]} (Z,\sigma_Z, (\omega_{Z/X} \otimes_{\SO_{X}} \mathcal{L}, \sigma_{\omega_{Z/X}} \otimes  \sigma_{\mathcal{L} }) )  \longrightarrow GW^{[i]}_Z(X,\sigma_X, (\mathcal{L}, \sigma_{\mathcal{L}}) ). \]
	\end{theo}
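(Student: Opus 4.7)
The plan is to reduce Theorem \ref{thm:Devissage} to the already-established coherent d\'evissage result, Theorem \ref{thm:devissage}, by replacing the line bundle $\mathcal{L}$ with a quasi-isomorphic dualizing complex and then identifying $\pi^\flat$ of this dualizing complex with $\omega_{Z/X}\otimes\mathcal{L}$, shifted by the codimension $d$.

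First, since $X$ is regular (hence Gorenstein of finite Krull dimension), Lemma \ref{lem:goe-dualizingcomplex} produces a minimal dualizing complex with involution $(J^\bullet,\sigma_J)$ on $(X,\sigma_X)$ quasi-isomorphic to $(\mathcal{L},\sigma_\mathcal{L})$. Regularity of $X$ also implies that the inclusion $\Ch^b(X)\hookrightarrow \Ch^b_c(\Q(X))$ induces an equivalence on bounded derived categories, since every coherent $\SO_X$-module has a finite resolution by vector bundles. Hence replacing $(\mathcal{L},\sigma_\mathcal{L})$ by $(J^\bullet,\sigma_J)$ does not change the Grothendieck--Witt spectra involved (by Lemma \ref{II'} applied term-by-term to the quasi-isomorphism of duality coefficients), and analogously on $Z$. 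Once the dualities have been upgraded to complexes of injectives, Theorem \ref{thm:devissage} directly supplies the equivalence
\[
GW^{[i]}\bigl(Z,\sigma_Z,(\pi^\flat J^\bullet,\sigma_{\pi^\flat J})\bigr)\ \xrightarrow{\ \sim\ }\ GW^{[i]}_Z\bigl(X,\sigma_X,(J^\bullet,\sigma_J)\bigr).
\]

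Next, the main geometric identification. For a regular closed immersion $\pi:Z\hookrightarrow X$ of codimension $d$, the Koszul resolution of $\pi_*\SO_Z$ by $\bigwedge^\bullet N^\vee$ yields the fundamental local isomorphism $\pi^\flat\SO_X \simeq \omega_{Z/X}[-d]$ in $\D^b(Z)$. Tensoring with $\pi^*\mathcal{L}$ and composing with $\mathcal{L}\simeq J^\bullet$ produces a quasi-isomorphism $\pi^\flat J^\bullet \simeq \bigl(\omega_{Z/X}\otimes_{\SO_X}\mathcal{L}\bigr)[-d]$ of complexes of $\SO_Z$-modules. I then have to check that the involution $\sigma_{\pi^\flat J}$ built in Section \ref{sec:notation} corresponds under this quasi-isomorphism to $\sigma_{\omega_{Z/X}}\otimes\sigma_{\mathcal{L}}$. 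Since the invariant ideal sheaf $\mathcal{I}_Z$ inherits $\sigma_X$, so does the conormal sheaf $\mathcal{I}_Z/\mathcal{I}_Z^2$ and all its exterior powers, and the Koszul identification intertwines these induced involutions with the determinant involution $\det(\sigma_N)$ on $N=N_{Z/X}$, which is precisely the data defining $\sigma_{\omega_{Z/X}}$. Finally, in Schlichting's framework a shift of the duality by $-d$ translates to a shift of the $GW$-index by $+d$, producing
\[
GW^{[i]}\bigl(Z,\sigma_Z,(\pi^\flat J^\bullet,\sigma_{\pi^\flat J})\bigr)\ \cong\ GW^{[i-d]}\bigl(Z,\sigma_Z,(\omega_{Z/X}\otimes\mathcal{L},\sigma_{\omega_{Z/X}}\otimes \sigma_\mathcal{L})\bigr),
\]
and composing with the equivalence above defines $D_{Z/X,\mathcal{L}}$.

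The work in Steps 1 and 2 is formal once Theorem \ref{thm:devissage} is granted. The real content, and the main obstacle, is the involution-compatibility of the fundamental local isomorphism in Step 3. The map $\sigma_{\pi^\flat J}$ is built from a zigzag of natural transformations involving $\sigma_X$, $\pi_*\sigma_Z$ and $\sigma_J$, whereas $\sigma_{\omega_{Z/X}}$ is defined via $\det(\sigma_N)$ and $\sigma_Z$; identifying them under the Koszul quasi-isomorphism requires careful tracking of the signs coming from the Koszul differential and from the graded braiding on $\sHom$. This sign analysis is exactly where the sign phenomenon emphasized in the introduction (and made explicit in the $\PS^1_S$-computation) enters, so the check must be done locally and stalk-wise to be conclusive.
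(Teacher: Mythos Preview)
Your plan coincides with the paper's proof: replace $(\mathcal{L},\sigma_\mathcal{L})$ by a minimal injective dualizing complex via Lemma \ref{lem:goe-dualizingcomplex}, invoke Theorem \ref{thm:devissage}, and then identify $\pi^\flat$ of the dualizing complex with $(\omega_{Z/X}\otimes\mathcal{L})[-d]$ through the fundamental local isomorphism, checking compatibility with the involutions locally via the Koszul resolution.

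Two points where the paper is more careful than your sketch. First, the local check is not carried out on $\sigma$-invariant affine opens (which need not exist), but on an arbitrary affine $U\cong\Spec R$ paired with $U'=\sigma_X(U)\cong\Spec R'$; the involution then becomes a pair of mutually inverse ring isomorphisms $R\leftrightarrows R'$, and the regular sequence on $U$ is transported to $U'$ via $\sigma_R$. This is the setting of Lemma \ref{lem: RSf}. Second, and more substantially, the fundamental local isomorphism only furnishes a zigzag $H^d\pi^\flat\mathcal{I}^\bullet[-d]\to\tau_{\leq d}\pi^\flat\mathcal{I}^\bullet\leftarrow\pi^\flat\mathcal{I}^\bullet$ in $\D^b_c(\Q(Z))$, not an honest chain map; Lemma \ref{II'} and the dg-duality framework require a genuine map of complexes. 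The paper resolves this by using injectivity of $\pi^\flat\mathcal{I}^\bullet$ to split the roof in the homotopy category, and then arguing that any chosen splitting is already strictly compatible with the involution (because the relevant $\Hom$ into $\pi^\flat\mathcal{I}^{d-1}$ vanishes, via \cite[Lemma 3.3(5)]{Gil02}). Your appeal to ``Lemma \ref{II'} applied term-by-term to the quasi-isomorphism'' elides this, and it is precisely the step the referee would ask about.
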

	\begin{proof}  By Lemma \ref{lem:goe-dualizingcomplex} there exists $(\mathcal{L},\sigma_{\mathcal{L}}) \rightarrow (\I^\bullet,\sigma_I)$, a minimal injective resolution compatible with $\sigma_X$, where $\I^\bullet =  \I^0 \rightarrow \I^1 \rightarrow \cdots \rightarrow \I^n $. We have the d\'{e}vissage isomorphism on the level of coherent Grothendieck-Witt groups
		\[ D_{Z/X} : GW^{i} (Z,\sigma_Z, (\pi^{\flat} \I^\bullet, \sigma_{\pi^{\flat}\I}) ) \rightarrow GW^{i}_Z(X,\sigma_X, (\I^\bullet, \sigma_\I) ). \] 
		We want to construct a quasi-isomorphism
		$\beta:\omega_{Z/X} \otimes_{\SO_{X}} \mathcal{L}[-d] \stackrel{\simeq}\longrightarrow \pi^\flat \I^\bullet$ which is compatible with involutions, then we can use a variant of Lemma \ref{II'} to conclude the result. 
		 We will see that this map stems from  
		the fundamental local isomorphism
		\[ H^i \sHom_{\SO_X}(\SO_Z, I^\bullet) =  \Ext^i_{\SO_{X}} (\SO_Z, \mathcal{L}) = \left\{   \begin{array}{lll}
		0 & \textnormal{if $i \neq d$ } \\
		\omega_{Z/X} \otimes_{\SO_{X}} \mathcal{L} & \textnormal{if $i = d$ }
		\end{array} \right. \]
		(see \cite[p 179]{Ha66}). Before checking the compatibility of the involutions,  we review the fundamental local isomorphism. We take  an open affine subscheme $U\cong \Spec(R)$ of $X$, and $Z_U := Z \times_X U \cong \Spec(R/J)$ with $J$ defined by a regular sequence $(x_1 , \cdots , x_d)$ of length $d$. Let $L :=\LC_U$ and $I:=\I_U$. Let $E = \oplus_{ 1 \leq i \leq d} Re_i $ be a free $R$-module with basis $\{ e_1, \cdots ,e_d\}$. 
			Let $s:E \rightarrow R :  (y_i e_i)_{1\leq i \leq d} \mapsto \sum_{i=1}^d y_i x_i $. There exists a Koszul resolution of $R/J$  
		\[ \cdots \rightarrow 0 \rightarrow \extp^d E \rightarrow \extp^{d-1}E \rightarrow \cdots \rightarrow \extp^0 E \rightarrow R/J  \]
		with differentials given by 
		\[d^i : \extp^i E \rightarrow \extp^{i-1} E : \alpha_1 \wedge \cdots \wedge \alpha_i \mapsto \sum_{t=1}^{i} (-1)^{t+1} s(\alpha_t) \alpha_1 \wedge \cdots \wedge \widehat{\alpha_t} \wedge \cdots \wedge \alpha_i. \]
		Since $(x_1 , \cdots , x_d)$ is a regular sequence, the Koszul complex provides a  projective resolution of $R/J$. Set $P^{-i} = P_i = \extp^i E $.
		
	 The fundamental local isomorphism 
		\[ \ext^i_R (R/J, L )  \rightarrow \Hom_R(\det(J/J^2), L)  \]
		of the $R/J$-modules (see \cite[Chapter III.7 p 176]{Ha66}) is defined as follows. We draw the following diagram of $R$-modules
		\[\arraycolsep=1.7pt\def\arraystretch{1.7}
		\begin{array}{cc|ccccc}
		\multicolumn{1}{c}{  }& \multicolumn{1}{c}{  }  & \multicolumn{1}{c}{  } & \multicolumn{1}{c}{  } & \multicolumn{1}{c}{  } & \multicolumn{1}{c}{  } & \multicolumn{1}{c}{ \Hom_R(\det(J/J^2), L/JL) } \\
		\multicolumn{1}{c}{  }& \multicolumn{1}{c}{  }  & \multicolumn{1}{c}{  } & \multicolumn{1}{c}{  } & \multicolumn{1}{c}{  } & \multicolumn{1}{c}{  } & \multicolumn{1}{c}{ \tilde{\beta} \xuparrow{0.35cm} } \\
		\multicolumn{1}{c}{  }& \multicolumn{1}{c}{   }  & \multicolumn{1}{c}{\Hom_R(P^0, L)} & \multicolumn{1}{c}{ \longrightarrow } & \multicolumn{1}{c}{ \cdots } & \multicolumn{1}{c}{ \longrightarrow } & \multicolumn{1}{c}{ \Hom_R(P^d, L) }  \\
		\multicolumn{1}{c}{  }& \multicolumn{1}{c}{  }  & \multicolumn{1}{c}{ \xdownarrow{0.35cm} } & \multicolumn{1}{c}{  } & \multicolumn{1}{c}{  } & \multicolumn{1}{c}{  } & \multicolumn{1}{c}{ \xdownarrow{0.35cm} } \\
		\cline{3-7} 
		\Hom_R(R/J, I^0) & \longrightarrow & \Hom_R(P^0, I^0) & \longrightarrow &\cdots & \longrightarrow  & \Hom_R(P^d, I^0) \\
		\xdownarrow{0.35cm}
		&   &  \xdownarrow{0.35cm}
		&&&&   \xdownarrow{0.35cm}
		\\
		\Hom_R(R/J, I^1) & \longrightarrow & \Hom_R(P^0, I^1) & \longrightarrow &\cdots & \longrightarrow  & \Hom_R(P^d, I^1) \\
		\xdownarrow{0.35cm}
		&   &  \xdownarrow{0.35cm}
		&&&&   \xdownarrow{0.35cm}\\
		\Hom_R(R/J, I^2) & \longrightarrow & \Hom_R(P^0, I^2) & \longrightarrow &\cdots & \longrightarrow  & \Hom_R(P^d, I^2) \\
		\xdownarrow{0.35cm}
		&   &  \xdownarrow{0.35cm}
		&&&&   \xdownarrow{0.35cm}
		\\
		\vdots &   & \vdots &   &\vdots &   & \vdots \\
		\xdownarrow{0.35cm}
		&   &  \xdownarrow{0.35cm}
		&&&&   \xdownarrow{0.35cm} \\
		\Hom_R(R/J, I^n) & \longrightarrow & \Hom_R(P^0, I^n) & \longrightarrow &\cdots & \longrightarrow  & \Hom_R(P^d, I^n) 
		\end{array}\]
		
		Note that by this diagram, we have zigzags 
		\[\Hom_{R}(R/J,I^\bullet) \longrightarrow \Tot \big(\Hom_R(P^\bullet, I^\bullet)\big)  \longleftarrow \Hom_R(P^\bullet, L)  \]
		of quasi-isomorphisms of $R$-modules. Now, we deduce 
		\[ \ext^ i_R (R/J, L) := H^i(\Hom_{R}(R/J,I^\bullet)) \cong H^i\Hom_R(P^\bullet, L)  \]
		and we define a map of $R$-modules
		\[\tilde{\beta} :   \Hom_R(\det E, L) \rightarrow \Hom_R(\det(J/J^2), L/JL):  f \mapsto \tilde{\beta}(f),    \]
		where $  \tilde{\beta} (f) (\bar{x}_1 \wedge \cdots \wedge \bar{x}_d ) :=  f(x_1 \wedge \cdots \wedge x_d)$. One checks that the composition
		\[ \Hom_R(P^{d-1}, L) \rightarrow \Hom_R(P^d, L) \stackrel{\beta}\rightarrow  \Hom_R(\det(J/J^2), L/JL) \]
		is zero. Thus, we get a map of $R$-modules
		\[ H^d\Hom(P^\bullet, L) \rightarrow  \Hom_R(\det(J/J^2), L/JL)  \]
		which can be considered as a map of $R/J$-modules, and it is an isomorphism. This morphism extends to a global morphism as it does not depend on the choice of the regular sequence (cf. \cite[Chapter III.7 Lemma 7.1]{Ha66}).
		
		Now, we check that $\beta$ is compatible with the involution. This can be checked affine locally. 
		We take any open affine subscheme $U \cong \Spec (R)$ of $X$, which need not to be compatible with the involution. It follows that the subscheme $ U' :=\sigma_X(U)$ is also affine and can be assumed to be isomorphic to $\Spec(R')$ for some ring $R'$. Then the involution $\sigma_X$ restricts to a pair of isomorphisms of affine schemes $\sigma_U : U \rightarrow U' $ and $\sigma_{U'} : U' \rightarrow U $ (with $\sigma_{U'} = \sigma_U^{-1}$) which induces a pair of isomorphisms of rings $\sigma_R: R \rightarrow R'$ and $\sigma_{R'}: R' \rightarrow R$ (with $\sigma_{R'} = \sigma_R^{-1}$). Recall that $Z_U \cong \Spec(R/J)$, where $J $ is an ideal of $R$ defined by a regular sequence $(x_1 ,\ldots, x_d)$ of length $d$.   Consider the restriction of $U'$ into the closed subscheme $Z_U' $ which is isomorphic to $\Spec(R'/J')$ for $J'$ an ideal inside $R'$. Since by assumption $Z$ is invariant under the involution of $X$, we have the following commutative diagram of $R$-modules
		\[ \xymatrix{
		J \ar[r] \ar[d]^-{\sigma_J}& R \ar[r] \ar[d]^-{\sigma_R} & R/J \ar[d]^-{\sigma_{R/J}} \\
	    J' \ar[r] \ar[d]^-{\sigma_{J'}}& R' \ar[r] \ar[d]^-{\sigma_{R'}} & R'/J' \ar[d]^-{\sigma_{R'/J'}} \\
	    J \ar[r] & R \ar[r]  & R/J\ ,  \\}\] where the vertical arrows are all isomorphisms and $\sigma_J = \sigma_{J'}^{-1}, \sigma_R = \sigma_{R'}^{-1}, \sigma_{R'/J'} = \sigma_{R/J}^{-1}$. Therefore, the sequence $(\sigma_R(x_1), \cdots, \sigma_R(x_d))$ is a regular sequence in $R'$ defining $J'$.

		As above, let $L' :=\LC_{U'}$ and $I':=\I_{U'}$. Let $E' = \oplus_{ 1 \leq i \leq d} Re'_i $ be a free $R'$-module with basis $\{ e'_1, \cdots ,e'_d\}$. Similarly, we have a Koszul complex $P'^\bullet \rightarrow R'/J' $ associated to the section $s': E' \rightarrow R': (y_i e_i)_{1\leq i \leq d} \mapsto \sum_{i=1}^d y_i \sigma(x_i)$ with $P'^{-i}: = \Lambda^i E'$.\ We now consider the following isomorphism of $R$-modules
		\[\sigma_E: E \rightarrow \sigma_{R*} E' : (y_i e_i)_{1\leq i \leq d} \mapsto (\sigma_R(y_i) e_i)_{1\leq i \leq d} \]
		It induces isomorphisms of $R$-modules
		\[\extp^i \sigma_E : \extp^i E\rightarrow \sigma_{R*}(\extp^i E')\] 
		on the exterior powers $\extp^i E$. It is straightforward to check that the diagram
		\[ \begin{CD}
		E @>s>> R \\
		@V\sigma_EVV @V\sigma_RVV \\
		\sigma_{R*}E' @>s'>> \sigma_{R*} R'
		\end{CD}\]
		of $R$-modules commutes. By checking the differentials, we get an isomorphism $ \sigma_P: P^\bullet \rightarrow \sigma_{R*}P'^\bullet $ of complexes of $R$-modules with inverse $ \sigma_{P'}: P'^\bullet \rightarrow \sigma_{R'*}P^\bullet $. Moreover, we can verify that the diagram
		\[ \xymatrix{
		       \Hom_R(P^d, L) \ar[r]^-{\beta} \ar[d]^-{\epsilon_{P^d,L}} & \Hom_R(\det(J/J^2), L/JL)  \ar[d]^-{\epsilon_{\det(J/J^2), L/JL}} \\
	       \Hom_{R'}(P'^d, L') \ar[r]^-{\beta'}  & \Hom_{R'}(\det(J'/J'^2), L'/J'L') } 
	\]
		is also commutative.
	
		 Using the same notations as in Lemma \ref{lem: RSf}, we form the following commutative diagram
		\[
		\xymatrix@C=1em{
		\Hom_{R}(R/J, I^\bullet)  \ar[r]  \ar[d]^-{\epsilon_{R/J,I}}& \Tot \big(\Hom_R(P^\bullet, I^\bullet)\big) \ar[d]^-{\epsilon_{P,I}} & \ar[l]  \Hom_R(P^\bullet, L) \ar[d]^-{\epsilon_{P,L}} \ar[r] &  \Hom_R(\det(J/J^2), L/JL) [-d] \ar[d]^-{\epsilon_{\det(J/J^2), L/JL}}\\
		\Hom_{R'}(R'/J', I'^\bullet) \ar[r]& \Tot \big(\Hom_{R'}(P'^\bullet, I'^\bullet)\big)  &\ar[l] \Hom_{R'}(P'^\bullet, L') \ar[r] &  \Hom_{R'}(\det(J'/J'^2), L'/J'L') [-d]
	}\]
		of complexes of $R$-modules. 
		Since the horizontal maps are all quasi isomorphisms and they induce maps on homology, we get   
		\[\begin{CD}
		H^d \Hom_{R}(R/J, I^\bullet) @>\cong>> \Hom_R(\det(J/J^2), L/JL) \\
		@VV\epsilon_{R/J,I}V @VV\epsilon_{\det(J/J^2), L/JL}V \\
		H^d \Hom_{R'}(R'/J', I'^\bullet) @>\cong>> \Hom_{R'}(\det(J'/J'^2), L'/J'L')
		\end{CD}\]
		which is the desired commutative diagram of $R/J$-modules.
		
		Finally, we still need to find a map of complexes $H^d \pi^\flat \I^\bullet[-d] \rightarrow  \pi^\flat \I^\bullet$	in $\Ch^b_c(\Q(Z))$ which is compatible with the involution.\ At this stage, we only have a right roof $ H^d \pi^\flat \I^\bullet [-d] \stackrel{s}\rightarrow C^\bullet \stackrel{t}\leftarrow \pi^\flat \I^\bullet $ compatible with the involution in $\D^b_c(\Q(Z))$, where $C^\bullet$ is the canonical truncation  $\tau_{\leq d}\pi^\flat \I^\bullet$. By this we mean 
		\[C^i = \begin{cases}
		  \pi^\flat \I^d / \textnormal{Im} (\pi^\flat \I^{d-1} \rightarrow  \pi^\flat \I^d ) & \text{if } i =d\\
		  \pi^\flat \I^i & \text{if } d<i \leq n  \\
		  	0 & \text{if otherwise} 
		\end{cases}  \] 
		and the maps of complexes $s$ and $t$ are the obvious canonical quasi-isomorphisms. Since $\pi ^\flat \I^\bullet$ is a complex of injectives, the quasi-isomorphism $t$ is a split injection in the homotopy category  $K^b_c(\Q(Z))$ of complexes, i.e. there is a map of complexes $r: C^\bullet \rightarrow \pi^\flat \I^\bullet$ such that $r \circ t \simeq \id $ in $K^b_c(\Q(Z))$, by  \cite[Lemma 10.4.6]{Wei94}. I claim that any choice of the splitting $r$ will define a quasi-isomorphism of complexes $ rs: H^d \pi^\flat \I^\bullet[-d] \rightarrow  \pi^\flat \I^\bullet$ in $\Ch^b_c(\Q(Z))$ such that the diagram 
		\[ \xymatrix{ H^d \pi^\flat \I^\bullet[-d] \ar[d]_-{ \sigma'_{ \pi^\flat \I^\bullet} := H^d\sigma_{ \pi^\flat \I^\bullet}} \ar[r]^-{rs} &   \pi^\flat \I^\bullet \ar[d]^-{\sigma_{\pi^\flat \I}} \\
		\sigma_{Z*}H^d \pi^\flat \I^\bullet[-d] \ar[r]^-{\sigma_{Z*}(rs)} &  \sigma_{Z*}  \pi^\flat \I^\bullet }\]
	 is commutative in $\Ch^b_c(\Q(Z))$, while at this stage it is only known that it commutes in $\D^b_c(\Q(Z))$. 
	 By \cite[Corollary 10.4.7]{Wei94} and the fact that $\pi^\flat(\I^\bullet)$ is injective, 
 the diagram also commutes in $K^b_c(\Q(Z))$. Let us define $ f: =  \sigma_{\pi^\flat \I}\circ (rs) - \sigma_{Z*}(rs) \circ \sigma'_{\pi^\flat \I^\bullet} $, which is a map of complexes from $H^d \pi^\flat \I^\bullet[-d] $ to $ \sigma_{Z*}  \pi^\flat \I^\bullet$. Then the commutativity in $K^b_c(\Q(Z))$ implies that $f$ is null homotopic. Since the complex $H^d \pi^\flat \I^\bullet[-d] $ is concentrated in degree $d$, we see that the morphism of complexes $f$ is also concentrated in degree $d$. By the definition of null homotopy, there exists a map of $\SO_Z$-modules $u: H^d \pi^\flat \I^\bullet \rightarrow \sigma_{Z*} \pi^\flat \I^{d-1} $ such that $u \circ d^{\pi^\flat \I} =f $. We now want to conclude that $\Hom_{\SO_Z} ( H^d \pi^\flat \I, \pi^\flat \I^{d-1}) \cong \Hom_{\SO_Z} ( H^d \pi^\flat \I, \sigma_{Z*} \pi^\flat \I^{d-1}) $ vanishes. This can be checked locally, and by using that $ht(J) =d$ and $\Hom_{R/J}(M, \Hom_R(R/J, I^{d-1})) = 0$. This last fact is proved in \cite[Lemma 3.3 (5)]{Gil02}. Therefore, one has $u = f = 0$ .   
	\end{proof}
	\begin{Coro}\label{WDevissage}
		Under the hypothesis of Theorem \ref{thm:Devissage},  the direct image map 
		\[ W^{i-d} (Z,\sigma_Z, (\omega_{Z/X} \otimes_{\SO_{X}} \mathcal{L}, \sigma_{\omega_{Z/X}} \otimes  \sigma_{\mathcal{L}_Z }) )  \longrightarrow W^{i}_Z(X,\sigma_X, (\mathcal{L}, \sigma_{\mathcal{L}}) )\] 
		induces an isomorphism of groups.
	\end{Coro}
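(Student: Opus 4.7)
The plan is to derive this purely formally from Theorem \ref{thm:Devissage}, without redoing the geometry. Schlichting's framework (as summarized in the introduction, citing \cite[Proposition 6.3]{Sch17}) identifies the Witt group $W^{i}$ with a negative homotopy group of the Grothendieck--Witt spectrum $GW^{[i]}$. Hence the equivalence of spectra
\[ D_{Z/X, \mathcal{L}} : GW^{[i-d]} (Z,\sigma_Z, (\omega_{Z/X} \otimes_{\SO_{X}} \mathcal{L}, \sigma_{\omega_{Z/X}} \otimes  \sigma_{\mathcal{L} }) )  \longrightarrow GW^{[i]}_Z(X,\sigma_X, (\mathcal{L}, \sigma_{\mathcal{L}}) ) \]
from Theorem \ref{thm:Devissage} induces isomorphisms on all homotopy groups, in particular on the Witt groups displayed in the corollary. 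So the one-line argument is: apply $\pi_{-i}$ of the spectrum equivalence.

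If instead one prefers a proof that stays at the level of triangulated categories with duality (mirroring the approach of \cite{BW02} and \cite{Gil07a}), the strategy is to reassemble the ingredients already used for Theorem \ref{thm:Devissage}. First, invoke Lemma \ref{lem:goe-dualizingcomplex} to fix an involution-compatible minimal injective resolution $(\mathcal{L},\sigma_\LC) \to (\I^\bullet, \sigma_\I)$. The globalization of Lemma \ref{II'} then produces a duality-preserving equivalence
\[ (\D^b_Z(X), \#^{\mathcal{L}}_{\sigma_X}, \can^{\mathcal{L}}_{\sigma_X}) \;\simeq\; (\D^b_{c,Z}(\Q(X)), \#^{\I^\bullet}_{\sigma_X}, \can^{\I^\bullet}_{\sigma_X}), \]
giving $W^{i}_Z(X,\sigma_X,(\mathcal{L},\sigma_\LC)) \cong W^{i}_Z(X,\sigma_X,(\I^\bullet,\sigma_\I))$. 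Symmetrically, the involution-compatible quasi-isomorphism $\beta: \omega_{Z/X}\otimes_{\SO_X}\mathcal{L}[-d] \xrightarrow{\simeq} \pi^\flat \I^\bullet$ constructed inside the proof of Theorem \ref{thm:Devissage} produces
\[ W^{i-d}\bigl(Z,\sigma_Z, (\omega_{Z/X}\otimes_{\SO_X}\mathcal{L}, \sigma_{\omega_{Z/X}}\otimes \sigma_\LC)\bigr) \;\cong\; W^{i}\bigl(Z,\sigma_Z, (\pi^\flat \I^\bullet, \sigma_{\pi^\flat \I})\bigr). \]
Composing these two change-of-coefficients isomorphisms with the isomorphism furnished by Theorem \ref{DevisW} yields the desired result.

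The only mildly non-trivial point in the second route is that on a regular scheme the canonical inclusion $\D^b_Z(X)=\D^b(\VB(X))_Z \hookrightarrow \D^b_{c,Z}(\Q(X))$ is an equivalence (every coherent sheaf admits a finite resolution by vector bundles), and this equivalence is compatible with the respective dualities. I expect this to be the main thing one has to spell out if one insists on a fully derived-category proof; with the spectrum-level route, it is already absorbed into Theorem \ref{thm:Devissage}, and so there is essentially no further obstacle to overcome.
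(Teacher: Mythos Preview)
Your proposal is correct, and your first route is exactly the paper's (implicit) argument: the corollary is stated without proof immediately after Theorem~\ref{thm:Devissage}, so it is understood to follow by reading off Witt groups as negative homotopy groups of the $GW$-spectra. Your second, triangulated-category route is a valid alternative that unpacks the same ingredients already assembled in the proof of Theorem~\ref{thm:Devissage}; it buys nothing new here but would be the natural way to phrase things if one were working purely in the Balmer--Walter/Gille setting without invoking spectra.
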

	
	\section{Some computations}
	Let $S$ be a regular scheme with $\frac{1}{2} \in \SO_S$. Let $\mathbb{P}^1_S$ be the projective line $\mathrm{Proj}(\SO_S[X,Y])$.

	\subsection{$\mathbb{P}^1$ with switching involution} Let $\tau_{\PS^1_S}  : \PS^1_S \rightarrow \PS^1_S : [X: Y] \mapsto [Y:X]$ be the involution which switches the coordinates of $\mathbb{P}^1_S$. 
	\begin{theo}\label{P1switch} Let $S$ be a regular scheme with $\frac{1}{2} \in \SO_S$. Then, we have an equivalence of spectra
		\[ GW^{[i]} (\PS^1_S, \tau_{\PS^1_S}) \cong GW^{[i]} (S) \oplus GW^{[i+1]}(S) \]\ 
		In particular, we have the following isomorphism on Witt groups
		\[W^{i}(\PS^1_S, \tau_{\PS^1_S}) \cong W^{i} (S) \oplus W^{i+1}(S) \]
	\end{theo}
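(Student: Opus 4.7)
The plan is to combine the d\'evissage theorem (Theorem \ref{thm:Devissage}) with a localization sequence, handling the open complement via the $C_2$-equivariant $\A^1$-invariance (Theorem \ref{c2a1i}) through a multiplication homotopy. I would take the fixed section $Z = \{[1:1]\} \hookrightarrow \PS^1_S$, an invariant closed subscheme of codimension one with $Z \cong S$ and trivial induced involution. A local defining function for $Z$ near this fixed point is $X-Y$, which satisfies $\tau_{\PS^1_S}(X-Y) = -(X-Y)$, so the induced involution on the conormal bundle, and hence on the canonical $\omega_{Z/\PS^1_S}$, acts by $-1$. Theorem \ref{thm:Devissage} applied with $\LC = \SO_{\PS^1_S}$ then yields
\[ D_{Z/\PS^1_S}:\ GW^{[i-1]}(S,-\can) \xrightarrow{\ \simeq\ } GW^{[i]}_Z(\PS^1_S, \tau_{\PS^1_S}), \]
and the standard shift identification $GW^{[j]}(S,-\can) \simeq GW^{[j+2]}(S)$ upgrades the left-hand side to $GW^{[i+1]}(S)$.

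Setting $U := \PS^1_S \setminus Z$, the localization fiber sequence reads
\[ \cdots \to GW^{[i]}_Z(\PS^1_S, \tau_{\PS^1_S}) \to GW^{[i]}(\PS^1_S, \tau_{\PS^1_S}) \xrightarrow{\ j^*\ } GW^{[i]}(U, \tau_{\PS^1_S}|_U) \to \cdots. \]
In the affine coordinate $s = (X-Y)/(X+Y)$ the open $U$ identifies with $\A^1_S$ carrying the involution $s \mapsto -s$, which does not fix the indeterminate, so Theorem \ref{c2a1i} cannot be invoked directly. I would instead consider the $C_2$-equivariant multiplication map
\[ H:(\A^1_S,-\id) \times_S (\A^1_S,\id) \longrightarrow (\A^1_S,-\id), \qquad (s,t) \mapsto st. \]
Applying Theorem \ref{c2a1i} to the second factor, with the base taken to be the regular scheme with involution $(\A^1_S,-\id)$ rather than $S$, shows that the two sections $\iota_0, \iota_1$ of the first projection (given by $t=0$ and $t=1$) induce the same pullback on Hermitian $K$-theory. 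Since $H\circ \iota_1 = \id$ and $H\circ \iota_0 = \iota_0\circ p$, where $p$ is the structure map of $(\A^1_S,-\id)$ over $S$ and $\iota_0: S \hookrightarrow (\A^1_S,-\id)$ is the fixed zero section, pulling back through $H$ yields $p^* \iota_0^* = \id$; combined with the trivial equality $\iota_0^* p^* = \id$, this proves that $p^*: GW^{[i]}(S) \xrightarrow{\simeq} GW^{[i]}(U, \tau_{\PS^1_S}|_U)$ is an isomorphism.

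Consequently the composite $j^* \circ p_{\PS^1_S}^*$ coincides with $(p_{\PS^1_S}|_U)^*$ and is therefore an isomorphism. Hence $j^*$ is split surjective with section $p_{\PS^1_S}^*$, the boundary maps in the localization sequence vanish, and one obtains, for each $i$, a split short exact sequence, giving
\[ GW^{[i]}(\PS^1_S, \tau_{\PS^1_S}) \;\cong\; GW^{[i]}_Z(\PS^1_S, \tau_{\PS^1_S}) \oplus GW^{[i]}(S) \;\cong\; GW^{[i+1]}(S) \oplus GW^{[i]}(S), \]
and the Witt-group version follows by reading off the appropriate homotopy groups. The main obstacle is the third step: extending the $C_2$-equivariant $\A^1$-invariance of Theorem \ref{c2a1i} to an $\A^1_S$ whose involution does not fix the indeterminate. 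The multiplication homotopy $H(s,t) = st$ is the essential device, and one needs to verify that $(\A^1_S,-\id)$ is a legitimate base for Theorem \ref{c2a1i}, which follows from the fact that it is itself a regular scheme with involution and $\tfrac{1}{2}$ in its global sections.
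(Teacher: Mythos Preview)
Your proposal is correct and matches the paper's proof essentially step for step: the paper takes the same invariant point $[1:1]$, computes the d\'evissage contribution via the sign on $X-Y$ exactly as you do (its Lemma~\ref{P1P0support}), identifies the open complement with $(\A^1_S,t\mapsto -t)$ (via $t\mapsto[t+\tfrac12:t-\tfrac12]$ rather than your coordinate $s=(X-Y)/(X+Y)$, but these agree up to isomorphism), and handles that open piece with the same multiplication homotopy $(s,t)\mapsto st$ combined with Theorem~\ref{thm:c2a1invariance} applied over the base $(\A^1_S,-\id)$ (its Lemma~\ref{A1t-t}). The only differences are notational.
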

	
	\begin{lem}\label{A1t-t}
		Let $\sigma^{-}_{\mathbb{A}_S^1}: \mathbb{A}_S^1 \rightarrow \mathbb{A}_S^1$ be the involution given by $t \mapsto -t$. Then, the pullback
		\[ p^*: GW^{[i]}(S) \longrightarrow GW^{[i]}(\mathbb{A}_S^1,\sigma^-_{\mathbb{A}_S^1}) \]
		is an isomorphism. 
	\end{lem}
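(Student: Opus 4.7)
The plan is to exhibit the zero section $i_0: S \hookrightarrow \A^1_S$ as a two-sided inverse of $p^*$. Since $0$ is a fixed point of $\sigma^-_{\A^1_S}$, the section $i_0$ is equivariant, and from $p \circ i_0 = \id_S$ we immediately obtain $i_0^* \circ p^* = \id$ on $GW^{[i]}(S)$; so $p^*$ is split injective. The remaining task is to show $p^* \circ i_0^* = \id$ on $GW^{[i]}(\A^1_S, \sigma^-_{\A^1_S})$.

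For this I would use the standard scalar-multiplication homotopy. Endow $\A^1_u \times_S \A^1_S$ with the involution trivial on the $u$-factor and $\sigma^-_{\A^1_S}$ on the $t$-factor, and consider the multiplication $m: \A^1_u \times_S \A^1_S \to \A^1_S$, $(u, t) \mapsto ut$, which is equivariant because $u(-t) = -(ut)$. The two sections $s_\epsilon: \A^1_S \to \A^1_u \times_S \A^1_S$, $t \mapsto (\epsilon, t)$ for $\epsilon = 0, 1$, are equivariant sections of the projection $q: \A^1_u \times_S \A^1_S \to \A^1_S$ and satisfy $m \circ s_0 = i_0 \circ p$ and $m \circ s_1 = \id$. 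If $q^*$ is an isomorphism on $GW^{[i]}$, then both $s_\epsilon^*$ equal $(q^*)^{-1}$ and hence coincide, so $p^* i_0^* x = s_0^* m^* x = s_1^* m^* x = x$ for every class $x \in GW^{[i]}(\A^1_S, \sigma^-_{\A^1_S})$, completing the proof.

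The crux is therefore the isomorphism $q^*: GW^{[i]}(\A^1_S, \sigma^-_{\A^1_S}) \to GW^{[i]}(\A^1_u \times_S \A^1_S, \id_u \times \sigma^-_{\A^1_S})$, with trivial involution on the newly adjoined factor. For affine $S = \Spec(R)$, this is Karoubi's $\A^1$-invariance for affine rings with involution \cite[Part II]{Kar74}, applied to the base ring $R[t]$ (with involution $t \mapsto -t$) and the new indeterminate $u$ (trivially acted on). For general regular $S$, I would reduce to the affine case by Schlichting's Zariski Mayer--Vietoris \cite{Sch17}: since $S$ carries trivial involution, any Zariski affine cover of $S$ automatically pulls back to an invariant affine cover of $\A^1_S$ (and of $\A^1_u \times_S \A^1_S$), so the obstacle mentioned in the introduction for Theorem \ref{c2a1i} (the possible non-existence of invariant affine covers on non-quasi-projective equivariant bases) does not arise here. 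I expect the main technical subtlety to be the careful verification that $m$, $s_0$, $s_1$, and $q$ are equivariant morphisms compatible with the Hermitian structure, so that all pullbacks are genuine morphisms of $GW$-spectra with respect to the trivial duality coefficient; once this and the invocation of Karoubi's result are in place, the remainder is formal.
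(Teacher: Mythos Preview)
Your proposal follows the same overall strategy as the paper: both argue that $i_0^* \circ p^* = \id$ from $p \circ i_0 = \id_S$, and both use the multiplication map $(u,t)\mapsto ut$ as a $C_2$-equivariant homotopy between $i_0\circ p$ and $\id$. The difference lies in how the $\A^1$-invariance along the homotopy parameter $u$ (your isomorphism $q^*$) is established. The paper simply invokes its own Theorem~\ref{thm:c2a1invariance}, applied with base $(\A^1_S,\sigma^-_{\A^1_S})$ and trivially acted indeterminate $u$; this is a forward reference, but a legitimate one since Theorem~\ref{thm:c2a1invariance} is proved independently via d\'evissage and Proposition~\ref{P1}. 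You instead argue directly: for affine $S=\Spec R$ the base $R[t]$ is regular with involution, so Karoubi's affine result \cite{Kar74} gives $q^*$ an isomorphism; for general regular $S$, you observe that any affine Zariski cover of $S$ (trivial involution) pulls back to an \emph{invariant} affine cover of $(\A^1_S,\sigma^-)$ and of $\A^1_u\times_S\A^1_S$, so Mayer--Vietoris reduces to the affine case. Your observation that the obstruction discussed in the introduction does not arise here, because the involution on $S$ is trivial, is exactly right. The paper's route is shorter once Theorem~\ref{thm:c2a1invariance} is in hand; your route is more elementary in that it avoids the d\'evissage machinery altogether and would make Lemma~\ref{A1t-t} logically independent of the rest of the paper.
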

	\begin{proof}
		Let $\pt: S \rightarrow \A^1_S $ be the rational point corresponding to $0$. Then, we have the following commutative diagram
		\[ \begin{tikzcd}[ampersand replacement=\&]
		S \ar[r,"\pt"] \ar[d,"\id"] \& \A^1_S \ar[dl, "p^*"] \\
		S     
		\end{tikzcd}\]
		which induces
		\[ \begin{tikzcd}[ampersand replacement=\&]
		GW^{[i]}(S) \ar[r,"p^*"] \ar[d,"\id"] \&  GW^{[i]}(\mathbb{A}_S^1,\sigma^-_{\mathbb{A}_S^1}) \ar[dl, "\pt^*"] \\
		GW^{[i]}(S)  \&\phantom{a}\hspace{2.2 cm}.   
		\end{tikzcd} \]
		This implies that the pullback $p^*:  GW^{[i]}(S) \longrightarrow  GW^{[i]}(\mathbb{A}_S^1,\sigma^-_{\mathbb{A}_S^1})$ is split injective. Now, consider the composition 
		\[  GW^{[i]}(\mathbb{A}_S^1,\sigma^-_{\mathbb{A}_S^1}) \stackrel{\pt^*}\longrightarrow  GW^{[i]}(S) \stackrel{p^*}\longrightarrow  GW^{[i]}(\mathbb{A}_S^1,\sigma^-_{\mathbb{A}_S^1}). \]
		We want to show that it is an isomorphism. We use the following $C_2$-equivariant homotopy
		\[ \begin{tikzcd}[ampersand replacement=\&]
		\A^1 \times \A^1 \ar[r] \ar[d] \& \A^1 \ar[d]  \&  (t', t) \ar[d, mapsto] \ar[r, mapsto] \& t' t \ar[d, mapsto]  \\
		\A^1 \times \A^1 \ar[r] \& \A^1 \& (t', -t)\ar[r, mapsto] \& -t' t.
		\end{tikzcd}\]
		This diagram tells us that 
		\[(\mathbb{A}_S^1,\sigma^-_{\mathbb{A}_S^1})  \stackrel{p}\longrightarrow (S,\id) \stackrel{\pt}\longrightarrow (\mathbb{A}_S^1,\sigma^-_{\mathbb{A}_S^1}) \] 
		is homotopic to $\id: (\mathbb{A}_S^1,\sigma^-_{\mathbb{A}_S^1})  \rightarrow (\mathbb{A}_S^1,\sigma^-_{\mathbb{A}_S^1}) $. The result follows by Theorem \ref{thm:c2a1invariance} below. \end{proof}
	Let $\mathbb{P}^0_{[1:1]} \hookrightarrow \mathbb{P}^1_S$ be the closed point cut out by the homogeneous ideal $(X-Y)$. Then, $\mathbb{P}^0_{[1:1]}$ is invariant under the involution $\tau_{\PS^1_S}$ on $\mathbb{P}^1_S$.  
	\begin{lem}\label{P1P0support} Let $S$ be a regular scheme with $\frac{1}{2} \in \SO_S$.\ The d\'{e}vissage theorem provides an isomorphism
		\[D_{\mathbb{P}^0/\mathbb{P}^1}:  GW^{[i+1]}(S) \stackrel{\cong}\longrightarrow GW^{[i]}_{\mathbb{P}^0_{[1:1]}} (\PS^1_S, \tau_{\PS^1_S}).   \]
	\end{lem}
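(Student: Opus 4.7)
The plan is to apply the d\'evissage theorem, Theorem \ref{thm:Devissage}, directly to the closed immersion $\pi : \PS^0_{[1:1]} \hookrightarrow \PS^1_S$ with trivial dualizing coefficient $(\mathcal{L},\sigma_{\mathcal{L}}) = (\SO_{\PS^1_S}, \tau^\sharp)$. The hypotheses are easy to verify: $\PS^0_{[1:1]}$ is cut out by the homogeneous ideal $(X-Y)$, which is a regular sequence of length one, so the embedding is regular of codimension $d=1$; and the point $[1:1]$ is fixed by $\tau_{\PS^1_S}$, so the subscheme is invariant.

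The core computation is to determine the involution on $\omega_{\PS^0/\PS^1} \otimes_{\SO_{\PS^1}} \SO_{\PS^1}$ restricted to $\PS^0_{[1:1]} \cong S$. In the affine chart $\Spec \SO_S[u]$ with $u = X/Y$, the point $[1:1]$ corresponds to the ideal $(u-1)$, and $\tau$ sends $u \mapsto u^{-1}$. Setting $v := u-1$, a direct expansion gives
\[ \tau(v) \;=\; \frac{1}{1+v} - 1 \;\equiv\; -v \pmod{v^2}. \]
Hence $\sigma$ acts as multiplication by $-1$ on the conormal module $J/J^2$, and dually on the normal bundle $N$. Since $N$ has rank one, $\det(\sigma_N) = -1$, so by the definition in Section $5$ the induced involution on $\omega_{\PS^0/\PS^1} \cong \SO_S$ is also $-1$. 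Combined with $\sigma_\mathcal{L}|_{\PS^0} = \id$, and with the fact that the involution on the fixed point $\PS^0_{[1:1]} \cong S$ is trivial, the twisted dualizing data transported to $S$ is $(\SO_S, -\id)$, i.e.\ the $-\can$-twisted duality.

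Applying Theorem \ref{thm:Devissage} then yields an equivalence of spectra
\[ D_{\PS^0/\PS^1,\, \SO_{\PS^1_S}} : GW^{[i-1]}(S, -\can) \stackrel{\simeq}{\longrightarrow} GW^{[i]}_{\PS^0_{[1:1]}}(\PS^1_S, \tau_{\PS^1_S}). \]
The proof is concluded by the standard two-fold shift $GW^{[j]}(S, -\can) \cong GW^{[j+2]}(S)$ (negating the symmetric form shifts the Grothendieck--Witt duality by two in Schlichting's framework), applied with $j = i-1$.

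The main obstacle is the careful sign bookkeeping in the normal bundle computation. This is precisely the sign on the double dual identification highlighted in the introduction and in Definition \ref{D}: it is invisible when the involution is trivial, but here it is exactly what converts the naive degree shift $[i-1]$ into the correct degree $[i+1]$, thereby explaining the discrepancy with the classical formula $W^i(\PS^1_S) \cong W^i(S) \oplus W^{i-1}(S)$ of \cite{Wal03, Ne09} for the trivial involution.
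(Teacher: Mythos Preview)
Your proposal is correct and follows essentially the same route as the paper: apply Theorem \ref{thm:Devissage} to the regular codimension-one embedding $\PS^0_{[1:1]}\hookrightarrow\PS^1_S$, compute that the induced involution on the conormal line is $-1$, identify the twisted duality on $S$ with $(\SO_S,-\id)$, and then use $GW^{[i-1]}(S,-\can)\cong GW^{[i+1]}(S)$. The only cosmetic difference is that the paper reads off the sign directly in homogeneous coordinates ($X-Y\mapsto Y-X$ on $I/I^2$), whereas you pass to the affine chart and Taylor-expand $u\mapsto u^{-1}$ at $u=1$; both yield the same $-1$.
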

	\begin{proof}
		 By Theorem \ref{thm:Devissage}, we conclude that \[ GW^{[i-1]}(\mathbb{P}^0_{[1:1]},\id, (\omega_{\mathbb{P}^0_{[1:1]}/\PS^1_S}, \sigma_{\omega}) ) \stackrel{\cong}\longrightarrow GW^{[i]}_{\mathbb{P}^0_{[1:1]}} (\PS^1_S, \tau_{\PS^1_S}).  \]
		 Let $I = (X-Y)$ be the homogeneous ideal defining $\mathbb{P}^0_{[1:1]} =S$  in $ \mathbb{P}^1_S$. Then, the involution $\tau_{\PS^1_S}: \PS^1_S \rightarrow \PS^1_S$ induces an involution on $I$ 
		  which sends $X-Y$ to $Y-X$, and therefore induces a map $\sigma_{I/I^2}: I/I^2 \rightarrow I/I^2 : X-Y \mapsto Y-X$ of $\SO_S$-module. The map $\mu: I/I^2 \rightarrow \SO_S$ defined by sending $X-Y$ to $1$ is an isomorphism of $\SO_S$-modules. Now, we have a commutative diagram
		 \[ \xymatrix{ I/I^2 \ar[r]^-{\mu} \ar[d]_{\sigma_{I/I^2}} & \SO_S \ar[d]^-{-\id} \\
		  I/I^2 \ar[r]^-{\mu} & \SO_S} \]
		which implies $(\omega_{\mathbb{P}^0_{[1:1]}/\PS^1_S}, \sigma_{\omega}) \cong (\SO_S, -\id)$ and hence 
		\[ GW^{[i-1]}(\mathbb{P}^0_{[1:1]},\id, (\omega_{\mathbb{P}^0_{[1:1]}/\PS^1_S}, \sigma_{\omega}) )  \stackrel{\cong}\longrightarrow GW^{[i-1]}(S,\id, (\SO_S, -\id) )  \stackrel{\cong}\longrightarrow  GW^{[i-1]}(S, - \can). \]
		 Also, $GW^{[i-1]}(S, - \can) \cong GW^{[i+1]}(S)$.
	\end{proof}
	
	\begin{proof}[Proof of Theorem \ref{P1switch}]
		By \cite{Sch17}, we have the following homotopy fibration sequence
		\[\begin{CD}
		GW^{[i]}_{\mathbb{P}^0_{[1:1]}} (\PS^1_S, \tau_{\PS^1_S})  \longrightarrow GW^{[i]} (\PS^1_S, \tau_{\PS^1_S}) \longrightarrow  GW^{[i]} (\PS^1_S - \mathbb{P}^0_{[1:1]} , \tau_{\PS^1_S - \mathbb{P}^0_{[1:1]}}). 
		\end{CD} \]
		Note that there is an invariant isomorphism 
		\[(\mathbb{A}_S^1,\sigma^-_{\mathbb{A}_S^1})  \rightarrow (\PS^1_S - \mathbb{P}^0_{[1:1]} , \tau_{\PS^1_S - \mathbb{P}^0_{[1:1]}}) \] 
		defined by sending $t$ to $[t+\frac{1}{2}: t-\frac{1}{2}]$. Therefore, we get a homotopy fibration sequence
		\[
		\begin{tikzcd}
		GW^{[i]}_{\mathbb{P}^0_{[1:1]}} (\PS^1_S, \tau_{\PS^1_S})\ar[r] & GW^{[i]} (\PS^1_S, \tau_{\PS^1_S})  \ar[r] &  GW^{[i]} (\mathbb{A}_S^1,\sigma^-_{\mathbb{A}_S^1})& \\
		&&GW^{[i]}(S) \ar[u,"\cong" ,"p^*"'] \ar[ul, "q^*"]&\phantom{a}\hspace{-1.8 cm}.
		\end{tikzcd} \]
		By Lemma \ref{A1t-t}, we have the isomorphism $p^*: GW^{[i]}(S) \rightarrow GW^{[i]} (\mathbb{A}_S^1,\sigma^-_{\mathbb{A}_S^1}) $ which provides a splitting for the above homotopy fibration sequence. By combining it with Lemma \ref{P1P0support}, we conclude that
		\[GW^{[i]} (\PS^1_S, \tau_{\PS^1_S})   \cong GW^{[i]}_{\mathbb{P}^0_{[1:1]}} (\PS^1_S, \tau_{\PS^1_S}) \oplus  GW^{[i]}(S) \cong      GW^{[i+1]}(S)\oplus  GW^{[i]}(S)\ .\]
		The result now follows. \end{proof}
	
	\subsection{$C_2$-equivariant $\mathbb{A}^1$-invariance}
	Consider the involution $\sigma_{\mathbb{P}^1_S}:\mathbb{P}^1_S \rightarrow \mathbb{P}^1_S$ given by
	the graded morphism $\SO_S[X,Y] \rightarrow \SO_S[X,Y]$ of graded sheaves of $\SO_S$-algebras, 
	such that $a\mapsto \sigma_S(a)$ if $a \in \SO_S$, and $X \mapsto X, Y\mapsto Y$. There is an element
	\[\beta := 
	\arraycolsep=1.7pt\def\arraystretch{1.7}
	\left( \begin{CD}
	\SO_{\PS^1}(-1) @>X>> \SO_{\PS^1} \\
	@VYVV  @VYVV \\
	\SO_{\PS^1} @>X>> \SO_{\PS^1}(1)  
	\end{CD}\right) \in GW^{[1]}_0(\mathbb{P}^1_S,\sigma_{\mathbb{P}^1_S})\ .  \]
	\begin{Prop}\label{P1} In this result, the base $S$ can be singular, and still $\frac{1}{2} \in \SO_S$. The map of spectra
		\[   (q^*, \beta \cup q^*(-))   : GW^{[i]}(S,\sigma_S) \oplus GW^{[i-1]}(S,\sigma_S) \rightarrow      GW^{[i]} (\mathbb{P}^1_S,\sigma_{\mathbb{P}^1_S}) \]
	 is an equivalence.
	\end{Prop}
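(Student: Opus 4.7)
The plan is to adapt Schlichting's projective bundle theorem for $\PS^1_S$ in Hermitian $K$-theory (cf.\ \cite{Sch17} and Walter \cite{Wal03}) while carrying the $C_2$-equivariant structure through the whole argument. The key observation is that $\sigma_{\PS^1_S}$ is, by definition, the identity on the homogeneous coordinates $X, Y$; consequently it commutes with every Serre twist $\SO(n)$, with multiplication by $X$ and $Y$, with the projection $q\colon \PS^1_S\to S$ and its pullback $q^*$, with $q_*$, with the relative dualizing sheaf $\omega_{\PS^1_S/S}\cong\SO(-2)$, and with the canonical Koszul complex
\[\SO(-1) \xrightarrow{(X,Y)^T} \SO\oplus \SO \xrightarrow{(Y,-X)} \SO(1)\]
from which $\beta$ is built. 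Because of this, no construction in the classical projective bundle theorem interacts non-trivially with $\sigma_{\PS^1_S}$, so no extra signs from the involution ever appear. This is what permits $S$ to be singular: the argument never uses regularity of the base, only compatibility with $\sigma_S$.

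First I would set up a $\sigma$-equivariant semi-orthogonal decomposition
\[\D^b(\PS^1_S) = \langle q^*\D^b(S),\; q^*\D^b(S)\otimes\SO(-1)\rangle\]
coming from Beilinson's resolution of the diagonal. Since the generators $\SO$ and $\SO(-1)$ are each preserved by $\sigma_{\PS^1_S}$, and $q^*$ intertwines $\sigma_S$ with $\sigma_{\PS^1_S}$, this decomposition lifts to the dg-categorical level, i.e.\ to a decomposition of $\Ch^b(\PS^1_S)$ together with its duality $\#^{\SO}_{\sigma_{\PS^1_S}}$. Under this duality the two pieces are exchanged via the relative duality coming from $\omega_{\PS^1_S/S}$; that exchange is precisely what is encoded by the symmetric complex $\beta$ in shifted Grothendieck-Witt.

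Next I would turn this decomposition into a Karoubi-type short exact sequence of dg categories with weak equivalences and duality (in the sense of \cite[\S 6]{Sch17}) and pass to $\GW$-spectra to obtain a fiber sequence. Following the equivariant version of the argument in \cite{Sch17}, this fiber sequence splits: the splitting of the first factor is $q^*$, and the splitting of the second factor is $\beta\cup q^*(-)$, because $\beta$ represents the generator of the shifted summand as a symmetric complex built directly from the Koszul resolution above. Putting the two sections together yields the claimed equivalence
\[(q^*,\, \beta\cup q^*(-))\colon GW^{[i]}(S,\sigma_S)\oplus GW^{[i-1]}(S,\sigma_S) \xrightarrow{\;\simeq\;} GW^{[i]}(\PS^1_S,\sigma_{\PS^1_S}).\]

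The main obstacle is constructing the splitting at the dg-categorical level rather than just on triangulated categories: one needs an explicit duality-preserving section on the summand generated by $\SO(-1)$, which is exactly the role of $\beta$, and a careful sign analysis for the double dual identification (cf.\ Definition~\ref{D}) must be carried out in the Koszul differential. However, because $\sigma_{\PS^1_S}$ is trivial on $X, Y$, the involution contributes no new sign and the verification reduces, factor by factor on the base, to the non-equivariant projective bundle formula of Schlichting and Walter.
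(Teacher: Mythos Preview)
Your proposal is correct and takes essentially the same approach as the paper: the paper's proof is the one-line observation that \cite[Theorem~9.10]{Sch17} applies without modification, which is precisely your point that $\sigma_{\PS^1_S}$ fixes the homogeneous coordinates and therefore interacts trivially with every ingredient of Schlichting's projective bundle argument. Your write-up merely unpacks the internal structure of that argument (semi-orthogonal decomposition, additivity/fiber sequence, splitting via $\beta$), but the underlying strategy is identical.
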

	\begin{proof}
		The proof of \cite[Theorem 9.10]{Sch17} can be applied without modification. 
	\end{proof}
	\begin{theo}\label{thm:c2a1invariance} Let $S$ be a regular scheme with involution $\sigma_S$ and with $\frac{1}{2} \in \SO_S$. Let $\sigma_{\A^1_S}: \A^1_S \rightarrow \A^1_S$ be the involution on $\A^1_S$ with the indeterminant fixed by $\sigma_{\A^1_S}$ and such that the following diagram commutes
		\[ \begin{CD}
		\A^1_S @>\sigma_{\A^1_S}>> \A^1_S\\
		@VpVV @VpVV \\
		S        @>\sigma_{S}>> S \ .
		\end{CD}\] 
		Then, the pullback 
		\[ p^*:   GW^{[i]} (S,\sigma_S) \rightarrow GW^{[i]} (\A^1_S , \sigma_{\A^1_S}) \]
		is an isomorphism.
	\end{theo}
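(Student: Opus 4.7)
The plan is to combine the projective bundle theorem (Proposition \ref{P1}) with the d\'evissage theorem (Theorem \ref{thm:Devissage}) by means of a localization sequence on $\PS^1_S$. Concretely, consider the closed immersion $i_\infty: S \hookrightarrow \PS^1_S$ at $\infty = [1:0]$, cut out by the homogeneous ideal $(Y)$. Since the involution $\sigma_{\PS^1_S}$ fixes the coordinates $X, Y$ and restricts to $\sigma_S$ on the base, the point $\infty$ is $\sigma_{\PS^1_S}$-invariant and the ideal $(Y)$ is sent to itself, with the generator $Y$ being fixed. The open complement is $(\A^1_S, \sigma_{\A^1_S})$, so I expect a homotopy fibration sequence of equivariant Hermitian $K$-theory spectra
\[ GW^{[i]}_\infty(\PS^1_S, \sigma_{\PS^1_S}) \longrightarrow GW^{[i]}(\PS^1_S, \sigma_{\PS^1_S}) \longrightarrow GW^{[i]}(\A^1_S, \sigma_{\A^1_S}). \]

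Next, I apply Theorem \ref{thm:Devissage} to the codimension-one regular embedding $i_\infty$. The conormal sheaf is $(Y)/(Y^2) \cong \SO_S$ with $\sigma_{\PS^1_S}$ acting as the identity on the generator $Y$, so the duality twist $(\omega_{\infty/\PS^1_S}, \sigma_\omega)$ is canonically identified with $(\SO_S, \id)$. This yields
\[ D_{\infty/\PS^1}: GW^{[i-1]}(S, \sigma_S) \xrightarrow{\ \cong\ } GW^{[i]}_\infty(\PS^1_S, \sigma_{\PS^1_S}). \]
Combining with Proposition \ref{P1}, the above fibration sequence becomes
\[ GW^{[i-1]}(S, \sigma_S) \longrightarrow GW^{[i]}(S, \sigma_S) \oplus GW^{[i-1]}(S, \sigma_S) \longrightarrow GW^{[i]}(\A^1_S, \sigma_{\A^1_S}). \]

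The key step is to identify the composition $j_* \circ D_{\infty/\PS^1}: GW^{[i-1]}(S, \sigma_S) \to GW^{[i]}(\PS^1_S, \sigma_{\PS^1_S})$, where $j_*$ is extension of support, with the second summand $\beta \cup q^*(-)$ up to an automorphism of $GW^{[i-1]}(S, \sigma_S)$. One does this by unwinding the d\'evissage construction: $D_{\infty/\PS^1}$ is built from the Koszul resolution of $\SO_\infty$ along the regular section $Y \in \SO_{\PS^1_S}(1)$, and the explicit form of $\beta$ in Proposition \ref{P1} is precisely (a twist of) this Koszul complex equipped with its natural symmetric structure. The required identification then proceeds exactly as in the trivial-involution case treated in Schlichting's \cite[\S 9]{Sch17}, since the involution on both sides merely restricts the $\SO_S[X,Y]$-action through $\sigma_S$ without modifying the Koszul differentials or the symmetric pairing. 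Granting this, the first map of the fibration sequence splits off the second summand, and the first summand maps isomorphically to $GW^{[i]}(\A^1_S, \sigma_{\A^1_S})$ via $p^*$.

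The main obstacle is not the projective bundle or d\'evissage input, both of which are supplied by earlier results, but rather verifying two ingredients: first, that the localization (Nisnevich-type) fibration sequence for $\infty \subset \PS^1_S$ is available in the equivariant framework (this should follow from the scheme-with-involution variant of Nisnevich excision mentioned in the introduction and from the d\'evissage identification of $GW_Z$ with the coherent theory on $Z$); and second, the explicit comparison between the d\'evissage boundary $D_{\infty/\PS^1}$ and the Bott-type class $\beta$. I expect the latter comparison to be the only calculation requiring care, and to reduce to a symmetric-complex isomorphism between the push-forward of the trivial form on $\SO_S$ (via $i_{\infty*}$) and the canonical form on the Koszul complex representing $\beta$, analogous to the standard argument for trivial involution.
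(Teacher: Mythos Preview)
Your proposal is correct and follows essentially the same route as the paper: localization sequence for an invariant rational point on $\PS^1_S$, d\'evissage for the closed fibre, Proposition~\ref{P1} for the middle term, and identification of the d\'evissage map with the $\beta$-summand. The only cosmetic difference is that the paper uses the point $[0:1]$ (cut out by $X$) rather than your $\infty=[1:0]$ (cut out by $Y$); since $\beta$ is built on the Koszul complex $\SO(-1)\xrightarrow{X}\SO$, the paper's choice makes the comparison you flag as ``the only calculation requiring care'' immediate, whereas with your choice you would either invoke the $X\leftrightarrow Y$ symmetry of $(\PS^1_S,\sigma_{\PS^1_S})$ or replace $\beta$ by its swapped analogue.
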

	\begin{proof}
		Let $\mathrm{pt} :S \rightarrow  \mathbb{P}^1_S$ be the rational point with $X = 0$ and $Y=1$. It follows that there is a commutative diagram
		\[ \begin{CD}
		S @>\sigma_S>> S \\
		@V{\pt}VV @V{\pt}VV \\
		\mathbb{P}^1_S @>\sigma_{\mathbb{P}^1_S}>> \mathbb{P}^1_S\ .
		\end{CD}\]
		By Schlichting \cite[Theorem 6.6]{Sch17}, we have the following localization sequence
		\[ GW^{[i]}_{\pt} (\mathbb{P}^1_S,\sigma_{\mathbb{P}^1_S}) \longrightarrow GW^{[i]} (\mathbb{P}^1_S,\sigma_{\mathbb{P}^1_S}) \longrightarrow GW^{[i]} (\mathbb{A}^1_S,\sigma_{\mathbb{A}^1_S})\]
		We write out the following commutative diagram
		\begin{equation}\label{P1A1}
		\begin{tikzcd}[ampersand replacement=\&]
		GW^{[i-1]} (S,\sigma_{S}) \ar[r, "{\begin{pmatrix} 0  \\ 1 \end{pmatrix}}"] \ar[dd, "D_{\pt/\mathbb{P}^1_S}"']
		\& GW^{[i]} (S,\sigma_{S}) \oplus GW^{[i-1]} (S,\sigma_{S}) \ar[r, "{\begin{pmatrix} 1 & 0  \end{pmatrix}}"] \ar[dd, "{\begin{pmatrix} q^* & \beta \cup q^*(-)  \end{pmatrix}}"'] 
		\&  GW^{[i]} (S,\sigma_{S}) \ar[dd, "p^*" '] \\ \\
		GW^{[i]}_{\pt} (\mathbb{P}^1_S,\sigma_{\mathbb{P}^1_S})  \ar[r] 
		\& GW^{[i]} (\mathbb{P}^1_S,\sigma_{\mathbb{P}^1_S})  \ar[r] \&  GW^{[i]} (\mathbb{A}^1_S,\sigma_{\mathbb{A}^1_S})\ .
		\end{tikzcd}
		\end{equation}
		The main reason for the commutativity of the left square is that we have the locally free resolution
		\[ 0 \longrightarrow  \SO_{\mathbb{P}^1}(-1) \stackrel{X}\longrightarrow  \SO_{\mathbb{P}^1} \longrightarrow \SO_{\pt} \longrightarrow 0.\]
		
		By d\'{e}vissage, we conclude that  $D_{\pt/\mathbb{P}^1_S}$ is an equivalence. By Proposition \ref{P1}, we know that the middle map ${\begin{pmatrix} q^* & \beta \cup q^*(-)  \end{pmatrix}}$
		 is an equivalence. It follows that the right arrow
		\[ p^* :  GW^{[i]} (S,\sigma_{S})  \rightarrow GW^{[i]} (\mathbb{A}^1_S,\sigma_{\mathbb{A}^1_S}) \]
		 is also an equivalence. 
	\end{proof}
	\subsection{Representability}
	It is easy to see that schemes with involution can be identified with schemes with a $C_2$-action where $C_2$ is the cyclic group of order two considered as an algebraic group. Let $\mathcal{H}^{C_2}_\bullet(S)$ be the $C_2$-equivariant motivic homotopy category of Heller, Krishna and \O stv\ae r \cite{HKO14}.\  Consider the presheaf of simplicial sets $GW^{[i]}: \Sm^\op_S \rightarrow \textnormal{sSet}$ by a big vector bundle argument cf. \cite[Remark 9.2]{Sch17}. The presheaf $GW^{[i]}$ is an object in $\mathcal{H}^{C_2}_\bullet(S)$. We prove that the following representability result.
\begin{theo}\label{thm:C2representablity} Let $(X,\sigma) \in \textnormal{Sm}^{C_2}_S$. Then, there is a bijection of sets
	\[[S^n \wedge (X,\sigma)_+, GW^{[i]}]_{\mathcal{H}^{C_2}_\bullet(S)} = GW_n^{[i]}(X,\sigma). \]
\end{theo}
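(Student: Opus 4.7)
The plan is to show that the presheaf $GW^{[i]}$, when evaluated on $\textnormal{Sm}^{C_2}_S$, is already motivic fibrant in the sense of Heller-Krishna-\O stv\ae r, and then read off the represented functor. Recall that the motivic model structure on simplicial presheaves on $\textnormal{Sm}^{C_2}_S$ is obtained by Bousfield localizing the projective (objectwise) model structure at the $C_2$-equivariant Nisnevich hypercovers and at the $\A^1$-projections $(X,\sigma) \times \A^1 \to (X,\sigma)$ (with $\A^1$ carrying the trivial involution on the indeterminate). A presheaf is motivic fibrant precisely when it is objectwise Kan, satisfies $C_2$-Nisnevich descent, and is $C_2$-equivariantly $\A^1$-invariant.

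First, I would note that objectwise Kan fibrancy of the presheaf $GW^{[i]}$ follows from Schlichting's construction as in \cite[Remark 9.2]{Sch17}: the big-vector-bundle model produces a presheaf of Kan complexes. Next, for $C_2$-Nisnevich descent, I would verify the Mayer-Vietoris property for $C_2$-equivariant Nisnevich distinguished squares. The key input is the variant of Nisnevich excision for Hermitian $K$-theory of schemes with involution alluded to in the introduction, adapted from \cite[Theorem 9.6]{Sch17}. Concretely, given a $C_2$-equivariant distinguished square in the Heller-Krishna-\O stv\ae r sense, one shows that the corresponding square of spectra $GW^{[i]}$ is homotopy cartesian; this is the standard translation from localization sequences in the Hermitian $K$-theory of categories of coherent sheaves with support to Nisnevich descent.

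The $\A^1$-invariance condition is Theorem \ref{thm:c2a1invariance}, which applies precisely because we assumed $S$ regular and $\frac{1}{2} \in \SO_S$. Together with the descent step, this shows that the canonical map from $GW^{[i]}$ to its motivic fibrant replacement is a pointwise weak equivalence. Hence, for any $(X,\sigma) \in \textnormal{Sm}^{C_2}_S$, the mapping space in $\mathcal{H}^{C_2}_\bullet(S)$ is computed as
\[ \mathrm{Map}_{\mathcal{H}^{C_2}_\bullet(S)}\bigl( (X,\sigma)_+,\, GW^{[i]} \bigr) \;\simeq\; GW^{[i]}(X,\sigma), \]
and smashing with $S^n$ gives the desired bijection $[S^n \wedge (X,\sigma)_+, GW^{[i]}]_{\mathcal{H}^{C_2}_\bullet(S)} = \pi_n GW^{[i]}(X,\sigma) = GW_n^{[i]}(X,\sigma)$.

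The main obstacle is the descent step: one must confirm that the $C_2$-equivariant Nisnevich topology of Heller-Krishna-\O stv\ae r is generated by distinguished squares of the form covered by an equivariant Nisnevich excision theorem for $GW^{[i]}$, and that the invariance hypotheses on supports match those needed for the localization sequence to hold. This is delicate because a $C_2$-Nisnevich square may involve points whose orbits are of size two, so one needs to combine the ordinary Nisnevich excision on the complement of the fixed locus with its fixed-point version; the additivity-type argument used in the proof of Theorem \ref{DevisW} for the free orbit contribution is the right template. The $\A^1$-invariance and objectwise Kan steps are then immediate from earlier results in the paper.
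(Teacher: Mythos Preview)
Your proposal is correct and follows essentially the same approach as the paper: the paper invokes \cite[Corollary 4.9]{HKO14} to reduce representability to checking $C_2$-equivariant Nisnevich excision (handled by the involution-adapted variant of \cite[Theorem 9.6]{Sch17}) and $C_2$-equivariant $\A^1$-invariance (Theorem \ref{thm:c2a1invariance}), which is exactly your strategy of verifying motivic fibrancy. The paper treats the descent step more summarily than you do---it simply notes that Schlichting's argument is independent of the duality---so your discussion of free orbits and the additivity template is extra caution rather than a genuinely different route.
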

	\begin{proof}
		By \cite[Corollary 4.9]{HKO14}, we need to prove the $C_2$-equivariant Nisnevich excision and $\A^1$-invariance for Hermitian $K$-theory. Note that the $C_2$-equivariant Nisnevich excision for $GW^{[i]}$ can be proved by a variant of \cite[Theorem 9.6]{Sch17} adapted to schemes with involution since the argument is independent of the duality. Moreover,  $\A^1$-invariance is proved in Theorem \ref{thm:c2a1invariance}.
	\end{proof}
	 \textbf{Acknowledgements.}
	I want to thank Marco Schlichting for useful discussion.\ I appreciate the referee for pointing out a gap in an earlier version.\ I thank Thomas Hudson for the proofreading of the paper. I would like to acknowledge support from the EPSRC Grant EP/M001113/1, the DFG priority programme 1786 and the GRK2240. Part of this work was carried out when I was visiting Hausdorff Research Institute for Mathematics in Bonn and Max-Planck-Institut in Bonn. I would like to express my gratitude for their hospitality.
	
	

\end{document}